\newtheorem{thm}{Theorem}
\newtheorem{lem}{Lemma}[section]
\newtheorem{prop}[lem]{Proposition}
\newtheorem{cor}[lem]{Corollary}
\DeclarePairedDelimiter\ceil{\lceil}{\rceil}
\DeclarePairedDelimiter\floor{\lfloor}{\rfloor}
\DeclareMathOperator{\Tr}{Tr}
\newcommand{\C}{{\bf{C}}}
\newcommand{\Q}{{\bf{Q}}}
\newcommand{\K}{{\bf{K}}}
\newcommand{\Z}{{\bf{Z}}}
\newcommand{\N}{{\bf{N}}}
\newcommand{\SL}{{\bf{SL}}}
\newcommand{\GL}{{\bf{GL}}}
\newcommand{\PSL}{{\bf{PSL}}}
\begin{document}
 
\title[Size of discriminants]{Size of discriminants of periodic geodesics of the modular surface}
\author{Fran\c cois Maucourant}
\address{Universit\'e Rennes, IRMAR UMR 6625, Campus de Beaulieu 35042 Rennes cedex -  France}
\email{francois.maucourant@univ-rennes.fr}
\thanks{Univ Rennes, CNRS, IRMAR - UMR 6625, F-35000 Rennes, France}

\subjclass{37A44,11K65}

\maketitle
\begin{abstract}
Pick a random matrix $\gamma$ in $\Gamma=\SL(2,\Z)$. Denote by $\mathcal{O}_\K$ the Dedekind ring generated by its eigenvalues, and let $\Delta_\K$, $\Delta_\gamma$ and $\Delta=\Tr(\gamma)^2-4$ be the respective discriminant of the rings $\mathcal{O}_\K$, the multiplier ring $M(2,\Z)\cap \Q[\gamma]$ and $\Z[\gamma]$. We show that their ratios admit probability limit distributions. In particular, 42\% of the elements of $\Gamma$ have a fundamental discriminant, and $\Z[\gamma]$ is a ring of integers with probability 32\%.
\end{abstract}

\section{Introduction} 

\subsection{Random geodesics}

 Let $\Gamma=\SL(2,\Z)$. To a periodic oriented geodesic on the modular surface $\Gamma\backslash \mathbb{H}^2$ are associated several arithmetic quantities. The purpose of this article is to give an idea of their typical size. To pick a periodic geodesic at random, we will take a representative $\gamma \in \Gamma$ of a conjugacy class $[\pm \gamma]$ in $\PSL(2,Z)$, as follows. For a large parameter $T>0$, consider the ball of radius $T$ in $\Gamma$:
 $$\Gamma_T=\{ \gamma \in {\bf SL}(2,\Z) \, : \, \|\gamma\|\leq T\},$$
where $\|.\|$ is the Frobenius norm $\|\gamma\|=\sqrt{\Tr( ^t \gamma \gamma)}$. A classical result is the asymptotic $|\Gamma_T|\sim 6T^2$. Denote by $\mathbb{P}_T$ the uniform probability measure on $\Gamma_T$. Pick a random matrix $\gamma$ using $\mathbb{P}_T$. \\

 We recall that $\gamma$ is likely to be {\em hyperbolic}, more precisely for all $\eta>0$,
\begin{equation} \label{proba_hyperbolique}
 \mathbb{P}_T( |\Tr(\gamma)|> 2)= 1-O(T^{-1+\eta}).
\end{equation}
It is also known that the trace $\Tr(\gamma)$ has roughly size $T$:
\begin{prop} \label{distrib_trace}
For all $\alpha<\beta$,
\begin{equation}\label{distrib_trace2}
\lim_{T\to +\infty} \mathbb{P}_T\left(\frac{\Tr(\gamma)}{T}\in (\alpha,\beta) \right)= \frac{2}{\pi} \int_{\sup(\alpha,-1)}^{\inf(\beta,1)} \sqrt{1-x^2}\, \mathrm{d}x.
\end{equation}
\end{prop}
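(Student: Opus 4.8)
The plan is to rewrite the probability in \eqref{distrib_trace2} as a ratio of lattice point counts on $G=\SL(2,\R)$, to replace each count by a Haar volume using equidistribution of $\Gamma$-points in expanding regions, and then to compute the volume ratio in Cartan coordinates, where the semicircle law emerges as the distribution of one Cartesian coordinate of a point chosen uniformly in the unit disc.

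First I would set, for fixed $\alpha<\beta$,
\[ B_T=\{g\in G:\|g\|\le T\},\qquad \Omega_T=\{g\in G:\|g\|\le T,\ \Tr(g)\in(\alpha T,\beta T)\}, \]
so that $\mathbb{P}_T(\Tr(\gamma)/T\in(\alpha,\beta))=|\Gamma\cap\Omega_T|/|\Gamma\cap B_T|$. Here $\Tr(g)=\langle g,I\rangle$ is a linear functional for the Frobenius inner product, so $\Omega_T$ is the intersection of the norm ball $B_T$ with the slab between two parallel hyperplanes; moreover $|\Tr(g)|\le\sqrt{\|g\|^2+2}$ on $G$, so for $\alpha,\beta\in(-1,1)$ these hyperplanes meet the sphere $\partial B_T$ transversally, with angle bounded below uniformly in $T$ (if $(\alpha,\beta)\cap(-1,1)=\emptyset$ both sides of \eqref{distrib_trace2} are eventually $0$, so we may assume otherwise). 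It follows that $(B_T)_T$ and $(\Omega_T)_T$ are well-rounded families of expanding sets, and the equidistribution of $\Gamma$-points in such families (Duke--Rudnick--Sarnak, Eskin--McMullen; in the present $\SL_2$ case also a consequence of the spectral theory of $\Gamma\backslash\mathbb{H}^2$) gives $|\Gamma\cap\Omega_T|\sim\mathrm{vol}(\Omega_T)/\mathrm{vol}(\Gamma\backslash G)$ and $|\Gamma\cap B_T|\sim\mathrm{vol}(B_T)/\mathrm{vol}(\Gamma\backslash G)$ for a fixed Haar measure $\mathrm{vol}$; the quoted asymptotic $|\Gamma\cap B_T|\sim 6T^2$ fixes the normalisation and is a useful consistency check. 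Hence the probability in \eqref{distrib_trace2} is asymptotic to $\mathrm{vol}(\Omega_T)/\mathrm{vol}(B_T)$.

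To evaluate this ratio I would use the Cartan decomposition $g=k_1 a_t k_2$ with $a_t=\mathrm{diag}(e^{t/2},e^{-t/2})$, $t\ge0$, $k_1,k_2\in\mathrm{SO}(2)$, for which $\mathrm{vol}$ is proportional to $\sinh(t)\,\mathrm{d}k_1\,\mathrm{d}t\,\mathrm{d}k_2$, while $\|g\|^2=2\cosh t$ and, by cyclicity of the trace, $\Tr(g)=\Tr(a_t k_2 k_1)=2\cosh(t/2)\cos\theta$, where $\theta$ is the rotation angle of $k_2k_1$. Changing variables from $(k_1,k_2)$ to $(k_1,k_2k_1)$ and integrating out $k_1$, then substituting $u=\cosh(t/2)$ (so that $\sinh(t)\,\mathrm{d}t=4u\,\mathrm{d}u$ and $\|g\|\le T$ reads $u\le\tfrac12\sqrt{T^2+2}$), and finally rescaling $v=2u/T$, one checks that $\mathrm{vol}$ restricted to $B_T$ becomes, up to a factor $1+o(1)$ which cancels in the ratio, the measure $\tfrac1\pi\,v\,\mathrm{d}v\,\mathrm{d}\theta$ on the unit disc $\{v\le 1\}$ in polar coordinates, under which $\Tr(g)/T$ is precisely the Cartesian coordinate $v\cos\theta$. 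Since the first coordinate of a point distributed uniformly in the unit disc has density $\tfrac2\pi\sqrt{1-x^2}$ on $(-1,1)$, we obtain $\mathrm{vol}(\Omega_T)/\mathrm{vol}(B_T)\to\tfrac2\pi\int_{\sup(\alpha,-1)}^{\inf(\beta,1)}\sqrt{1-x^2}\,\mathrm{d}x$, which is \eqref{distrib_trace2}; the degenerate cases $\alpha=-1$ or $\beta=1$ follow from the others by monotonicity in the endpoints, as the limiting law has no atoms.

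The step I expect to be the main obstacle is the equidistribution/error-term input, i.e.\ verifying that the sharp-cutoff families $\Omega_T$ are well-rounded: one needs the $\varepsilon$-neighbourhoods of $\partial\Omega_T$ to have volume $O(\varepsilon)\,\mathrm{vol}(\Omega_T)$ uniformly for small $\varepsilon$, the delicate points being the ``corners'' $\{\Tr(g)=\beta T\}\cap\partial B_T$ and the near-tangency of the slab to the sphere as $\beta\uparrow1$. An alternative that sidesteps the sharp cutoffs is to squeeze $\mathbf 1_{(\alpha,\beta)}$ between smooth compactly supported functions and apply equidistribution only with fixed smooth weights, at the cost of one extra approximation argument. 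Everything else --- the Cartan-coordinate bookkeeping and the identification with the semicircle law --- is routine.
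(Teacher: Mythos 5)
Your argument is correct, and it reaches the semicircle law by a genuinely different route from the paper. The paper does not count lattice points in well-rounded sets itself: it invokes a ready-made limit theorem (Theorem 2 of the cited reference \cite{MR2286635}) asserting that the rescaled points $\gamma/T$, $\gamma\in\Gamma_T$, equidistribute towards an explicit measure carried by the rank-one matrices of Frobenius norm at most $1$, parametrized as $r\,{}^t(\sin\varphi,\cos\varphi)(\sin\theta,\cos\theta)$ with density proportional to $r\,\mathrm{d}r\,\mathrm{d}\theta\,\mathrm{d}\varphi$; such a matrix has trace $r\sin(\theta+\varphi)$, and the change of variables $(r,\theta,\varphi)\mapsto(x,y,\theta)=(r\sin(\theta+\varphi),r\cos(\theta+\varphi),\theta)$ produces the uniform measure on the disc and hence the density $\tfrac2\pi\sqrt{1-x^2}$. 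Your Cartan-coordinate computation is in effect a proof of that input specialized to the trace observable: for $g=k_1a_tk_2$ with $t$ large, $g/\|g\|$ tends to the rank-one matrix $k_1\,\mathrm{diag}(1,0)\,k_2$, your radial variable $v$ plays the role of $r$, and your $v\cos\theta$ is the paper's $r\sin(\theta+\varphi)$ after the same reduction to the single angle of $k_2k_1$. The technical point you single out --- sharp cutoffs versus continuous test functions --- is present in the paper as well and is resolved there exactly as in your alternative, by sandwiching $\chi_{\alpha,\beta}$ between continuous functions whose integrals against the limit measure are close (the limit law charges no hyperplane $\Tr=c$). So your version costs the verification of well-roundedness of $\Omega_T$, or the smoothing detour, which the paper's citation hides; in exchange it is self-contained modulo Duke--Rudnick--Sarnak/Eskin--McMullen and makes the volume computation explicit. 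Two very minor points: when $(\alpha,\beta)\cap(-1,1)=\emptyset$ the left-hand side of (\ref{distrib_trace2}) is not \emph{eventually zero} (e.g.\ for $(\alpha,\beta)=(1,2)$ there may be matrices with $T<\Tr(\gamma)\le\sqrt{T^2+2}$), but it does tend to zero, e.g.\ by Lemma \ref{bornetrace} or by the tightness argument you already use for the endpoints; and recall that the Cartan map $K\times A^+\times K\to G$ is generically two-to-one via $(k_1,a,k_2)\mapsto(k_1m,a,mk_2)$, $m=-I_2$, which only affects the constant that cancels in your ratio.
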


Assuming that $\gamma$ is hyperbolic, its eigenvalues $\epsilon_\gamma,\epsilon_\gamma^{-1}$, where $|\epsilon_\gamma|>1$, are units of the ring of integers $\mathcal{O}_\K$ of the real quadratic field $\K=\Q(\sqrt{\Tr(\gamma)^2-4})$. The unit $\epsilon_\gamma$ is close to $\Tr(\gamma)$, and $\epsilon_\gamma/T$ has the same asymptotic distribution (\ref{distrib_trace2}) as $\Tr(\gamma)/T$. In terms of the length $\ell_\gamma=2\log(|\epsilon_\gamma|)$ of the periodic geodesic associated to the conjugacy class $[\pm \gamma]\subset \PSL(2,\Z)$, it is just a bit smaller than $2\log(T)$: for all $\beta\geq 0$, 
\begin{equation}\label{distrib_length}
\lim_{T\to +\infty} \mathbb{P}_T\left( 2\log(T)-\beta \leq \ell_\gamma \leq 2\log(T) \right)= \frac{2}{\pi} \int_{0}^{\beta} \sqrt{e^{-z}-e^{-2z}}\, \mathrm{d}z.
\end{equation}

Our first result is that the eigenvalue $|\epsilon_\gamma|$ is likely to be the fundamental unit $\epsilon_\K$ of $\mathcal{O}_\K$, so $\ell_\gamma$ is twice the regulator $R_\K=\log \epsilon_\K$ of the quadratic field $\K$. In particular, the regulator is essentially $\log(T)+O(1)$.
\begin{thm} \label{proba_fondamental}
For all $\eta>0$,
\begin{equation} 
 \mathbb{P}_T( |\Tr(\gamma)|> 2, \, |\epsilon_\gamma|=\epsilon_\K)= 1-O(T^{-\frac12+\eta}).  
\end{equation}
 In particular, the ring $\mathcal{O}_\K$ is unlikely to have an unit of negative norm.
\end{thm}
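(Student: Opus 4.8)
The plan is to estimate the ``bad'' set $B_T=\{\gamma\in\Gamma_T:\ |\Tr(\gamma)|>2,\ |\epsilon_\gamma|\neq\epsilon_\K\}$ by an elementary count of lattice points, the key being that lying in $B_T$ forces $\Tr(\gamma)$ into a very sparse set of integers. I would begin with the arithmetic dictionary. Since $\epsilon_\gamma$ is an algebraic integer of $\K$ with $\epsilon_\gamma\epsilon_\gamma^{-1}=1$ it is a unit, so $\epsilon_\gamma=\pm\epsilon_\K^{\,m}$ for a unique $m\geq1$ once $\epsilon_\K>1$ is fixed; moreover $N_{\K/\Q}(\epsilon_\gamma)=\det(\gamma)=1$, which forces $m$ even when $N_{\K/\Q}(\epsilon_\K)=-1$. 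Hence $\gamma\in B_T$ precisely when $m\geq2$, and then $|\Tr(\gamma)|=\epsilon_\K^{\,m}+\epsilon_\K^{-m}$; combined with $(\Tr\gamma)^2\leq2\|\gamma\|^2$ this gives $\epsilon_\K\leq|\epsilon_\gamma|^{1/2}\leq|\Tr(\gamma)|^{1/2}\leq 2^{1/4}T^{1/2}$. I would then introduce the set $\mathcal{T}_T$ of integers $t$ with $2<|t|\leq\sqrt2\,T$ admitting a representation $|t|=\epsilon_\K^{\,m}+\epsilon_\K^{-m}$ with $m\geq2$, where $\K=\Q(\sqrt{t^2-4})$. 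Since $t$ determines $\K$, then $\epsilon_\K$, then $m$, this is exactly the set of traces appearing in $B_T$, and conversely every $\gamma\in\Gamma_T$ with $\Tr(\gamma)\in\mathcal{T}_T$ lies in $B_T$; so the problem reduces to the bound $\#B_T=\sum_{t\in\mathcal{T}_T}\#\{\gamma\in\Gamma_T:\Tr(\gamma)=t\}\ll_\eta T^{3/2+\eta}$.

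The crux is then to show $\mathcal{T}_T$ is thin, and the lemma I would prove is that the number of real quadratic fields with $\epsilon_\K\leq Y$ is $O_\eta(Y^{1+\eta})$. Writing $\epsilon_\K=\tfrac12(u+v\sqrt{\Delta_\K})$ with $(u,v)$ the minimal positive solution of $u^2-\Delta_\K v^2=\pm4$, the bound $\epsilon_\K\leq Y$ forces $u\leq2Y$, while $\Delta_\K=(u^2\mp4)/v^2$ is recovered from $u$, a sign, and a square divisor $v^2$ of $u^2\mp4$; since $u^2\mp4$ has $O_\eta(u^\eta)$ divisors, summing over $u$ proves the lemma. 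A field contributing to $\mathcal{T}_T$ through the exponent $m$ satisfies $\epsilon_\K\leq(\sqrt2\,T)^{1/m}$, so
\[
 \#\mathcal{T}_T\ \leq\ 2\sum_{m\geq2}\#\{\K:\ \epsilon_\K\leq(\sqrt2\,T)^{1/m}\}\ \ll_\eta\ \sum_{m\geq2}T^{(1+\eta)/m}\ \ll_\eta\ T^{1/2+\eta},
\]
the sum being dominated by the term $m=2$.

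The last ingredient is a count of matrices of given trace, uniform in $t$: for any integer $t$ with $|t|>2$ I claim $\#\{\gamma\in\Gamma_T:\Tr(\gamma)=t\}=O_\eta(T^{1+\eta})$. Writing $a,d$ for the diagonal and $b,c$ for the off-diagonal entries of $\gamma$, one has $d=t-a$ with $|a|\leq\|\gamma\|\leq T$ and $bc=a(t-a)-1$, which is non-zero because $a(t-a)=1$ has no integer solution when $|t|>2$; for each $a$ the pair $(b,c)$ is then confined to the divisors of $a(t-a)-1$, of which there are $O_\eta(T^\eta)$ since $|a(t-a)-1|\ll T^2$, and summing over the $O(T)$ values of $a$ gives the claim. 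Assembling the three steps gives $\#B_T\ll_\eta T^{1/2+\eta}\cdot T^{1+\eta}=T^{3/2+\eta}$, hence $\mathbb{P}_T(B_T)=O_\eta(T^{-1/2+\eta})$ since $|\Gamma_T|\sim6T^2$; subtracting this from $\mathbb{P}_T(|\Tr(\gamma)|>2)=1-O(T^{-1+\eta})$, which is \eqref{proba_hyperbolique}, finishes the proof. The final remark is immediate: if $N_{\K/\Q}(\epsilon_\K)=-1$ then $m$ is even, hence $\geq2$, hence $\gamma\in B_T$, so $\mathbb{P}_T(|\Tr(\gamma)|>2,\ N_{\K/\Q}(\epsilon_\K)=-1)\leq\mathbb{P}_T(B_T)=O_\eta(T^{-1/2+\eta})$.

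The step I expect to require the most care is the sparsity lemma $\#\{\K:\epsilon_\K\leq Y\}=O_\eta(Y^{1+\eta})$ together with the uniformity in $t$ of the trace count; both are ultimately elementary, needing nothing beyond $d(n)\ll_\eta n^\eta$ for the divisor function, so the real content is the structural observation that $|\epsilon_\gamma|$ failing to be the fundamental unit is equivalent to $\Q[\gamma]$ having an abnormally small regulator, a rigid and hence rare arithmetic condition, and the rest is bookkeeping of the sign in $\pm\epsilon_\K^{\,m}$ and of the norm of $\epsilon_\K$.
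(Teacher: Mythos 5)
Your argument is correct, and its overall architecture is the same as the paper's: show that membership in the bad set forces $\Tr(\gamma)$ into a set of integers of size $O_\eta(T^{1/2+\eta})$, multiply by the fixed-trace count $O_\eta(T^{1+\eta})$ (your divisor-function argument for $\#\{\gamma\in\Gamma_T:\Tr(\gamma)=t\}$ is exactly the paper's Lemma on traces), and divide by $|\Gamma_T|\sim 6T^2$. Where you genuinely diverge is in the sparsity step. The paper never counts fields: it uses the identity $\Tr(\gamma^k)=P_k(\Tr(\gamma))$ for the (rescaled Chebyshev) polynomials $P_k$, writes $\gamma=A^k$ with $A$ in the ring of integers of $\Q[\gamma]$ of integer trace $t$, and deduces from $P_k(t)\geq(t-1)^k$ that $3\leq t\leq 2\sqrt{T}$ and $k\leq 1+\log_2 T$; the bad traces are then parametrized exactly by the pairs $(k,t)$, with no $\eta$-loss at this step. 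You instead prove the standalone lemma $\#\{\K:\epsilon_\K\leq Y\}=O_\eta(Y^{1+\eta})$ by recovering $\Delta_\K$ from the minimal Pell solution $u\leq 2Y$ and a square divisor of $u^2\mp4$; this costs an extra $Y^\eta$ but is harmless since the fixed-trace count already loses $T^\eta$, and it isolates a reusable statement (real quadratic fields ordered by regulator are $Y^{1+o(1)}$-sparse) that the paper leaves implicit. Your parameter $u$ is in essence the paper's $t=\Tr(A)$, so the two parametrizations are close cousins, but the mechanisms are distinct.

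One small point to tighten: as written, $\sum_{m\geq2}T^{(1+\eta)/m}$ is an infinite sum whose terms tend to $1$, so it diverges; you need to observe that the $m$-th term vanishes once $(\sqrt2\,T)^{1/m}<\tfrac{1+\sqrt5}{2}$ (no real quadratic field has a smaller fundamental unit), so only $O(\log T)$ values of $m$ contribute, each at most $T^{(1+\eta)/2}$. The paper handles the analogous truncation explicitly via the bound $k\leq 1+\log_2 T$. With that one line added, your proof is complete.
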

 This property is a bit stronger than the trite fact that $\gamma$ is likely to be primitive (i.e not a power in $\Gamma$): it means that $\gamma$ is not a power in $\GL(2,\Q)$, or equivalently, the length of the associated geodesic is not a strict multiple of the length of another periodic geodesic. For example, primitive matrices like 
 $$\left( \begin{array}{cc} 2 & 1 \\ 1 & 1 \end{array} \right)=\left( \begin{array}{cc} 1 & 1 \\ 1 & 0 \end{array} \right)^2 , \;  \left( \begin{array}{cc} 26 & 135 \\ 5 & 26 \end{array} \right)=\left( \begin{array}{cc} 2 & 9 \\ 1/3 & 2 \end{array} \right)^3,$$
 are quite rare. In the second example, the length of the associated geodesic is 3 times the length of the geodesic given by $\pm\left( \begin{array}{cc} 2 & 1 \\ 3 & 2 \end{array} \right)$.
 
\subsection{Discriminants} 
 
 We now introduce other arithmetical data attached to $\gamma$.
The matrix ring $\Q[\gamma]$ is isomorphic to $\K$ and it will be convenient to identify the ring of integers $\mathcal{O}_\K$ with the corresponding subring of $\Q[\gamma]$. The {\em multiplier ring} can be defined by
$$\mathcal{O}_\gamma=\Q[\gamma]\cap M(2,\Z).$$

 The {\em discriminant} $\Delta_\gamma$ of the periodic geodesic corresponding to the conjugacy class of $\pm \gamma$ in $\PSL(2,\Z)$ can be defined as the discriminant of the multiplier ring $\mathcal{O}_\gamma$. The link with more classical definitions using the quadratic form associated to the closed geodesic will be made in section \ref{quadratic_forms}. Let $u_\gamma,f_\gamma$ be the successive indexes in the inclusions 
$$\Z[\gamma] \subset \mathcal{O}_\gamma \subset \mathcal{O}_\K,$$
namely $u_\gamma=\left[\mathcal{O}_\gamma: \Z[\gamma]\right]$,
$f_\gamma=\left[\mathcal{O}_\K: \mathcal{O}_\gamma \right]$. The number $f_\gamma$ is called the {\em conductor} of the multiplier ring. It turns out that if we write $\gamma=\left( \begin{array}{cc} a & b \\ c & d \end{array} \right)$, then
$u_\gamma =\gcd(c,d-a,b)$. Moreover, 
$$\begin {cases}
\Tr(\gamma)^2-u_\gamma^2\Delta_\gamma= 4,\\
f_\gamma^2\Delta_\K=\Delta_\gamma,
\end{cases}$$
where $\Delta_\K$ is the (fundamental) discriminant of $\K$.
Notice that the first equation can be interpreted as a Pell Equation 
$$X^2-\Delta_\K Y^2=4,$$
 and $(|\Tr(\gamma)|,u_\gamma f_\gamma)$ is then, by Theorem \ref{proba_fondamental}, the fundamental solution of this equation  with high probability. Our main result is that $u_\gamma,f_\gamma$ are usually pretty small:
\begin{thm} \label{maintheorem}
The joint distribution of $(u_\gamma,f_\gamma)$ converges to a probability as $T\rightarrow +\infty$: there exist $(q_{n,m})_{n\geq 1,m\geq 1}$ positive numbers such that $\sum_{n,m\geq 1} q_{n,m}=1$,
with the property that for all $(n,m) \in \N^2$,
$$\lim_{T\to +\infty} \mathbb{P}_T\left(u_\gamma=n,\, f_\gamma=m \right)=q_{n,m}.$$

\end{thm}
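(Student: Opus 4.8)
The plan is to reduce the event $\{u_\gamma=n,\ f_\gamma=m\}$ to finitely many congruence conditions at each prime, to pass to the limit prime by prime using equidistribution of $\Gamma_T$ in the finite quotients $\SL(2,\Z/q\Z)$, and to neutralise the large prime divisors by an elementary count on the trace. The arithmetic reduction comes first. By the formulas recalled above, $u_\gamma=\gcd(c,d-a,b)$ and $u_\gamma^2f_\gamma^2\,\Delta_\K=\Tr(\gamma)^2-4=:\Delta$, so $u_\gamma f_\gamma=c(\Delta)$, where $c(\Delta)$ is the unique positive integer with $\Delta/c(\Delta)^2$ a fundamental discriminant. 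Hence, for hyperbolic $\gamma$,
$$\{u_\gamma=n,\ f_\gamma=m\}=\{u_\gamma=n\}\cap\{c(\Delta)=nm\}=\bigcap_{p}\big\{v_p(u_\gamma)=v_p(n),\ v_p(f_\gamma)=v_p(m)\big\}.$$
For a fixed prime $p$ the condition $v_p(\gcd(c,d-a,b))=\alpha$ depends only on $\gamma\bmod p^{\alpha+1}$, while $v_p(c(\Delta))=\alpha+\beta$ depends only on $\Delta=\Tr(\gamma)^2-4$, hence on $\Tr(\gamma)$, modulo a bounded power of $p$ (for odd $p$ one has $v_p(c(\Delta))=\lfloor v_p(\Delta)/2\rfloor$; at $p=2$ the congruences mod $16$ defining fundamental discriminants add a harmless bounded shift). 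So the local event at $p$ with prescribed finite valuations is a congruence condition modulo a finite power of $p$.

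Next, for a finite set of primes $P$ set $q=\prod_{p\in P}p^{N_p}$ with $N_p$ large enough to read off $v_p(n)$ and $v_p(m)$. Equidistribution of $\Gamma_T$ in $\SL(2,\Z/q\Z)$ — that is, $\mathbb{P}_T(\gamma\bmod q\in S)\to|S|/|\SL(2,\Z/q\Z)|$, which follows from the asymptotic $|\Gamma_T\cap\Gamma(q)g|\sim 6T^2/[\Gamma:\Gamma(q)]$ applied coset by coset — together with the factorisation $\SL(2,\Z/q\Z)=\prod_{p\in P}\SL(2,\Z/p^{N_p}\Z)$, yields
$$\lim_{T\to\infty}\mathbb{P}_T\Big(\bigcap_{p\in P}\{v_p(u_\gamma)=\alpha_p,\ v_p(f_\gamma)=\beta_p\}\Big)=\prod_{p\in P}\mu_p(\alpha_p,\beta_p),$$
where $\mu_p(\alpha,\beta)$ denotes the Haar measure in $\SL(2,\Z_p)$ of the clopen set $\{v_p(u_\gamma)=\alpha,\ v_p(f_\gamma)=\beta\}$.

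The heart of the matter is a uniform tail estimate,
$$\lim_{N\to\infty}\ \limsup_{T\to\infty}\ \mathbb{P}_T\big(\exists\,p>N:\ p\mid u_\gamma f_\gamma\big)=0,$$
which (together with the analogous $\sup_T\mathbb{P}_T(p^k\mid u_\gamma f_\gamma)\to0$ as $k\to\infty$ for each fixed $p$, proved the same way) gives tightness of the laws of $(u_\gamma,f_\gamma)$. Since $c(\Delta)^2\mid\Delta$, we have $p\mid u_\gamma f_\gamma\Rightarrow p^2\mid\Tr(\gamma)^2-4$, so it suffices to bound $\mathbb{P}_T(\exists\,p>N:\ p^2\mid\Tr(\gamma)^2-4)$ over hyperbolic $\gamma$ (the rest being negligible by \eqref{proba_hyperbolique}). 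I would split the range at $p=T^\delta$ for a small fixed $\delta>0$. For $N<p\le T^\delta$ one invokes the quantitative equidistribution behind Theorem~\ref{proba_fondamental}, namely $\#\{\gamma\in\Gamma_T:\Tr(\gamma)\equiv c\!\pmod{p^2}\}=\rho_{p^2}(c)\,|\Gamma_T|+O(p^AT^\kappa)$ with $\rho_{p^2}(c)\ll p^{-2}$ and absolute constants $A>0$, $\kappa<2$ (a consequence of the spectral gap for congruence subgroups), giving
$$\sum_{N<p\le T^\delta}\mathbb{P}_T\big(p^2\mid\Tr(\gamma)^2-4\big)\ll \frac1N+T^{(A+1)\delta+\kappa-2},$$
the error tending to $0$ once $\delta$ is small enough. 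For $p>T^\delta$, if $|\Tr(\gamma)|>2$ and $p^2\mid\Tr(\gamma)^2-4$ then, since $\gcd(\Tr(\gamma)-2,\Tr(\gamma)+2)\mid4$, the prime power $p^2$ divides $\Tr(\gamma)-2$ or $\Tr(\gamma)+2$, which together with $|\Tr(\gamma)|\le 2T$ forces $p\le\sqrt{2T+2}$; on this range the elementary bounds $\#\{\gamma\in\Gamma_T:\Tr(\gamma)=t\}\ll_\varepsilon T^{1+\varepsilon}$ (parametrise $\gamma$ by its top-left entry $a$, so $bc=a(t-a)-1\neq0$ is fixed for $|t|>2$ and the admissible $(b,c)$ are counted by a divisor function) and $\#\{|t|\le 2T:p^2\mid t^2-4\}\ll T/p^2+1$ give $\sum_{T^\delta<p\le\sqrt{2T+2}}\mathbb{P}_T(p^2\mid\Tr(\gamma)^2-4)\ll T^{\varepsilon-\delta}+T^{\varepsilon-1/2}\to0$. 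Adding the two ranges and letting $N\to\infty$ gives the estimate.

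Finally, combining the last two steps: for each $(n,m)$ the finitely many primes dividing $nm$ contribute $\prod_{p\mid nm}\mu_p(v_p(n),v_p(m))$ in the limit, the tail estimate forces all other primes to contribute their trivial value, and tightness upgrades this to
$$q_{n,m}=\prod_{p}\mu_p\big(v_p(n),v_p(m)\big),$$
an Euler product convergent because $1-\mu_p(0,0)\ll p^{-2}$. Each factor is strictly positive: for $p\mid nm$ one exhibits a clopen set of matrices in $\SL(2,\Z_p)$ realising the prescribed valuations by prescribing $v_p(b)$, $v_p(c)$ and $v_p(d-a)$ suitably, using $\Delta=(d-a)^2+4bc$ to pin down $v_p(\Delta)$ (and the mod-$16$ data when $p=2$); hence $q_{n,m}>0$. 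Moreover $\sum_{n,m\ge1}q_{n,m}=\prod_p\big(\sum_{\alpha,\beta\ge0}\mu_p(\alpha,\beta)\big)=\prod_p1=1$, since for each $p$ the clopen sets indexed by $(\alpha,\beta)$ partition $\SL(2,\Z_p)$ up to a null set. I expect the main obstacle to be the tail estimate in the medium range $N<p\le T^\delta$: there the $T^\varepsilon$ loss of the elementary trace count is fatal, and one genuinely needs the quantitative equidistribution in $\SL(2,\Z/p^2\Z)$ with an error term that is power-saving in $T$ and polynomial in $p^2$.
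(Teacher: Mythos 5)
Your proposal is correct and follows essentially the same route as the paper: the local clopen characterization of $v_p(u_\gamma)$ via $\gamma \bmod p^k$ and of $v_p(u_\gamma f_\gamma)$ via $\Tr(\gamma) \bmod p^{2k}$, equidistribution in $\SL(2,\Z/q\Z)$ for the finitely many relevant primes, and the tail estimate split at $p=T^\delta$ using power-saving (spectral-gap) equidistribution modulo $\Gamma(p^2)$ in the medium range and the divisor-bound trace count in the large range. You also correctly identify the medium range as the point where the elementary count fails and the Nevo--Sarnak uniform error term is genuinely needed, which is exactly the crucial input of the paper's Proposition \ref{no_large_prime}.
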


 In fact, the probability $q_{n,m}$ will be shown to be a product of local factors
\begin{equation}\label{qnm}
  q_{n,m}=\prod_{p \; \mathrm{prime}} \mu_p(E_{p,v_p(n),v_p(m)}),
\end{equation}
where $v_p(n),v_p(m)$ are the $p$-adic valuations of $n,m$, $E_{p,\alpha,\beta}$ some clopen subset in the group of $p$-adic matrices $\SL(2,\Z_p)$, and $\mu_p$ the normalized Haar measure on the latter group. The formulas for these local factors are given in Propositions \ref{formula_for_muE} and \ref{i_hate_two}.

 So, since $u_\gamma$ and $f_\gamma$ are usually small, both the discriminant $\Delta_\gamma$ and the fundamental discriminant $\Delta_\K$ are usually roughly of size $T^2$. Some special values of $n,m$ have interesting interpretation:

\begin{cor} \label{cor1}

74\% of the  quadratic forms
$$\tilde{Q}_\gamma:=\mathrm{sgn}(\Tr(\gamma)) \det\left( \left(\begin{array}{c}X \\ Y\end{array}\right),\gamma \left(\begin{array}{c}X \\ Y \end{array}\right) \right),$$
are primitive, that is 
\begin{align*}
\lim_{T\to \infty} \mathbb{P}_T(\Delta_\gamma=\Tr(\gamma)^2-4)& = \sum_m q_{1,m} \\
& =\frac{5}{6}\prod_{p \; prime,p>2} \left(1-\frac{2}{p(p^2-1)}\right)\simeq 0.7439...
\end{align*}
 \end{cor}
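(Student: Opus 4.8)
The plan is to derive the stated formula directly from Theorem \ref{maintheorem} together with the explicit product formula \eqref{qnm} for $q_{n,m}$. First I would note that, since $\Delta_\gamma = u_\gamma^2 \Delta_{\mathbf K}$ and $\Tr(\gamma)^2 - 4 = u_\gamma^2 \Delta_\gamma \cdot$ --- no, more precisely the system of equations in the text gives $\Tr(\gamma)^2-4 = u_\gamma^2 \Delta_\gamma$, so $\Delta_\gamma = \Tr(\gamma)^2-4$ holds if and only if $u_\gamma = 1$. Hence the event $\{\Delta_\gamma = \Tr(\gamma)^2-4\}$ is exactly the event $\{u_\gamma = 1\}$ (we may harmlessly restrict to the hyperbolic case of probability $1-O(T^{-1+\eta})$ by \eqref{proba_hyperbolique}). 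By Theorem \ref{maintheorem} and dominated convergence (the $q_{n,m}$ summing to $1$ legitimizes interchanging the limit with the sum over $m$), $\lim_{T\to\infty}\mathbb P_T(u_\gamma = 1) = \sum_{m\geq 1} q_{1,m}$, which also should be identified with the quadratic form statement: I would recall from Section \ref{quadratic_forms} that $\tilde Q_\gamma$ is primitive precisely when $u_\gamma = 1$, since $u_\gamma = \gcd(c, d-a, b)$ is visibly the content of $\tilde Q_\gamma$.

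Next I would evaluate $\sum_{m\geq 1} q_{1,m}$ using \eqref{qnm}. Writing $n = 1$ forces $v_p(n) = 0$ for every prime $p$, so $q_{1,m} = \prod_p \mu_p(E_{p,0,v_p(m)})$. Summing over all $m \geq 1$ corresponds, by unique factorization, to summing independently over each exponent $\beta = v_p(m) \geq 0$ at each prime; thus
\begin{equation*}
\sum_{m\geq 1} q_{1,m} = \prod_{p \text{ prime}} \left( \sum_{\beta \geq 0} \mu_p(E_{p,0,\beta}) \right).
\end{equation*}
This rearrangement is justified because all terms are nonnegative and the total sum is finite (equal to at most $1$). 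It remains to compute the local sum $S_p := \sum_{\beta\geq 0}\mu_p(E_{p,0,\beta})$ for each $p$. For this I would invoke the explicit formulas of Propositions \ref{formula_for_muE} and \ref{i_hate_two}: for odd $p$ one gets a geometric-type series that telescopes to $S_p = 1 - \frac{2}{p(p^2-1)}$, while the anomalous prime $p = 2$ contributes $S_2 = \frac{5}{6}$. Assembling the Euler product yields
\begin{equation*}
\sum_{m} q_{1,m} = \frac{5}{6}\prod_{\substack{p \text{ prime}\\ p>2}}\left(1 - \frac{2}{p(p^2-1)}\right) \simeq 0.7439\ldots,
\end{equation*}
and the numerical value is obtained by truncating the rapidly convergent product.

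The main obstacle is the explicit computation of the local sums $S_p$, which rests entirely on having the closed forms for $\mu_p(E_{p,\alpha,\beta})$ from Propositions \ref{formula_for_muE} and \ref{i_hate_two} in hand; once those are available, the corollary is a short bookkeeping exercise. A secondary point requiring a little care is the interchange of $\lim_{T\to\infty}$ with the infinite sum over $m$ and with the infinite product over $p$: both are legitimate here because everything is nonnegative and the masses $q_{n,m}$ form a genuine probability distribution with total mass $1$ (Theorem \ref{maintheorem}), so no mass escapes to infinity and Fubini–Tonelli applies without further hypotheses. The identification of primitivity of $\tilde Q_\gamma$ with $u_\gamma = 1$ is essentially the content of the formula $u_\gamma = \gcd(c, d-a, b)$ already recorded in the text, so I would cite Section \ref{quadratic_forms} rather than reprove it.
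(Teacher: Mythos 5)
Your proposal is correct and follows essentially the same route as the paper: identify the event with $\{u_\gamma=1\}$ (equivalently primitivity of $\tilde Q_\gamma$, via $u_\gamma=\gcd(c,d-a,b)$), pass the limit through the sum over $m$ using that $(q_{n,m})$ has total mass $1$, rewrite $\sum_m q_{1,m}$ as the Euler product $\prod_p\sum_{\beta\ge 0}\mu_p(E_{p,0,\beta})$, and evaluate the local sums ($5/6$ at $p=2$, $1-\tfrac{2}{p(p^2-1)}$ at odd $p$) from Propositions \ref{formula_for_muE} and \ref{i_hate_two}. The computations check out, so nothing further is needed.
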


\begin{cor} \label{cor2}
The discriminant $\Delta_\gamma$ is equal to the fundamental discriminant with probability 42\%: 
 \begin{align*}
\lim_{T\to \infty} \mathbb{P}_T(\Delta_\gamma=\Delta_\K)& =\sum_n q_{n,1}\\
&= \frac{75}{112} \prod_{p>2} \left(1- \frac{2p}{p^3-1} \right)\simeq 0,4269...
\end{align*}
 \end{cor}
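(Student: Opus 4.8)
The plan is to translate the stated event into the language already set up and then feed the explicit local computations of Propositions~\ref{formula_for_muE} and~\ref{i_hate_two} into the product formula \eqref{qnm}. The first step is the identity $f_\gamma^2\Delta_\K=\Delta_\gamma$ recorded above: for hyperbolic $\gamma$ one has $\Delta_\K\neq 0$, so $\Delta_\gamma=\Delta_\K$ holds if and only if $f_\gamma^2=1$, i.e.\ if and only if $f_\gamma=1$ (the conductor being a positive integer). The non-hyperbolic matrices, for which $\Delta=\Tr(\gamma)^2-4\leq 0$ and the arithmetic data degenerate, contribute only $O(T^{-1+\eta})$ by \eqref{proba_hyperbolique} and are therefore negligible, so $\mathbb{P}_T(\Delta_\gamma=\Delta_\K)=\mathbb{P}_T(f_\gamma=1)+o(1)$.

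Next I would pass to the limit $T\to+\infty$. Theorem~\ref{maintheorem} gives, for each fixed $N$, the convergence $\sum_{n\leq N}\mathbb{P}_T(u_\gamma=n,\,f_\gamma=1)\to\sum_{n\leq N}q_{n,1}$, and, more generally, $\mathbb{P}_T(u_\gamma\leq N,\,f_\gamma\leq N)\to\sum_{n,m\leq N}q_{n,m}$. Since $\sum_{n,m\geq 1}q_{n,m}=1$, the latter limits increase to $1$ as $N\to\infty$, which is exactly the tightness needed to control the tail $\mathbb{P}_T(f_\gamma=1,\,u_\gamma>N)\leq 1-\mathbb{P}_T(u_\gamma\leq N,\,f_\gamma\leq N)$. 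A standard $\varepsilon/N$ argument then upgrades the pointwise convergence to $\mathbb{P}_T(f_\gamma=1)\to\sum_{n\geq 1}q_{n,1}$, so it remains only to evaluate that series.

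To do so I would use the product structure \eqref{qnm}. For $m=1$ one has $v_p(m)=0$ for every prime $p$, and as $n$ runs over $\N$ the exponent tuple $(v_p(n))_p$ runs over all finitely-supported sequences of nonnegative integers; since all terms are nonnegative, Tonelli (equivalently, the usual Euler-product bookkeeping, which converges because $\mu_p(E_{p,0,0})\to 1$ as $p\to\infty$) gives
\[
\sum_{n\geq 1}q_{n,1}
=\prod_{p\ \mathrm{prime}}\Bigl(\sum_{\alpha\geq 0}\mu_p(E_{p,\alpha,0})\Bigr).
\]
For an odd prime $p$ I would substitute the closed form of $\mu_p(E_{p,\alpha,0})$ from Proposition~\ref{formula_for_muE} and sum the resulting (essentially geometric) series in $\alpha$, which collapses to $1-\tfrac{2p}{p^3-1}$; for $p=2$ I would carry out the analogous, more delicate summation using Proposition~\ref{i_hate_two}, producing the exceptional factor $\tfrac{75}{112}$. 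Multiplying these local factors yields the stated formula, and a direct numerical evaluation gives $\simeq 0.4269$. I expect the only genuinely non-routine points to be the tightness/uniformity step that turns the pointwise limits of Theorem~\ref{maintheorem} into convergence of the full sum over $n$, and the arithmetic of the $p=2$ local computation; the rest is formal manipulation of formulas already in hand.
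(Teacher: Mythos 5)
Your proposal is correct and follows essentially the same route as the paper: reduce the event to $f_\gamma=1$, factor $\sum_n q_{n,1}$ as $\prod_p\bigl(\sum_{\alpha\geq 0}\mu_p(E_{p,\alpha,0})\bigr)$, and evaluate the local sums via Propositions \ref{formula_for_muE} and \ref{i_hate_two} (giving $1-\tfrac{2p}{p^3-1}$ for odd $p$ and $\tfrac{75}{112}$ for $p=2$). Your explicit tightness argument for interchanging the limit with the infinite sum over $n$ is a point the paper leaves implicit, and is a welcome addition.
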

 In \cite{MR3635355}, Bourgain and Kontorovich proved the existence of fundamental geodesics in thin semigroups, a significantly harder problem than the lattice case we are looking at here. A related problem was recently considered by Bhargava, Shankar, and Wang, who also established a positive density result for square-free discriminants of quadratic forms \cite[Theorem 1]{bhargava2022squarefree}.
 
 \begin{cor} 32\% of matrices $\gamma$ in $\Gamma$ are such that $\Z[\gamma]$ is a Dedekind ring:
\begin{align*}
 \lim_{T\to \infty} \mathbb{P}_T(\Z[\gamma]\simeq \mathcal{O}_\K)&= \frac{7}{12}\prod_{p>2} \left(1-\frac{2(p^2+p-1)}{p^2(p^2-1)}\right)\\ & \simeq 0.3267...
\end{align*}
 \end{cor}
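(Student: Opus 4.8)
The plan is to reduce this to the case $(n,m)=(1,1)$ of Theorem \ref{maintheorem}. First I would record the ring-theoretic dictionary: since $\Z[\gamma]\subset\mathcal{O}_\gamma\subset\mathcal{O}_\K$ with respective indices $u_\gamma$ and $f_\gamma$, one has $[\mathcal{O}_\K:\Z[\gamma]]=u_\gamma f_\gamma$, so $\Z[\gamma]=\mathcal{O}_\K$ — equivalently $\Z[\gamma]$ is integrally closed, i.e.\ Dedekind, equivalently $\Z[\gamma]\simeq\mathcal{O}_\K$ as rings (a sub-order of finite index of $\mathcal{O}_\K$ cannot be abstractly isomorphic to $\mathcal{O}_\K$) — if and only if $u_\gamma=f_\gamma=1$. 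This already forces $\gamma$ to be hyperbolic, and conversely on the set of hyperbolic matrices the quantities $u_\gamma,f_\gamma$ are well defined by the relations $\Tr(\gamma)^2-u_\gamma^2\Delta_\gamma=4$ and $f_\gamma^2\Delta_\K=\Delta_\gamma$. Hence, discarding the non-hyperbolic $\gamma$ by means of \eqref{proba_hyperbolique},
$$\mathbb{P}_T\bigl(\Z[\gamma]\simeq\mathcal{O}_\K\bigr)=\mathbb{P}_T\bigl(u_\gamma=1,\,f_\gamma=1\bigr)+O(T^{-1+\eta}),$$
and Theorem \ref{maintheorem} yields $\lim_{T\to\infty}\mathbb{P}_T(\Z[\gamma]\simeq\mathcal{O}_\K)=q_{1,1}$.

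Next I would expand $q_{1,1}$ through the product formula \eqref{qnm}. As $v_p(1)=0$ for every prime $p$, this reads $q_{1,1}=\prod_p\mu_p(E_{p,0,0})$, where $E_{p,0,0}\subset\SL(2,\Z_p)$ is the clopen set of $p$-maximal matrices, i.e.\ those for which $p\nmid u_\gamma$ and $p\nmid f_\gamma$, equivalently those for which $\Z_p[\gamma]$ is already the integral closure of $\Z_p$ in $\Q_p[\gamma]$. It then remains to substitute the explicit values of these local measures computed in Propositions \ref{formula_for_muE} and \ref{i_hate_two}: for the prime $2$ this gives $\mu_2(E_{2,0,0})=\tfrac{7}{12}$, and for every odd prime $p$ it gives $\mu_p(E_{p,0,0})=1-\tfrac{2(p^2+p-1)}{p^2(p^2-1)}$. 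Plugging these in produces the closed form in the statement (and one may double-check for consistency that these local factors are dominated, at each $p$, by the corresponding factors in Corollaries \ref{cor1} and \ref{cor2}, since $\{u_\gamma=f_\gamma=1\}$ is contained in each of the two events considered there).

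The only remaining point is numerical: the Euler product converges absolutely because $\tfrac{2(p^2+p-1)}{p^2(p^2-1)}=O(p^{-2})$, so evaluating the finitely many factors up to a moderate bound $P$ and bounding the tail by $\exp\bigl(-\sum_{p>P}O(p^{-2})\bigr)$ pins the value down to $0.3267\ldots$. I do not anticipate any genuine obstacle: this corollary is just the specialization of Theorem \ref{maintheorem} at $(n,m)=(1,1)$ together with the local computations already carried out, the only care needed being the identification of the event $\{\Z[\gamma]\simeq\mathcal{O}_\K\}$ with $\{u_\gamma=f_\gamma=1\}$ and the harmless discarding of the non-hyperbolic matrices.
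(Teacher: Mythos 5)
Your proposal is correct and follows exactly the route the paper intends: identify the event $\{\Z[\gamma]\simeq\mathcal{O}_\K\}$ with $\{u_\gamma=f_\gamma=1\}$ (discarding the negligible non-hyperbolic matrices), read off $q_{1,1}$ from Theorem~\ref{maintheorem}, and substitute the local values $\mu_2(E_{2,0,0})=\tfrac{7}{12}$ and $\mu_p(E_{p,0,0})=1-\tfrac{2(p^2+p-1)}{p^2(p^2-1)}$ from Propositions~\ref{formula_for_muE} and~\ref{i_hate_two}. The added remarks (a finite-index suborder cannot be abstractly isomorphic to $\mathcal{O}_\K$; consistency with Corollaries~\ref{cor1} and~\ref{cor2}) are sound but not needed.
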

 This is analogous to the following result. For number fields of fixed degree $n$ chosen by picking a random monic integer polynomial $f$ of degree $n$, Bhargava, Shankar, and Wang \cite[Theorem 1.2]{8dbac6ab4ed74641b8e9810eeb5fae5c} established that 
 $\Z[X]/(f)$ is the integer ring of the field with positive probability.

 \subsection{Class numbers}
 
  The last arithmetic quantitity we want to mention are the {\em class numbers}. Recall that the narrow class number $h(\Delta)$ for a discriminant $\Delta$ of a (real) quadratic ring $\mathcal{O}$ is the number of classes of invertible fractional ideals modulo products by element of positive norm. It happens to be also the number of periodic geodesics of $\Gamma\backslash \mathbb{H}^2$ of discriminant $\Delta$. Geodesics with the same discriminant share the same multiplier ring (namely the unique real quadratic ring of the given discriminant), and length, although two geodesics of the same length may have different discriminants, for example the ones given by the conjugacy classes of  $\pm\left( \begin{array}{cc} 49 & 26 \\ 32 & 17 \end{array} \right)$ and $\pm\left( \begin{array}{cc} 33 & 136 \\ 8 & 33 \end{array} \right)$, both primitive of length $2\ln(33+8\sqrt{17})$ and of respective discriminants $1088$ and $68$. The average of class numbers according to length of the geodesics is studied in \cite{MR675187}. A corollary of Theorem \ref{maintheorem} and the Brauer-Siegel Theorem is that those class numbers are typically of size $T^{1\pm \epsilon}$:
 
 \begin{cor}\label{brauer-siegel}
  The quantity $\log h(\Delta_\K)/ \log T$ converges in probability to $1$, that is for all $\eta>0$,
  $$\lim_{T\to +\infty} \mathbb{P}_T\left(\left| \frac{\log h(\Delta_\K)}{\log T}-1 \right| >\eta \right)= 0.$$
  Similarly, the class number $h(\Delta_\gamma)$  satisfies
   for all $\eta>0$,
  $$\lim_{T\to +\infty} \mathbb{P}_T\left(\left| \frac{\log h(\Delta_\gamma)}{\log T}-1 \right| >\eta \right)= 0.$$  
 \end{cor}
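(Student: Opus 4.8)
The plan is to derive the corollary from the Dirichlet class number formula together with Siegel's ineffective lower bound for $L(1,\chi)$ (this is the content of the Brauer--Siegel theorem for quadratic fields), combined with the size estimates for the trace, the regulator and the indices $u_\gamma,f_\gamma$ supplied by Proposition \ref{distrib_trace}, equation (\ref{distrib_length}), Theorem \ref{proba_fondamental} and Theorem \ref{maintheorem}. I work throughout on the hyperbolic event $\{|\Tr(\gamma)|>2\}$, whose complement has probability $O(T^{-1+\eta})\to0$ by (\ref{proba_hyperbolique}); I write $X=a+O_{\mathbb{P}}(1)$ to mean that $X-a$ is tight (bounded in probability) and $X=o_{\mathbb{P}}(\log T)$ to mean $X/\log T\to0$ in probability.

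First I would pin down the sizes of the objects involved. By Proposition \ref{distrib_trace} the law of $\Tr(\gamma)/T$ converges to one with continuous density $\tfrac{2}{\pi}\sqrt{1-x^2}$ on $[-1,1]$, and in particular puts no mass at $0$; since also $|\Tr(\gamma)|\le\|\gamma\|\le T$, this gives $\log|\Tr(\gamma)|=\log T+O_{\mathbb{P}}(1)$. Plugging this into $\Tr(\gamma)^2-u_\gamma^2\Delta_\gamma=4$ and $f_\gamma^2\Delta_\K=\Delta_\gamma$, and using that $u_\gamma,f_\gamma$ are tight (Theorem \ref{maintheorem}), I obtain $\log\Delta_\K=2\log|\Tr(\gamma)|-2\log(u_\gamma f_\gamma)+o_{\mathbb{P}}(1)=2\log T+O_{\mathbb{P}}(1)$; in particular $\mathbb{P}_T(\Delta_\K>D_0)\to1$ for every fixed $D_0$. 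For the regulator, the spectral radius bound $|\epsilon_\gamma|\le\|\gamma\|\le T$ gives $R_\K=\log\epsilon_\K\le\log|\epsilon_\gamma|\le\log T+O(1)$, and since $R_\K\ge\log\tfrac{1+\sqrt5}{2}$ is bounded below, this forces $\log R_\K=O(\log\log T)$ deterministically, hence $\log R_\K=o_{\mathbb{P}}(\log T)$. (Equation (\ref{distrib_length}) and Theorem \ref{proba_fondamental} actually yield the sharper $R_\K=\log T+O_{\mathbb{P}}(1)$, but only the crude upper bound is needed here.)

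Next, for the maximal order: the narrow class number $h(\Delta_\K)$ differs from the ordinary class number $h(\mathcal{O}_\K)$ by a factor in $\{1,2\}$, so it suffices to treat $h(\mathcal{O}_\K)$. The Dirichlet formula gives $h(\mathcal{O}_\K)R_\K=\tfrac12\sqrt{\Delta_\K}\,L(1,\chi_{\Delta_\K})$; combined with the classical bound $L(1,\chi_{\Delta_\K})\ll\log\Delta_\K$ and Siegel's inequality $L(1,\chi_{\Delta_\K})\gg_\eta\Delta_\K^{-\eta}$, this yields, uniformly over real quadratic discriminants as $\Delta_\K\to+\infty$, $\log\!\big(h(\mathcal{O}_\K)R_\K\big)=\tfrac12\log\Delta_\K+o(\log\Delta_\K)$. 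Feeding in $\log\Delta_\K=(2+o_{\mathbb{P}}(1))\log T$, $\log R_\K=o_{\mathbb{P}}(\log T)$ and $\mathbb{P}_T(\Delta_\K>D_0)\to1$, a routine "for every $\eta>0$, for $T$ large enough" argument gives $\log h(\Delta_\K)=\log h(\mathcal{O}_\K)+O(1)=(1+o_{\mathbb{P}}(1))\log T$, which is the first claim.

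Finally, for the multiplier ring I would use the conductor formula expressing $h(\mathcal{O}_\gamma)$ in terms of $h(\mathcal{O}_\K)$: up to the factor in $\{1,2\}$ between ordinary and narrow class numbers, $h(\Delta_\gamma)$ equals $h(\Delta_\K)$ times $f_\gamma\prod_{p\mid f_\gamma}\!\big(1-\big(\tfrac{\Delta_\K}{p}\big)\tfrac1p\big)$ divided by the unit index $[\mathcal{O}_\K^\times:\mathcal{O}_\gamma^\times]$. The first factor lies between $\phi(f_\gamma)$ and $f_\gamma^{1+o(1)}$, and the unit index is $O(\log T)$ because any unit of $\mathcal{O}_\gamma$ (e.g.\ $\epsilon_\gamma$) is a power of $\epsilon_\K$ while $\log|\epsilon_\gamma|\le\log T+O(1)$ and $R_\K\gg1$; hence $\log h(\Delta_\gamma)=\log h(\Delta_\K)+O(\log f_\gamma)+O(\log\log T)=(1+o_{\mathbb{P}}(1))\log T$, since $f_\gamma$ is tight. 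This proves the second claim. The one genuinely delicate point is handling the ineffective $o(1)$ coming from Siegel's theorem against the probabilistic estimates: it must be absorbed by first choosing, for given $\eta>0$, a discriminant threshold $D_0(\eta)$ from Siegel's bound and only then invoking $\mathbb{P}_T(\Delta_\K>D_0(\eta))\to1$; no quantitative rate comes out, but since the statement only asserts convergence in probability, this suffices.
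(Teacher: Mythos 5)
Your proof is correct and follows essentially the same route as the paper: the paper invokes the Brauer--Siegel theorem as a black box where you unpack it into the Dirichlet class number formula plus Siegel's bound on $L(1,\chi_{\Delta_\K})$, and both arguments then combine the size estimates $\Delta_\K = T^{2+o_{\mathbb{P}}(1)}$, $1 \ll R_\K \ll \log T$ with the conductor formula and the tightness of $f_\gamma$ to pass from $h(\Delta_\K)$ to $h(\Delta_\gamma)$. Your treatment is in fact slightly more careful than the paper's on two points worth keeping: the explicit handling of the ineffective constant in Siegel's theorem (fixing $D_0(\eta)$ before letting $T\to\infty$), and the observation that only the deterministic bounds on the regulator are needed, not Theorem \ref{proba_fondamental}.
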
 

\subsection{Concluding remarks}
 It must be noted that the probability $\mathbb{P}_T$ used here is in a sense "not far" from the uniform probability of the set of  periodic geodesics of length $\leq 2\log(T)$.  
If one restricts to subsets of $\Gamma$ that are invariant by conjugacy, hyperbolic and invariant under  $\gamma\mapsto -\gamma$ (and hence correspond to sets of oriented periodic geodesics), the two family of probabilities share the same sets of positive upper density. I don't know a reference for this folklore fact, so a statement and its proof are included in appendix \ref{appendixB}.\\

 This way to pick a random geodesic gives very large discriminants and class number when compared to the size of the fundamental unit. This phenomenon was called the "{\em discriminant-regulator paradox}" by McMullen in \cite[Chapter 28]{McMullen}.
 The explanation is that number theorists usually order quadratic fields by discriminant, while from the geometrical point of view, the fields appear ordered by the length of geodesics - which is twice regulator. To quote Sarnak \cite{zbMATH03910484}, 
 \begin{quote} 
 "That the class numbers are large on average when ordered by the size of the unit is not surprising, as we are favouring discriminants with small units..."
 \end{quote}

 Another interesting arithmetical quantity associated to a periodic geodesic is the Rademacher invariant. Its behaviour was analyzed by Mozzochi in \cite{Mozzochi2013LinkingNO}.

 We refer the reader to \cite{MR3434881} for practical numerical implementation of picking a matrix at random using $\mathbb{P}_T$. The algorithm described there by Rivin follows a probability slightly different from $\mathbb{P}_T$ (but close enough), and is much more efficient than the naive enumeration of all matrices of norm $\leq T$.

\subsection{Plan of the paper}

 In section \ref{arithmetic}, we discuss the arithmetic objects attached to a hyperbolic  matrix $\gamma$. In section \ref{proofth1}, we prove Theorem \ref{proba_fondamental}: the largest eigenvalue of $\gamma$ is the fundamental unit with probability close to $1$. 

In section \ref{section4}, we discuss the $p$-adic formalism and uniform equidistribution results modulo congruence subgroups that will help to establish Theorem \ref{maintheorem}.
 
 In section \ref{section5}, we give the strategy to prove Theorem \ref{maintheorem}, reducing it to three facts, the main two being that knowledge of the $p$-adic valuation of the indexes $u_\gamma,f_\gamma$ can be determined by looking at the reduction of the matrix $\gamma$ modulo $p^k$, for a suitable $k$, and that the discriminant typically does not have large prime square factors.

 In section \ref{section6}, we establish the first claim about the $p$-adic valuation of the conductors. In section \ref{section7}, we determine the local factors $\mu_p(E_{p,\alpha,\beta})$, by counting  matrices in $\SL(2,\Z/p^k \Z)$ in an elementary fashion. Note that in these two sections, in order to keep things simple, the prime number $p$ is usually assumed to be $\neq 2$, while the somewhat tedious case $p=2$ is postponed to the appendix \ref{appendixA}.

 In section 8, we establish the crucial fact that most discriminants are not divisible by the square of a large prime (Proposition \ref{no_large_prime}), using the uniform counting on congruence subgroups.

 In section 9, we explain how to get Corollary \ref{brauer-siegel}.

 In Appendix  \ref{appendixA}, we detail the computations of the local factors for the exceptional prime $p=2$. In Appendix \ref{appendixB}, we prove that enumerating matrices by norm, or periodic geodesic by length, yield the same negligible sets, for properties which are conjugacy invariants of $\PSL(2,\Z)$.

 \section{Arithmetic quantities associated to a matrix $\gamma$}
\label{arithmetic}

 Let us now review in more detail the arithmetic data we are interested in. 
\subsection{Discriminant}
 
 Let $\gamma \in \Gamma$ be a matrix
$$\gamma=\left( \begin{array}{cc} a & b\\ c& d  \end{array}\right).$$
Its characteristic polynomial is:
$$\chi_\gamma(X)=\det(\gamma-XI_2)=X^2-\Tr(\gamma) X+1.$$ 

 We will assume in what follows that $\gamma$ is hyperbolic, that is $|\Tr(\gamma)|>2$. In this case the polynomial $\chi_\gamma$ is irreducible in $\Q[X]$, 
the field $\K=\Q[\gamma]\simeq \Q[X]/(\chi_\gamma)\simeq \Q(\sqrt{\Tr(\gamma)^2-4})$ containing the eigenvalues of $\gamma$ is a real quadratic field. The positive integer $\Tr(\gamma)^2-4$ might not be square-free, but we can always find the unique integers $m>0$ and square-free $D>1$ such that
 $$\Tr(\gamma)^2-4=m^2D.$$
Then the ring of integers of $\K$ is 
$$\mathcal{O}_\K=\begin{cases}
 \Z[\sqrt{D}] & \; \mathrm{if} \; D\equiv 2,3 \, \mathrm{mod} \;4,\\
 \Z[\frac{1+\sqrt{D}}2] & \; \mathrm{if} \; D\equiv 1 \, \mathrm{mod}\; 4. 
\end{cases}
$$
Its discriminant is fundamental and equal to
$$\Delta_\K=\begin{cases}
 4D & \; \mathrm{if} \; D\equiv 2,3 \, \mathrm{mod} \; 4,\\
 D   &\; \mathrm{if} \; D\equiv 1 \, \mathrm{mod} \; 4. 
\end{cases}
$$
  
 We can also consider the ring $\Z[\gamma]\simeq \Z[X]/(\chi_\gamma)$. The discriminant of this ring is equal to $\Tr(\gamma)^2-4$. We define the multiplier ring $\mathcal{O}_\gamma$ as the stabilizer of $\Z^2$:
 $$\mathcal{O}_\gamma=\{\gamma'\in \Q[\gamma] \, : \, \gamma'(\Z^2)\subset \Z^2 \}=\Q[\gamma]\cap M(2,\Z).$$
 Recall that we defined the discriminant $\Delta_\gamma$ as the discriminant of this ring. All orders, i.e. subrings of $\K$ which are $\Z$-modules of rank $2$, are of the form $\Z[f\sqrt{D}]$ or $\Z[f\frac{1+\sqrt{D}}2]$ depending if $D=2,3$, or $1$ mod $4$, for some integer $f$ called the conductor of the ring. The conductor is also the index of the ring in the maximal order $\mathcal{O}_\K$, and its discriminant is $f^2\Delta_\K$. Here we defined $f_\gamma$ as the conductor of the multiplier ring $\mathcal{O}_\gamma$, so 
 $$\Delta_\gamma=f_\gamma^2\Delta_K.$$
 By definition, $u_\gamma$ is the index of $\Z[\gamma]$ in $\mathcal{O}_\gamma$, so $m=u_\gamma f_\gamma$ and 
 $$\Tr(\gamma)^2-4=u_\gamma^2 \Delta_\gamma.$$
 
 From a practical point of view, given the coefficients $a,b,c,d$ of $\gamma$, $m$ and $D$ are deduced from the trace by factoring possible squares factors of $(a+d)^2-4$, $\Delta_\K$ is computed using $D$, but it remains to identify $u_\gamma$ and $f_\gamma$ as factors of $m$. There is indeed a simple formula for $u_\gamma$:
 
 \begin{lem}\label{Deltareduit}
  The index of $\Z[\gamma]$ in $\mathcal{O}_\gamma$ is given by:
  $$u_\gamma=gcd(c,d-a,b)$$
 \end{lem}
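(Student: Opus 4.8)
The plan is to describe $\mathcal{O}_\gamma$ completely explicitly as a sublattice of the $\Q$-vector space $\Q[\gamma]$, which has basis $\{I_2,\gamma\}$ since $\gamma$ is hyperbolic (so $\gamma\notin\Q I_2$). Writing a general element of $\Q[\gamma]$ as $xI_2+y\gamma$ with $x,y\in\Q$, one computes
\[
xI_2+y\gamma=\left(\begin{array}{cc} x+ya & yb \\ yc & x+yd \end{array}\right),
\]
so this matrix lies in $M(2,\Z)$ exactly when $yb,yc\in\Z$, $x+ya\in\Z$ and $x+yd\in\Z$. Subtracting the last two conditions, they are equivalent to $y(d-a)\in\Z$ together with $x+ya\in\Z$. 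Hence membership in $\mathcal{O}_\gamma$ is governed by the three divisibility conditions $yb,\,yc,\,y(d-a)\in\Z$ on $y$, and then the single condition $x+ya\in\Z$ on $x$ (which can always be met).

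Next I would pin down the admissible denominators of $y$. Set $u=\gcd(b,c,d-a)$. If $y=p/q$ is in lowest terms and satisfies $yb,yc,y(d-a)\in\Z$, then $q$ divides each of $b,c,d-a$, hence $q\mid u$; conversely $y=1/u$ works. So the set of admissible $y$ is exactly $\tfrac1u\Z$. Consequently the matrix $\beta:=\tfrac1u(\gamma-aI_2)=\tfrac1u\left(\begin{array}{cc} 0 & b \\ c & d-a \end{array}\right)$ lies in $\mathcal{O}_\gamma$, and any $\gamma'=xI_2+y\gamma\in\mathcal{O}_\gamma$ can be rewritten, using $y=m/u$ with $m\in\Z$, as $\gamma'=(x+ya)I_2+m\beta$ with $x+ya\in\Z$. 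Since also $I_2,\beta\in\mathcal{O}_\gamma$, this shows $\{I_2,\beta\}$ is a $\Z$-basis of $\mathcal{O}_\gamma$.

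Finally, since $\gamma=aI_2+u\beta$, the lattice $\Z[\gamma]=\Z I_2\oplus\Z\gamma$ sits inside $\mathcal{O}_\gamma=\Z I_2\oplus\Z\beta$ with transition matrix $\left(\begin{array}{cc} 1 & 0 \\ a & u \end{array}\right)$, of determinant $u$, so $u_\gamma=[\mathcal{O}_\gamma:\Z[\gamma]]=u=\gcd(b,c,d-a)$. There is no serious obstacle in this argument: the only point that needs a moment's care is verifying that the admissible $y$ form \emph{exactly} $\tfrac1u\Z$ and not some larger set, which is the elementary gcd observation above; the remainder is bookkeeping with $2\times2$ integer matrices.
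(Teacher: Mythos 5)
Your proof is correct and follows essentially the same route as the paper: both arguments identify $\mathcal{O}_\gamma$ as the lattice $\Z I_2+\Z\,\tfrac{1}{u}(\gamma-aI_2)$ with $u=\gcd(b,c,d-a)$, via the observation that the denominator (in lowest terms) of the $\gamma$-coefficient must divide each of $b$, $c$, $d-a$. Your version is slightly more explicit about the final index computation (the paper simply asserts that $\Z[\gamma]$ has index $u$ in this lattice), but the substance is identical.
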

 \begin{proof}   
  The set $\mathcal{O}_\gamma$ is defined by
  $$\mathcal{O}_\gamma=M(2,\Z)\cap \{\lambda I_2+\mu \gamma \, : \, (\lambda,\mu) \in \Q^2 \}.$$
  If we set $u'=gcd(c,d-a,b)$, we claim that it is equal to
  $$\mathcal{O}':=\Z I_2+ \Z \frac{\gamma-aI_2}{u'}.$$
  This ring is obviously has $\Z[\gamma]$ as a subring of index $u'$, so the equality $\mathcal{O}_\gamma=\mathcal{O}'$ is sufficient to conclude that $u_\gamma=u'$.
  By definition of $u'$, $\frac{\gamma-aI_2}{u'}$ has indeed integer coefficients, so $\mathcal{O}_\gamma\subset \mathcal{O}'$. Reciprocally,  
  let $\lambda,\mu \in \Q^2$, such that $\gamma'=\lambda I_2+\mu \gamma$ has integer coefficients,
  and write $\mu=\frac{v}{w}$, with $v,w$ integers, $w>0$ and coprime to $v$.
  Then 
  $$\gamma'= \left( \begin{array}{cc} \lambda + \frac{va}w  & \frac{vb}w \\ \frac{vc}w & \lambda + \frac{vd}w  \end{array}\right),$$
  but since $\lambda + \frac{va}w$ is an integer, say $n$, the matrix
  $$\gamma'-nI_2=\left( \begin{array}{cc} 0  & \frac{vb}w \\ \frac{vc}w & \frac{v(d-a)}w  \end{array}\right)=\frac{v(\gamma-aI_2)}{w},$$
  so $w$ divides $vc,vb,v(d-a)$, and since $v$ is coprime to $w$, $w$ divides $u'$, so $\gamma'-nI_2 \in \Z\frac{(\gamma-aI_2)}{u'}$, and $\gamma'\in \mathcal{O}'$ as
   requested. 
 \end{proof} 
 
\subsection{Quadratic forms}\label{quadratic_forms}

  Although we will not use this, here we make the link with the more classical definition of the discriminant $\Delta_\gamma$ using quadratic forms.
  The quadratic form
$$\tilde{Q}_\gamma(X,Y):=\mathrm{sgn}(\Tr(\gamma)) \det\left( \left(\begin{array}{c}X \\ Y\end{array}\right),\gamma\left(\begin{array}{c}X \\ Y \end{array}\right) \right),$$
which is obviously $\gamma$-invariant with integer coefficients, can be expressed
$$\tilde{Q}_\gamma(X,Y)= \mathrm{sgn}(\Tr(\gamma)) (cX^2+(d-a)XY-bY^2).$$
 Its discriminant (as a quadratic form) is, again, $\Tr(\gamma)^2-4$. 
The factor $\mathrm{sgn}(t_M)$ in the expression is here to insure that $\tilde{Q}_\gamma=\tilde{Q}_{-\gamma}$, so $\tilde{Q}_\gamma$ depends only on the class of $\gamma$ in $\PSL(2,\Z)$. \\

The form $\tilde{Q}_\gamma$ is not necessarily primitive, meaning the gcd of its coefficients - here $u_\gamma$ by Lemma \ref{Deltareduit} -, may not be $1$. So generally one consider instead the quadratic form 
 $$Q_\gamma=\frac{\tilde{Q}_\gamma}{u_\gamma},$$
see for example \cite{MR675187}, \cite{MR2504745}, \cite{MR3635355}, \cite{MR3058601}. The discriminant of this quadratic form is then $\Delta_\gamma$, which is the usual definition for the discriminant of an hyperbolic conjugacy class of $\PSL(2,\Z)$. The discriminant characterize the multiplier ring, that is two primitive hyperbolic matrices have the same discriminant if and only if their multiplier rings are isomorphic. This situation is specific to the quadratic case. The set of geodesics sharing the same discriminant $\Delta$ can be identified with the set of $\SL(2,\Z)$-classes of primitives integer quadratic forms of discriminant $\Delta$. This set also identifies with the narrow class group of their common multiplier ring, that is the group of invertible fractional ideals modulo multiplication by elements of positive norm. The cardinal of this set is called the {\em class number} $h(\Delta)$.

\subsection{Units}
 We denote by 
 $$\{ \epsilon_\gamma,\bar{\epsilon}_\gamma \}=\left\{ \frac{\Tr(\gamma)\pm \sqrt{ \Tr(\gamma)^2-4}}2 \right\},$$
  the two eigenvalues of $\gamma$, chosen so that $|\epsilon_\gamma|>1>|\bar{\epsilon}_\gamma|$. The number $|\epsilon_\gamma|$ is thus a positive unit in the field $\K$.
The quadratic field $\K$ has a {\em fundamental unit} $\epsilon_\K \in \mathcal{O}_\K$, a {\em regulator} $R_\K=\log \epsilon_\K$, and a {\em class number} $h(\Delta_\K)$. The norm of the fundamental unit $N_{\K/\Q}(\epsilon_\K)=\epsilon_\K \bar{\epsilon}_\K$ is either $\pm 1$.
By Dirichlet units' Theorem, there exists an integer $e_\gamma$ such that $|\epsilon_M|=(\epsilon_\K)^{e_\gamma}$.

\subsection{Distribution of the trace} Here we indicate how to recover the formula in  Proposition \ref{distrib_trace} and the asymptotic (\ref{distrib_length}). It may be worth mentioning that these two formulas depends on the choice of the norm $\|.\|$ considered in the definition of $\Gamma_T$, here the Frobenius norm $\| \gamma \|=\sqrt{\Tr(^t\gamma \gamma)}$, while Theorems \ref{proba_fondamental} and \ref{maintheorem} do not.\\

By \cite[Theorem 2]{MR2286635}, applied to the characteristic function 
$\chi_{\alpha,\beta}$ of the set of matrices of Frobenius norm $\leq 1$ and of trace between $\alpha<\beta$,
\begin{equation*}
\begin{split}
\lim_{T\to +\infty} \mathbb{P}_T& \left(\frac{\Tr(\gamma)}T  \in (\alpha,\beta)\right)=\\
& \frac1{2\pi^2} \int_{0}^1\int_{0}^{2\pi}\int_{0}^{2\pi} \chi_{\alpha,\beta}
 \left( \begin{array}{cc} r\sin \theta \sin \varphi & r\cos \theta \sin \varphi \\
r\sin \theta \cos \varphi & r\cos \theta \cos \varphi
\end{array}
\right)r\, {d}r \,\mathrm{d}\theta \, \mathrm{d}\varphi.
\end{split}
\end{equation*}
 Note that \cite[Theorem 2]{MR2286635} assumes the function to be continuous, and $\chi_{\alpha,\beta}$ is not. Here $\chi_{\alpha,\beta}$ can be approximated from above or below by continuous functions having approximately the same integral with respect to the limit measure, so this formula is also valid in this case.
Since the trace of the above matrix is $r\sin(\theta+\varphi)$, the above integral is more easily computed using the change of variables
$$(r,\theta,\varphi)\mapsto (x=r\sin(\theta+\varphi),y=r\cos(\theta+\varphi),\theta),$$
which sends $(0,1)\times (0,2\pi)\times (0,2\pi)$ to $\mathbb{D}\times [0,2\pi]$, the set of matrices of trace in $(\alpha,\beta)$ into $\{x\in (\alpha,\beta), (x,y) \in \mathbb{D}, \theta \in (0,2\pi)\} $ and changes the polar coordinate measure $r\, {d}r \,\mathrm{d}\theta \, \mathrm{d}\varphi$ into $\mathrm{d}x \, \mathrm{d}y \, \mathrm{d}\theta$. We thus get  
$$
\lim_{T\to +\infty} \mathbb{P}_T \left(\frac{\Tr(\gamma)}T  \in (\alpha,\beta)\right)=\frac{2}{\pi} \int_{\sup(\alpha,-1)}^{\inf(\beta,1)} \sqrt{1-x^2}\, \mathrm{d}x,$$
as claimed.\\
 
 To recover the asymptotic (\ref{distrib_length}), we first notice that provided that $\gamma$ is hyperbolic, $| \epsilon_\gamma-\Tr(\gamma)|\leq 1$, so Proposition \ref{distrib_trace} holds for $\frac{\epsilon_\gamma}T$ instead of $\frac{\Tr(\gamma)}T$. Recall that the length is given by $\ell_\gamma=2\log(|\epsilon_\gamma|)$. Then (\ref{distrib_length}) is obtained from the previous formula by the change of variable $z=-2\ln(|x|)$.

\section{Eigenvalues are fundamental units}
\label{proofth1}

 In this section, we prove Theorem \ref{proba_fondamental}, which states that with high probability, the absolute value of the eigenvalue $\epsilon_\gamma$ of the randomly picked matrix $\gamma$ is the fundamental unit of the integer ring of the underlying field $\Q[\gamma]$. The idea is that if that does not happen, then the trace of $\gamma$ must lie in the integer image of a family of polynomials $P_k$ of degree $k\geq 2$, which is a fairly scarce set in $\mathbb{N}$. We first prove three useful lemmata.
 
\subsection{Almost all matrices are hyperbolic}

 The following Lemma is a minor variant of \cite[Lemma 7.1]{MR3635355}.
 \begin{lem}\label{bornetrace}
   For all $\eta>0$, there exists $C_\eta>0$ such that for all $t\in \Z$ and all $T\geq 2$,
  $$|\{\gamma \in \Gamma_T \, : \, \Tr(\gamma)=t\}| \leq C_\eta T^{1+\eta}.$$
 \end{lem}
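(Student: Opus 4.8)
# Proof Proposal for Lemma \ref{bornetrace}

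The plan is to count matrices $\gamma = \left(\begin{smallmatrix} a & b \\ c & d\end{smallmatrix}\right) \in \SL(2,\Z)$ of bounded Frobenius norm subject to the linear constraint $a + d = t$. The starting observation is that once $t$ is fixed, the problem reduces to counting solutions of the single equation $ad - bc = 1$ together with $a + d = t$, i.e. $a(t-a) - bc = 1$, which rearranges to $bc = a(t-a) - 1 = -(a - \frac{t}{2})^2 + \frac{t^2}{4} - 1$. So the strategy is: for each admissible value of $a$ (there are $O(T)$ of them since $|a| \leq \|\gamma\| \leq T$), set $N = N(a) := a(t-a) - 1$, and bound the number of factorizations $N = bc$ with $b, c$ integers of absolute value at most $T$.

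The key input is the divisor bound: the number of ways to write a nonzero integer $N$ as an ordered product $bc$ is at most $d(|N|) \ll_\eta |N|^{\eta/2} \ll_\eta T^{\eta}$, since $|N| = |a(t-a)-1| \leq 2T^2 + 1$ for the relevant range (using $|a| \leq T$ and $|t-a| = |d| \leq T$). Summing over the $O(T)$ choices of $a$ gives the bound $O_\eta(T^{1+\eta})$, which is exactly what is claimed. The one case that needs separate attention is $N = 0$: this forces $b = 0$ or $c = 0$, and $N(a) = 0$ means $a(t-a) = 1$, so $a \in \{1, -1\}$ and correspondingly $a = d = 1, t = 2$ or $a = d = -1, t = -2$; then $bc = 0$ with the norm constraint still only allows $O(T)$ matrices (one of $b,c$ is zero, the other ranges over $O(T)$ values, but actually $ad = 1$ already and $bc=0$, so $b$ or $c$ is free up to size $T$), comfortably within the bound. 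Alternatively one notes this degenerate case only occurs for $t = \pm 2$, and even a trivial count of $O(T^2)$ matrices there — wait, that is too weak; one should instead observe $b=0$ forces the matrix to be $\left(\begin{smallmatrix} \pm 1 & 0 \\ c & \pm 1\end{smallmatrix}\right)$ with $|c| \leq T$, giving $O(T)$ matrices, and symmetrically for $c = 0$.

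I do not expect a serious obstacle here; the lemma is essentially the divisor bound dressed up. The only mildly delicate point is making sure the estimate $d(n) \ll_\eta n^\eta$ is applied uniformly — but this is a standard fact (the implied constant depends only on $\eta$), and since $|N| \leq 2T^2 + 1$ we have $d(|N|) \ll_\eta (2T^2+1)^{\eta/2} \ll_\eta T^{\eta}$ for $T \geq 2$, absorbing the constant $2$ into $C_\eta$. Thus the total count is $\sum_{|a| \leq T} d(|N(a)|) + O(T) \ll_\eta T \cdot T^\eta + O(T) \ll_\eta T^{1+\eta}$, completing the proof.
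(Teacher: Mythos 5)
Your proof is correct and follows essentially the same route as the paper: fix the diagonal entry $a$ (giving $O(T)$ choices), observe that $bc=a(t-a)-1$ is determined and of size $O(T^2)$, and invoke the divisor bound $d(N)\ll_\eta N^{\eta/2}$ to bound the number of factorizations, yielding $O_\eta(T^{1+\eta})$ overall. Your explicit treatment of the degenerate case $a(t-a)-1=0$ (which only occurs for $t=\pm 2$ and contributes $O(T)$ matrices) is a small point the paper passes over silently, but otherwise the two arguments coincide.
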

 \begin{proof}  
 We can and will assume that $|t|\leq 2T$, otherwise the set $\{\gamma \in \Gamma_T \, : \, \Tr(\gamma)=t\}$ is empty so the bound is trivial.\\
 
  Let $\gamma=\left( \begin{array}{cc} a & b\\ c& t-a  \end{array}\right)$ be a integer matrix of norm $\leq T$, trace $t$ and determinant $1$.
 There is at most possible $2T+1$ choices for $a$ in $[-T,T]$. Since the determinant is one, we have $bc=-a^2+at-1$, so for each $a$, the integer $b$ must be chosen among the divisors of $|-a^2+at-1|\leq 3T^2+1\leq 4T^2$. Recall (see e.g. \cite[Thm 315]{MR568909}) that for all $\eta>0$, there exists a $c_\eta>0$ such that the number of divisors $d(N)$ of an integer $N\geq 1$ satisfies $d(N)\leq c_\eta N^{\eta/2}$. So we have $2T+1$ choices for $a$, given $a$ we have at most $c_\eta (4T^2)^{\eta/2}$ choices for $b$, and $c$ is deduced from $a,b,t$. Overall, there is at most $O(T^{1+\eta})$ possible choices, where the implied constant depends only on $\eta$.
 \end{proof}

 When applied to the finite set of values $t\in \{-2,-1,0,1,2\}$, and recalling the well-known asymptotic for $|\Gamma_T|$:
 $$|\Gamma_T|\sim 6T^2,$$
(see for example \cite[Theorem 1.1]{JTNB_2005__17_1_301_0}), we recover the claim (\ref{proba_hyperbolique}) of the introduction: 

 \begin{cor} \label{proba_hyp}
 For any $\eta>0$,
 $$ \mathbb{P}_T( |\Tr(\gamma)|> 2)= 1-O_\eta(T^{-1+\eta}).$$
  \end{cor}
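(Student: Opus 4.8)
The plan is to read this off directly from Lemma \ref{bornetrace} together with the known count $|\Gamma_T|\sim 6T^2$; there is really nothing more to it. First I would note that a matrix $\gamma\in\Gamma_T$ fails to be hyperbolic exactly when $|\Tr(\gamma)|\le 2$, i.e. when $\Tr(\gamma)$ takes one of the five integer values $-2,-1,0,1,2$. Applying Lemma \ref{bornetrace} to each of these five values of $t$ and summing gives
$$|\{\gamma\in\Gamma_T \, : \, |\Tr(\gamma)|\le 2\}|\le 5\,C_\eta\,T^{1+\eta}$$
for all $T\ge 2$, where $C_\eta$ is the constant furnished by that lemma.

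Next I would divide by $|\Gamma_T|$. Since $|\Gamma_T|\sim 6T^2$, and $|\Gamma_T|$ is nondecreasing and eventually positive, we have $|\Gamma_T|\gg T^2$ uniformly for $T\ge 2$; hence
$$\mathbb{P}_T(|\Tr(\gamma)|\le 2)=\frac{|\{\gamma\in\Gamma_T \, : \, |\Tr(\gamma)|\le 2\}|}{|\Gamma_T|}=O_\eta(T^{-1+\eta}),$$
and passing to the complementary event yields $\mathbb{P}_T(|\Tr(\gamma)|>2)=1-O_\eta(T^{-1+\eta})$, which is the claim.

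The only point that warrants a moment's care — and it is a very mild one — is to make sure the implied constant in $|\Gamma_T|\sim 6T^2$ is turned into a uniform lower bound $|\Gamma_T|\ge c\,T^2$ valid for all $T\ge 2$, rather than just an asymptotic statement, so that the ratio estimate is genuinely an $O_\eta(T^{-1+\eta})$ bound; this is immediate from monotonicity of $T\mapsto|\Gamma_T|$ and positivity for, say, $T\ge 2$. I do not expect any real obstacle in this corollary; the substantive content is entirely contained in Lemma \ref{bornetrace}, whose proof uses only the divisor bound $d(N)\ll_\eta N^{\eta/2}$ applied to $|{-a^2+at-1}|\ll T^2$.
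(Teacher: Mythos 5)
Your argument is correct and is exactly the paper's: apply Lemma \ref{bornetrace} to the five traces $t\in\{-2,-1,0,1,2\}$, sum the bounds, and divide by $|\Gamma_T|\sim 6T^2$. The extra remark about upgrading the asymptotic to a uniform lower bound $|\Gamma_T|\gg T^2$ is a harmless (and valid) point of care.
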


 \subsection{Traces of powers}
 
 \begin{lem} \label{puissancedeux}
 Let $\gamma \in {\bf GL}(2,\C)$ be a matrix such that $\det(\gamma)\in \{\pm 1\}$. Then
  $$\Tr(\gamma^2)=\Tr(\gamma)^2\mp 2.$$
 
 \end{lem} 
  This follows immediately for the characteristic equation $\gamma^2-\Tr(\gamma)\gamma+\det(\gamma)I_2=0$. The trace of a power is in general a polynomial of the trace:
 
\begin{lem} \label{puissancek}
 There exists a sequence $(P_k)_{k\geq 2}$ of polynomials, where $P_k$ is of degree $k$ such that for all matrices $\gamma \in {\bf SL}(2,\C)$,
  $$\Tr(\gamma^k)=P_k(\Tr(\gamma)),$$
and we have for $x> 2$,
 $$P_k(x)\geq (x-1)^{k}.$$
  Moreover, if $k$ is odd and $|x|>2$, then $x$ and $P_k(x)$ share the same sign. In fact, $P_k(X)=2T_k(X/2)$, where $(T_k)_{k\geq 0}$ is the Chebyshev polynomial of the first kind. 
 \end{lem}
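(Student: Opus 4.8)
The plan is to produce the sequence $(P_k)$ by the standard recursion coming from the characteristic equation and then verify the three stated properties (degree, lower bound, sign) by induction, finally identifying $P_k$ with the rescaled Chebyshev polynomial. First I would set $P_0(x)=2$, $P_1(x)=x$, and observe that multiplying the Cayley--Hamilton relation $\gamma^2=\Tr(\gamma)\gamma-I_2$ by $\gamma^{k-1}$ and taking traces gives $\Tr(\gamma^{k+1})=\Tr(\gamma)\Tr(\gamma^k)-\Tr(\gamma^{k-1})$, so the polynomials must satisfy
\begin{equation*}
P_{k+1}(x)=x\,P_k(x)-P_{k-1}(x).
\end{equation*}
An immediate induction shows $P_k$ has degree $k$ with leading coefficient $1$ (for $k\geq 1$), which already gives the degree claim for $k\geq 2$.

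Next I would establish the lower bound $P_k(x)\geq (x-1)^k$ for $x>2$. The cleanest route is a slightly strengthened induction hypothesis, for instance that for $x>2$ one has $P_k(x)\geq (x-1)^k$ together with $P_k(x)\geq P_{k-1}(x)\geq 1$ (both sequences being positive and increasing in $k$ for fixed $x>2$); then
\begin{equation*}
P_{k+1}(x)=x\,P_k(x)-P_{k-1}(x)\geq x\,P_k(x)-P_k(x)=(x-1)P_k(x)\geq (x-1)^{k+1},
\end{equation*}
and the monotonicity $P_{k+1}(x)\geq P_k(x)$ follows because $P_{k+1}(x)-P_k(x)=(x-1)P_k(x)-P_{k-1}(x)\geq (x-1)P_{k-1}(x)-P_{k-1}(x)=(x-2)P_{k-1}(x)>0$. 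The base cases $k=0,1,2$ are checked directly (e.g. $P_2(x)=x^2-2\geq (x-1)^2$ for $x>2$).

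For the sign statement when $k$ is odd, I would either note that $P_k$ is an odd polynomial when $k$ is odd (clear from the recursion, since parity of $P_k$ matches parity of $k$) and combine this with the bound above: for $x>2$, $P_k(x)\geq (x-1)^k>0$, hence for $x<-2$, $P_k(x)=-P_k(-x)<0$, so $x$ and $P_k(x)$ share the same sign. Finally, to identify $P_k(X)=2T_k(X/2)$, I would recall that the Chebyshev polynomials of the first kind satisfy $T_0=1$, $T_1(y)=y$, and $T_{k+1}(y)=2yT_k(y)-T_{k-1}(y)$; substituting $y=X/2$ shows that $Q_k(X):=2T_k(X/2)$ satisfies $Q_0=2$, $Q_1(X)=X$, and $Q_{k+1}(X)=X Q_k(X)-Q_{k-1}(X)$, the same initial data and recursion as $P_k$, so $P_k=Q_k$ by induction. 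As a sanity check this is consistent with the trace identity $\Tr(\gamma^k)=2\cos(k\theta)$ when $\Tr(\gamma)=2\cos\theta$.

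I do not anticipate a serious obstacle here; the only mild subtlety is choosing the right strengthened induction hypothesis for the lower bound so that the cross term $-P_{k-1}(x)$ is absorbed cleanly, which the monotonicity clause above handles. Everything else is a routine verification of a linear recursion.
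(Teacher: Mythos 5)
Your proposal is correct, but it takes a noticeably different route from the paper on two of the three claims. For the construction of $P_k$, the paper works at the matrix level: it first proves $\gamma^k=Q_{k-1}(\Tr\gamma)\,\gamma-Q_{k-2}(\Tr\gamma)\,I_2$ with the second-kind Chebyshev polynomials $Q_k$ and then takes the trace, whereas you derive the three-term recursion $P_{k+1}=XP_k-P_{k-1}$ directly on traces from Cayley--Hamilton; these are equivalent and equally routine. The real divergence is in the lower bound and the sign statement: the paper exhibits a hyperbolic matrix of trace $x$, writes $P_k(x)=\lambda^k+\lambda^{-k}$ with $\lambda=x-\lambda^{-1}>x-1$, and reads off both the inequality and the sign from the eigenvalues, while you prove the bound by a strengthened induction on the recursion (carrying along the monotonicity $P_k\geq P_{k-1}\geq 1$ to absorb the cross term) and get the sign from the parity of $P_k$. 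Your induction is sound -- the base cases and the step $P_{k+1}(x)\geq (x-1)P_k(x)$ check out, and the parity claim follows cleanly from the recursion -- and it has the minor virtue of being purely algebraic, with no appeal to eigenvalues or to the existence of a matrix realizing a given trace; the paper's eigenvalue argument is shorter and makes the inequality transparent (it even yields the strict inequality $P_k(x)>(x-1)^k$, though only $\geq$ is claimed). Either proof serves the application equally well.
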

 \begin{proof} Define the sequence of polynomials $(Q_k)_{k\geq 0}$ of degree $k$ by the following recursion:
 $$\begin{cases}
 Q_0(X)=1,\\
 Q_1(X)=X,\\
 Q_{k+2}(X)=XQ_{k+1}-Q_{k}, \, k\geq 0.
 \end{cases}$$
 In other words, $Q_k(2X)$ are the Chebyshev polynomials of the second kind.
 If $\gamma \in {\bf SL}(2,\C)$, we claim that 
 \begin{equation} \label{puissance}
 \forall k\geq 2, \;
 \gamma^k=Q_{k-1}(\Tr(\gamma))\gamma - Q_{k-2}(\Tr(\gamma))I_2.
 \end{equation}
 Indeed, writing $t=\Tr(\gamma)$, this is true for $k=2$ by the characteristic equation
 $$\gamma^2=t\gamma- I_2,$$
 and if for some $k\geq 2$,
 $$\gamma^k=Q_{k-1}(t)\gamma - Q_{k-2}(t)I_2,$$
 then, multiplying by $\gamma$ and using the characteristic equation,
 \begin{align*}
 \gamma^{k+1}&=Q_{k-1}(t)(t\gamma-I_2) - Q_{k-2}(t)\gamma,\\
 &=(tQ_{k-1}(t)-Q_{k-2}(t))\gamma-Q_{k-1}(t)I_2,\\
 &=Q_{k}(t)\gamma-Q_{k-1}(t)I_2.
 \end{align*}
Thus Equation (\ref{puissance}) follows by induction. Now we define for $k\geq 2$ the polynomial $P_k$
$$P_k=XQ_{k-1}-2Q_{k-2}=Q_k-Q_{k-2},$$  
and it can be checked that $P_k(2X)/2$ agree with the Chebyshev polynomials of the first kind. By Equation (\ref{puissance}),
$$\Tr(\gamma^k)=P_k(\Tr(\gamma)).$$
 
 Let's now prove the lower bound  $P_k(x)\geq (x-1)^{k}$ for all $x>2$. Let $x>2$, there exists an hyperbolic matrix $\gamma$ in $\SL(2,\C)$ whose trace is $x=\Tr(\gamma)$. Let $\lambda,\lambda^{-1}$ be the two eigenvalues of $\gamma$, inverse of each other since $\det(\gamma)=1$. Up to switching $\lambda$ and its inverse, we can assume that $\lambda>1>\lambda^{-1}>0$, so in particular
 $$\lambda=\Tr(\gamma)-\lambda^{-1} > \Tr(\gamma)-1=x-1,$$
 so for any $k\geq 2$, 
 $$P_k(x)=\Tr(\gamma^k)=\lambda^k+\lambda^{-k}> (x-1)^k.$$
With the same notations, notice that $\lambda,\lambda^{-1}$ have the same sign than $\Tr(\gamma)$ and $\Tr(\gamma^k)$ provided $k$ is odd, so the statement about the sign of $x$ and $P_k(x)$ follows. 

 \end{proof}

\subsection{Proof of Theorem \ref{proba_fondamental}}
 
 Let $T>2$ be large. Consider the sets of matrices of norm less than $T$ such that their trace is in the image the polynomials $X^2+ 2$ and $P_k$ for some integer $\leq 2\sqrt{T}$: 
  $$F_{2,-}^{T}=\{\gamma \in \Gamma_T \, : \exists t\in [1,2\sqrt{T}]\cap \Z, \Tr(\gamma)=t^2+ 2 \},$$  
  $$\mathrm{For} \; k\geq 2, \;F_k^{T}=\{\gamma \in \Gamma_T \, : \exists t\in [3,2\sqrt{T}]\cap \Z, \Tr(\gamma)=P_k(t) \}.$$

Notice that:

\begin{prop} Let $\gamma \in \Gamma$, hyperbolic, such that $|\epsilon_\gamma| \neq \epsilon_\K$ and $\|\gamma\|\leq T$. Then $\gamma$ lies in the set
  $$F_T=\pm \left( F_{2,-}^{T}\cup \bigcup_{2 \leq k \leq 1+\frac{\log T}{\log 2}} F_{k}^{T} \right).$$
  In particular, we have
  $$\mathbb{P}_T( |\Tr(\gamma)|> 2, \, |\epsilon_\gamma|\neq \epsilon_\K)\leq \frac{|F_T|}{|\Gamma_T|} $$
  \end{prop}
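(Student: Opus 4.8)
The plan is to show that the stated hypotheses force $\Tr(\gamma)$ to be the value, at a not-too-large integer, of one of the polynomials $P_k$ ($k\geq 2$) of Lemma~\ref{puissancek} or of $X^2+2$, and then to recognise this as membership in $F_T$. First I would reduce to the case $\Tr(\gamma)>0$: replacing $\gamma$ by $-\gamma$ leaves $\|\gamma\|$, the field $\K=\Q[\gamma]$, the ring $\mathcal{O}_\K$ and the quantity $|\epsilon_\gamma|$ unchanged, so this costs only the $\pm$ already present in the definition of $F_T$. Since $\epsilon_\gamma$ is an algebraic integer whose inverse $\epsilon_\gamma^{-1}=\bar\epsilon_\gamma$ is again an algebraic integer (because $\det\gamma=1$), it is a unit of $\mathcal{O}_\K$; the two eigenvalues being real, of the same sign, and of positive sum, we have $\epsilon_\gamma>1$, so Dirichlet's unit theorem gives $\epsilon_\gamma=\epsilon_\K^{e_\gamma}$ with $e_\gamma\geq 1$, and the hypothesis $|\epsilon_\gamma|\neq\epsilon_\K$ forces $e_\gamma\geq 2$. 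Thus $\Tr(\gamma)=\epsilon_\K^{e_\gamma}+\epsilon_\K^{-e_\gamma}$.

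Next I would split according to the sign of $N_{\K/\Q}(\epsilon_\K)$. If it equals $+1$, then $\bar\epsilon_\K=\epsilon_\K^{-1}$, so $t:=\epsilon_\K+\epsilon_\K^{-1}=\Tr_{\K/\Q}(\epsilon_\K)\in\Z$ with $t>2$ (as $\epsilon_\K>1$), hence $t\geq 3$; applying Lemma~\ref{puissancek} to a matrix of $\SL(2,\C)$ with eigenvalues $\epsilon_\K,\epsilon_\K^{-1}$ yields $\Tr(\gamma)=P_{e_\gamma}(t)$. If $N_{\K/\Q}(\epsilon_\K)=-1$, then $N_{\K/\Q}(\epsilon_\gamma)=\det\gamma=1=(-1)^{e_\gamma}$ forces $e_\gamma=2j$ even; here $s:=\epsilon_\K-\epsilon_\K^{-1}=\Tr_{\K/\Q}(\epsilon_\K)$ is a positive integer and $\epsilon_\K^2+\epsilon_\K^{-2}=s^2+2$, so for $j=1$ we get $\Tr(\gamma)=s^2+2$, while for $j\geq 2$, Lemma~\ref{puissancek} applied to a matrix with eigenvalues $\epsilon_\K^2,\epsilon_\K^{-2}$ (trace $s^2+2\geq 3$) gives $\Tr(\gamma)=P_j(s^2+2)$. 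In all three situations the polynomial is evaluated at an integer which is $\geq 3$ in the $P_k$-cases and $\geq 1$ in the remaining one.

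It then remains to bound these arguments and the exponents. Using $|\Tr(\gamma)|\leq 2\|\gamma\|\leq 2T$ together with the inequality $P_k(x)\geq(x-1)^k$ of Lemma~\ref{puissancek}: in a $P_k$-case the argument $x\geq 3$ satisfies $(x-1)^2\leq P_k(x)=\Tr(\gamma)\leq 2T$, hence $x\leq 1+\sqrt{2T}\leq 2\sqrt{T}$ for $T$ large, while $2^k\leq(x-1)^k\leq 2T$ gives $k\leq 1+\log T/\log 2$; in the $s^2+2$ case, $s^2\leq 2T$ gives $1\leq s\leq 2\sqrt{T}$. Therefore, after the sign reduction, $\gamma$ belongs to $F_{2,-}^T$ or to $F_k^T$ for some $2\leq k\leq 1+\log T/\log 2$; undoing the reduction, the original $\gamma$ lies in $F_T$. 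Since each of these sets is contained in $\Gamma_T$, the event $\{|\Tr(\gamma)|>2,\ |\epsilon_\gamma|\neq\epsilon_\K\}$ is contained in $F_T$, and dividing cardinalities by $|\Gamma_T|$ gives the asserted probability bound.

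The point requiring care is the dichotomy $N_{\K/\Q}(\epsilon_\K)=\pm 1$: when the norm is $-1$ the quantity $\epsilon_\K+\epsilon_\K^{-1}$ need not be an integer, so one must instead use the base trace $\epsilon_\K^2+\epsilon_\K^{-2}=s^2+2$ and observe that $e_\gamma$ is forced to be even --- which is precisely why the family $F_{2,-}^T$ (corresponding to $e_\gamma=2$ in the negative-norm case) has to be included alongside the $F_k^T$. Everything else is a routine verification of the numerical inequalities, valid once $T$ is large.
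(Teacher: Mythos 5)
Your proof is correct and follows essentially the same route as the paper: reduce to $\Tr(\gamma)>0$, split on the sign of $N_{\K/\Q}(\epsilon_\K)$, express $\Tr(\gamma)$ as a value of $X^2+2$ or of some $P_k$ at an integer, and bound the argument and the exponent using $P_k(x)\geq (x-1)^k$. Your handling of the negative-norm case (working with the norm-one unit $\epsilon_\K^2$ of trace $s^2+2$ and applying $P_j$ when $e_\gamma=2j$ with $j\geq 2$) is in fact slightly more careful than the paper's, which writes $\gamma=A^2$ and asserts $\Tr(\gamma)=t^2+2$ without noting that this requires $\det A=-1$; the remaining sub-case lands in $F_2^T$, which is covered by the union either way.
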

  \begin{proof}
  Up to considering $-\gamma$ instead of $\gamma$, we can assume that $\Tr(\gamma)>2$.\\
  
  First consider the case where the integer ring of $\K=\Q[\gamma]$ has a unit with negative norm. So $\epsilon_\gamma$ is necessarily a even power of the fundamental unit, so is a square : there exists $A\in \Q[\gamma]$ in the integer ring of $\K$ with determinant $\pm 1$, such that $A^2=\gamma$. Its trace is an integer since $A$ lies in the ring of integers. We can replace $A$ by $-A$ to insure that $t:= \Tr(A)$ is nonnegative. By Lemma \ref{puissancedeux}, $\Tr(\gamma)=t^2 + 2$. Since $\Tr(\gamma)\leq 2T$, this implies that $t\leq \sqrt{2T-2}\leq 2\sqrt{T}$, so
  $\gamma \in F_{2,-}^{T}$.\\
  
 Now assume that the fundamental unit $\epsilon_\K$ has norm $1$. Since by assumption $\epsilon_\gamma \neq \epsilon_\K$, then $\epsilon_\gamma$ is a $k$-th power of another unit of the integer ring of $\Q[\gamma]$, for some $k\geq 2$. So there exists $A\in \Q[\gamma]$ with integer trace $t=\Tr(A)$ and determinant $1$ such that $A^k=\gamma$, so that
 
$$\Tr(\gamma)=P_k(t).$$

 If $k$ is even, one can replace $A$ by $-A$ and assume that $t\geq 0$. If $k$ is odd, $t$ and $\Tr(\gamma)$ have the same sign so $t\geq 0$ also. In any case, since the matrix $\gamma$ is hyperbolic, so is $A$ and thus $t>2$, so $t\geq 3$. For a upper bound on $t$, using Lemma \ref{puissancek}:
 $$2T\geq \Tr(\gamma)=P_k(t)\geq (t-1)^k,$$
 so
 $$3\leq t \leq \sqrt[k]{2T}+1,$$
which we can bound using the crude estimate $\sqrt[k]{2T}+1\leq 2\sqrt{T}$ for $T>2$ and $k\geq 2$.
Now $T\geq \frac12 (t-1)^k\geq 2^{k-1}$ so 
 $$2\leq k\leq 1+\frac{\log(T)}{\log(2)}.$$

 so $\gamma \in F_{k}^{T}$ with the above bound on $k$. 
 \end{proof}

 We conclude the prove of Theorem \ref{proba_fondamental} by bounding the number of elements in $F_T$. Elements of $F_k^{T}$ have at most $2\sqrt{T}$ different traces. By Lemma \ref{bornetrace}, we have for every $\eta>0$, 
 
 $$|F_{k}^{T}|\leq C_\eta (2\sqrt{T}) T^{1+\eta}= 2 C_\eta T^{3/2+\eta}.$$
Similarly,
 $$|F_{2,-}^{T}|\leq C_\eta (2\sqrt{T}) T^{1+\eta}=
2C_\eta T^{3/2+\eta}.$$
Adding these $2+\log_2(T)$ upper bounds, we get
 $$|F_T|\leq 2(2+\log_2(T)) C_\eta T^{3/2+\eta}$$
 so, decreasing slightly the value of the parameter $\eta$ in order to dismiss the logarithmic term, we get for $T$ large enough,
 $$\mathbb{P}_T( |\Tr(\gamma)|> 2, \, |\epsilon_\gamma|\neq \epsilon_\K)\leq \frac{|F_T|}{|\Gamma_T|} = O_\eta( T^{-\frac12+\eta}).$$
Together with Corollary \ref{proba_hyp}, this concludes the proof of Theorem \ref{proba_fondamental}.

\section{Equidistribution Theorem, and $p$-adic formalism}
\label{section4}
 \subsection{The Uniform Equidistribution Theorem}
 
 Let $\Gamma(N)$ be the congruence subgroup of level $N$, that is the kernel of the surjective reduction map $\pi_N:\SL(2,\Z)\to \SL(2,\Z/N\Z)$. It is classical that elements of $\Gamma_T$ distribute evenly in each class mod $\Gamma(N)$: for any subset $E \subset \SL(2,\Z/N\Z)$,
 $$\lim_{T\to +\infty} \mathbb{P}_T(\pi_N(\gamma) \in E)=\frac{|E|}{[\Gamma:\Gamma(N)]}.$$

  We will need the following refinement, due to Nevo and Sarnak. They showed, using the uniform spectral gap for congruence subgroups, that
\begin{thm}\cite[Theorem 3.2]{Nevo_Sarnak}
\label{equid_GN}
 There exist constants $C>0$, $\beta>0$, $T_0>0$ such that for all integer $N\geq 2$, and all $\gamma_0 \in \Gamma$ and $T\geq T_0$,
$$\left| |\gamma_0 \Gamma(N)\cap \Gamma_T| - \frac{|\Gamma_T|}{[\Gamma:\Gamma(N)]} \right|\leq C |\Gamma_T|^{1-\beta}.$$
\end{thm}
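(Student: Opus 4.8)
The plan is to derive this from the ``mixing/thickening'' method for counting lattice points in $G=\SL(2,\R)$ (in the spirit of Duke--Rudnick--Sarnak and Eskin--McMullen), the uniformity in $N$ being supplied by Selberg's uniform spectral gap for the congruence subgroups $\Gamma(N)$. Write $B_T=\{g\in G:\|g\|\le T\}$ for the Frobenius-norm ball; since $\|g\|^2=2\cosh r(g)$ depends only on the Cartan parameter $r(g)$, $B_T$ is a $K$-bi-invariant ``Cartan ball'' with $\mathrm{vol}(B_T)\asymp T^2$, and such balls are well-rounded: for small $\varepsilon>0$ the $\varepsilon$-neighbourhood $B_T^{+}$ and the $\varepsilon$-core $B_T^{-}$ satisfy $\mathrm{vol}(B_T^{+}\setminus B_T^{-})\ll\varepsilon\,\mathrm{vol}(B_T)$ with an absolute implied constant. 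Because $\gamma_0\Gamma(N)\subset\Gamma$ and $\|\gamma_0\lambda\|\le T$ is equivalent to $\lambda\in\gamma_0^{-1}B_T$, we have $|\gamma_0\Gamma(N)\cap\Gamma_T|=\#\big(\Gamma(N)\cap\gamma_0^{-1}B_T\big)$, i.e. the value at the point $\Gamma(N)\gamma_0^{-1}\in\Gamma(N)\backslash G$ of the automorphic counting function $g\mapsto\sum_{\lambda\in\Gamma(N)}\mathbf{1}_{B_T}(g\lambda)$. The expected main term is $\mathrm{vol}(B_T)/\mathrm{vol}(\Gamma(N)\backslash G)$, and since the classical ($N=1$) effective count gives $|\Gamma_T|=\mathrm{vol}(B_T)/\mathrm{vol}(\Gamma\backslash G)+O(T^{2-\beta_0})$, dividing by $[\Gamma:\Gamma(N)]=\mathrm{vol}(\Gamma(N)\backslash G)/\mathrm{vol}(\Gamma\backslash G)$ shows that replacing $|\Gamma_T|/[\Gamma:\Gamma(N)]$ by $\mathrm{vol}(B_T)/\mathrm{vol}(\Gamma(N)\backslash G)$ costs only $O(T^{2-\beta_0})$, uniformly in $N$.

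The arithmetic input is Selberg's theorem that $\lambda_1(\Gamma(N)\backslash\mathbb H)\ge\tfrac{3}{16}$ for every $N$. In representation-theoretic terms this says that the right regular representation of $G$ on $L^2_0(\Gamma(N)\backslash G)$ contains no complementary-series component closer to the trivial representation than a fixed $\delta_0>0$, the continuous (Eisenstein) spectrum contributing only parameters of eigenvalue $\ge\tfrac14$ and the residual spectrum being exactly the constants, which produce the main term. Consequently there are a fixed Sobolev norm $\mathcal S$ and constants $C_0,\delta_0>0$ \emph{independent of $N$} such that $|\langle R(g)v,w\rangle|\le C_0\,\|g\|^{-2\delta_0}\,\mathcal S(v)\,\mathcal S(w)$ for all smooth $v,w\in L^2_0(\Gamma(N)\backslash G)$; integrating against $\mathbf{1}_{B_T}$ and using $\int_{B_T}\|g\|^{-2\delta_0}\,dg\ll T^{2-2\delta_0}$ gives $|\langle R(\mathbf{1}_{B_T})v,w\rangle|\ll T^{2-2\delta_0}\,\mathcal S(v)\,\mathcal S(w)$, still uniformly in $N$.

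To assemble: fix a nonnegative bump $\psi_\varepsilon$ on $G$ supported in an $\varepsilon$-ball about $e$ with $\int\psi_\varepsilon=1$ and $\mathcal S(\psi_\varepsilon)\ll\varepsilon^{-A}$, and let $\theta_\varepsilon,\theta'_\varepsilon$ be its push-forwards to $\Gamma(N)\backslash G$ based at $\Gamma(N)e$ and $\Gamma(N)\gamma_0^{-1}$ (legitimate when $\varepsilon$ is below the injectivity radius at these $\Gamma$-rational points). The smoothed indicators $\mathbf{1}_{B_T^{-}}\le\mathbf{1}_{B_T}\le\mathbf{1}_{B_T^{+}}$ sandwich $\#(\Gamma(N)\cap\gamma_0^{-1}B_T)$ between $\langle R(\mathbf{1}_{B_T^{\pm}})\theta_\varepsilon,\theta'_\varepsilon\rangle$. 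Writing $\theta_\varepsilon=\tfrac{1}{\mathrm{vol}(\Gamma(N)\backslash G)}\mathbf{1}+\theta_\varepsilon^0$ and likewise for $\theta'_\varepsilon$, the constant parts contribute $\mathrm{vol}(B_T^{\pm})/\mathrm{vol}(\Gamma(N)\backslash G)=\mathrm{vol}(B_T)/\mathrm{vol}(\Gamma(N)\backslash G)+O(\varepsilon T^2/[\Gamma:\Gamma(N)])$, while the three remaining terms are bounded by the matrix-coefficient estimate by $O(\varepsilon^{-A}T^{2-2\delta_0})$. Choosing $\varepsilon=T^{-2\delta_0/(A+1)}$ balances the two error contributions and yields $\#(\Gamma(N)\cap\gamma_0^{-1}B_T)=\mathrm{vol}(B_T)/\mathrm{vol}(\Gamma(N)\backslash G)+O(T^{2-\beta})$ for some $\beta>0$; combined with the first paragraph and $|\Gamma_T|\asymp T^2$, this is exactly the asserted bound, with $C,\beta,T_0$ depending on none of $N,\gamma_0$.

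The main obstacle is precisely this uniformity in $N$, which rests entirely on Selberg's $\tfrac{3}{16}$ bound and on converting it into a clean, $N$-free decay estimate for matrix coefficients with Sobolev control; in particular one must verify that the growing number of cusps of $\Gamma(N)$ does not degrade the estimate, i.e. that the Eisenstein part of the spectral decomposition, once normalised by $\mathrm{vol}(\Gamma(N)\backslash G)$, still obeys the same bound, and one must take some care with the smoothing scale $\varepsilon$ near the cusps. The remaining ingredients — well-roundedness of the Cartan balls, the smoothing sandwich, and the classical effective count for $\Gamma$ itself — are soft and uniform by their nature.
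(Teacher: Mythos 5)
The paper does not prove this statement: it is quoted verbatim as \cite[Theorem 3.2]{Nevo_Sarnak} and used as a black box, so there is no internal proof to compare yours against. What you have written is a correct outline of the standard proof of the cited result, and it is essentially the argument Nevo and Sarnak themselves give (they package the spectral input as a quantitative mean ergodic theorem for the averages over $B_T$ on $L^2_0(\Gamma(N)\backslash G)$, uniform in $N$ by property $(\tau)$/Selberg's $\tfrac{3}{16}$ bound, and then run the well-roundedness/thickening argument; your version phrases the same input directly as uniform decay of matrix coefficients with Sobolev control). The points you flag as requiring care are the right ones: the uniformity of the matrix-coefficient bound in $N$ does not see the growing number of cusps because the bound depends only on the spectral gap and not on multiplicities, and the smoothing scale is legitimate uniformly in $\gamma_0$ because $\Gamma(N)$ is normal in $\Gamma$, so the injectivity radius at $\Gamma(N)\gamma_0^{-1}$ is bounded below by that of $\Gamma$ at the identity coset. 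So your sketch is sound, but for the purposes of this paper the theorem is an imported result rather than something proved here.
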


 \subsection{The $p$-adic formalism}
 
 It will be convenient to rephrase the equiditribution Theorem (without the remainder terms given by Nevo and Sarnak) using the vocabulary of $p$-adic and profinite integers. This will allow to avoid reference to the level $N$ that will be implicitly considered. When $p$ is prime, we denote by $\Z_p$ the ring of $p$-adic integers, and $\hat{\Z}\simeq \prod_p \Z_p$ the ring of profinite integers. Let $\mu_p$ be the Haar measure on $\SL(2,\Z_p)$, normalized as a probability, and $\mu=\otimes_p \mu_p$ the Haar measure on $\SL(2,\hat{\Z})\simeq \prod_p \SL(2,\Z_p)$, by virtue of the Chinese Remainder Theorem. Recall that a clopen set (closed and open set) of this profinite group is nothing else that than the preimage of a set by the reduction mod $N$, for some $N$ depending on the clopen set. The set $\Gamma_T$ can be seen as a subset of $\SL(2,\hat{\Z})$, and the equidistribution Theorem can be rewritten:
 \begin{thm} \label{equid_rephrased} (Equidistribution Theorem mod $N$, rephrased) For any clopen set $E\subset \SL(2,\hat{\Z})$, we have
 $$\lim_{T\to+\infty} \mathbb{P}_T(\gamma \in E)=\mu(E).$$
 \end{thm}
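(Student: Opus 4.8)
The plan is to reduce Theorem \ref{equid_rephrased} to the classical equidistribution statement mod $N$ stated just above it, namely that $\lim_{T\to\infty}\mathbb{P}_T(\pi_N(\gamma)\in E_0)=|E_0|/[\Gamma:\Gamma(N)]$ for every subset $E_0\subset\SL(2,\Z/N\Z)$ (which is itself an immediate consequence of the Nevo--Sarnak Theorem \ref{equid_GN} by writing $E_0$ as a union of points $\gamma_0\Gamma(N)$ and summing, since $|\Gamma_T|^{1-\beta}=o(|\Gamma_T|)$). The only thing to do is to translate the profinite formulation into the finite one using the defining property of clopen sets of $\SL(2,\hat{\Z})$.

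First I would recall that, by definition, a clopen subset $E\subset\SL(2,\hat{\Z})$ is the preimage $E=\pi_N^{-1}(E_0)$ of some subset $E_0\subset\SL(2,\Z/N\Z)$ under the reduction map $\pi_N:\SL(2,\hat{\Z})\to\SL(2,\Z/N\Z)$, for a suitable modulus $N$ (this is the standard fact that the $\Gamma(N)$-cosets, equivalently the preimages under the $\pi_N$'s, form a basis of clopen sets of the profinite completion; it follows from compactness, since $E$ open and closed is a finite union of basic clopen sets, and any finite collection of basic clopen sets is measurable with respect to a common modulus obtained by taking an lcm). Under this identification, the natural embedding $\Gamma_T\hookrightarrow\SL(2,\hat{\Z})$ is compatible with the embedding $\Gamma\hookrightarrow\SL(2,\hat{\Z})$ and with reduction, so the event $\{\gamma\in E\}$ for $\gamma\in\Gamma_T$ is literally the event $\{\pi_N(\gamma)\in E_0\}$, where now $\pi_N:\Gamma\to\SL(2,\Z/N\Z)$ is the usual surjective reduction with kernel $\Gamma(N)$.

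Then I would combine two observations. On the arithmetic side, $\mathbb{P}_T(\gamma\in E)=\mathbb{P}_T(\pi_N(\gamma)\in E_0)\to |E_0|/[\Gamma:\Gamma(N)]$ by the classical equidistribution mod $N$. On the measure side, since $\mu$ is the normalized Haar (hence translation-invariant probability) measure on the compact group $\SL(2,\hat{\Z})$ and $E=\pi_N^{-1}(E_0)$ is a union of $|E_0|$ cosets of the finite-index open subgroup $\pi_N^{-1}(\{I\})=\ker(\pi_N)$, each coset having $\mu$-measure $1/[\SL(2,\hat{\Z}):\ker\pi_N]=1/[\Gamma:\Gamma(N)]$ (the index being the same because $\pi_N$ restricted to $\Gamma$ is already surjective onto $\SL(2,\Z/N\Z)$ with kernel $\Gamma(N)$), we get $\mu(E)=|E_0|/[\Gamma:\Gamma(N)]$. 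The two right-hand sides agree, which is exactly the claim.

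There is essentially no obstacle here: the content is entirely in the Nevo--Sarnak input (already granted as Theorem \ref{equid_GN}) and in the bookkeeping of the clopen-set/modulus correspondence. The one point that deserves a sentence of care is the compactness argument identifying an arbitrary clopen set with a single $\pi_N^{-1}(E_0)$ for one fixed $N$ — one must note that a clopen set is compact, hence a finite union of basic clopen sets $\pi_{N_i}^{-1}(\{a_i\})$, and then take $N=\operatorname{lcm}(N_i)$ so that all of them, and therefore their union, are pulled back from level $N$. Once that is in place the proof is a two-line computation, which is presumably why the author states it without proof in the excerpt; I would include just the short justification above for completeness.
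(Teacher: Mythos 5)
Your proposal is correct and follows exactly the route the paper intends: the paper treats this theorem as an immediate rephrasing, noting only that a clopen set is the preimage of a subset of $\SL(2,\Z/N\Z)$ for some $N$ and then invoking the classical equidistribution mod $N$. Your write-up simply fills in the routine details (compactness to get a single level $N$, surjectivity of $\pi_N$ on $\Gamma$, and the Haar-measure computation) that the author leaves implicit.
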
  

\section{Proof of Theorem \ref{maintheorem}}
\label{section5}

 In this section, we prove Theorem \ref{maintheorem}, assuming
three facts that will be proved later.
\subsection{The three facts} \label{3fact} These are:
 \begin{enumerate}
 \item When $p$ is a prime number, the $p$-adic valuations $v_p(u_\gamma),v_p(f_\gamma)$ can be determined by looking at the reduction of $\gamma$ in $\SL(2,\Z/p^k\Z)$ for sufficiently large $k\geq 1$. More precisely, there is a partition of the $p$-adic matrices of trace $\neq \pm2$ into non-empty clopen sets:
 $$\SL(2,\Z_p)-\Tr^{-1}(\pm 2)= \bigsqcup_{\alpha,\beta\geq 0} E_{p,\alpha,\beta},$$
 such that for any hyperbolic matrix $\gamma$,
 \begin{equation}\label{charac_Epab}
 (v_p(u_\gamma)=\alpha,v_p(f_\gamma)=\beta) \Leftrightarrow \gamma \in E_{p,\alpha,\beta}.
 \end{equation}
 Moreover, the set of matrices of trace $\pm 2$ in $\SL(2,\Z_p)$ has zero $\mu_p$-measure.\\  
 
 \item The following product over all primes is convergent:
 $$\prod_p \mu_p(E_{p,0,0})=\frac{7}{12}\prod_{p\geq 3}\left(1 -\frac{2(p^2+p-1)}{p^2(p^2-1)}\right)>0.$$

 \item For any given $\epsilon>0$, with probability $1-\epsilon$, the product $u_\gamma f_\gamma$ does not have any large prime factors $>K(\epsilon)$, for some function $K(\epsilon)$. This means that for $T$ large enough,
$$\mathbb{P}_T\left(u_\gamma=\prod_{p\leq K(\epsilon)} p^{v_p(u_\gamma)}, 
f_\gamma=\prod_{p\leq K(\epsilon)} p^{v_p(f_\gamma)}\right)\geq 1-\epsilon.$$ 
 
 \end{enumerate}
 These three facts and the Chinese Remainder Theorem imply that to recover $u_\gamma$ and $f_\gamma$, it is (with  probability close to $1$) sufficient to look at the reduction mod $N$, for a suitable  $N$.  The first and second facts are easy and treated in sections 6 and 7, but involve some lengthy elementary counting to get the formulas for the local factors $\mu_p(E_{p,\alpha,\beta})$, especially in the case $p=2$ which is relegated to the appendix. The third one is the crucial step, and uses the spectral gap for congruence subgroups as expressed in Nevo and Sarnak's Theorem \cite{Nevo_Sarnak}, and insures the tightness of the sequence of law of $(u_\gamma,f_\gamma)$ as $T\to+\infty$. This will be done in section 8.
 
\subsection{Proof of Theorem \ref{maintheorem}}
 Assuming the three above facts, we now prove the Theorem. Given $n,m$ positive integers, our goal is to show that for some $q_{n,m}>0$ and for all $\epsilon>0$, when $T$ is large enough,
 $$\left|\mathbb{P}_T(u_\gamma=n,f_\gamma=m)-q_{n,m}\right|\leq 3\epsilon.$$ 
 
Let $K$ be larger than any prime factor of $nm$, such that $K\geq K(\epsilon)$ and moreover
\begin{equation}\label{approx_product}   
  \left| \prod_{p\leq K}\mu_p(E_{p,v_p(n),v_p(m)})-\prod_{p}\mu_p(E_{p,v_p(n),v_p(m)})\right|\leq \epsilon.
  \end{equation}
  This is possible because the probabilities $\mu_p(E_{p,\alpha,\beta})$ are between $0$ and $1$, so the finite product decreases to its limit as $K\to \infty$. Moreover, the non-divergence of the product $\prod_p \mu_p(E_{p,0,0})$ tells us that the limit  
  $$q_{n,m}:=\prod_p \mu_p(E_{p,v_p(n),v_p(m)}),$$
  is positive, as long as all the individual factors are nonzero. This latter property will obvious from their explicit computation in Propositions \ref{formula_for_muE} and \ref{i_hate_two}.\\

 Let 
 $$G_{n,m,K}=\left(\prod_{p\leq K} E_{p,v_p(n),v_p(m)} \right)\times \prod_{p>K} \SL(2,\Z_p) \subset \SL(2, \hat{\Z}).$$
 This is a clopen set, so by the equidistribution Theorem \ref{equid_rephrased},
$$\lim_{ T\to+\infty}  \mathbb{P}_T(\gamma \in G_{n,m,K})=\mu(G_{n,m,K})=\prod_{p\leq K}\mu_p(E_{p,v_p(n),v_p(m)}).$$
In other words, by the characterization (\ref{charac_Epab}) of $E_{p,\alpha,\beta}$,
$$\lim_{ T\to+\infty} \mathbb{P}_T \left(\forall p\leq K, v_p(u_\gamma)=v_p(n), v_p(f_\gamma)=v_p(m) \right)
=\mu(G_{n,m,K}),$$
so for all $T$ large enough,
$$\left|\mathbb{P}_T \left(\forall p\leq K, v_p(u_\gamma)=v_p(n), v_p(f_\gamma)=v_p(m) \right)-
\mu(G_{n,m,K})\right|\leq \epsilon.$$
But since $u_\gamma f_\gamma$ has no large prime factor $p\geq K$ with probability $1-\epsilon$, for $T$ large enough
$$\mathbb{P}_T\left(u_\gamma=\prod_{p\leq K} p^{v_p(u_\gamma)}, 
f_\gamma=\prod_{p\leq K} p^{v_p(f_\gamma)}\right)\geq 1-\epsilon.$$
Since we chose $K$ to be larger than any prime factor of $nm$, we have $n=\prod_{p\leq K} p^{v_p(n)}$ and similarly for $m$. The last two inequalities imply:
$$ \left| \mathbb{P}_T \left(u_\gamma=n, f_\gamma=m \right)-\mu(G_{n,m,K})\right| \leq 2\epsilon.$$

 Recall that $K$ was chosen in order to satisfy Inequality (\ref{approx_product}), which can be rewritten:
  $$|\mu(G_{n,m,K})-q_{n,m}|\leq \epsilon,$$
  so we get for all $T$ large enough,
  $$\left| \mathbb{P}_T \left(u_\gamma=n, f_\gamma=m \right) -q_{n,m}\right|\leq 3\epsilon,$$
 which concludes the proof of the convergence part of Theorem \ref{maintheorem}. Now, we wish to check that $(q_{n,m})_{n,m\geq 1}$ is a probability. Since for fixed $p$, the sets $(E_{p,\alpha,\beta})_{\alpha,\beta\geq 0}$ partition $\SL(2,\Z_p)$ modulo a set of zero $\mu_p$-measure, we have
 $$\sum_{\alpha,\beta \geq 0} \mu_p(E_{p,\alpha,\beta})=1.$$
 A sequence $(\alpha_p)_p$ of integers indexed by prime numbers is almost-zero (a.z.) if $\alpha_p=0$ for sufficiently large $p$. With this notation, because of the convergence of $\prod_{p} \mu_p(E_{p,0,0})$, it follows that for any sequences $(\alpha_p)_p,(\beta_p)_p\geq 0$, the product  $\prod_{p} \mu_p(E_{p,\alpha_p,\beta_p})$ is nonzero if and only if both sequences $(\alpha_p)_p,(\beta_p)_p$ are almost-zero. This implies the formal identities: 
$$1=\prod_p \left( \sum_{\alpha,\beta \geq 0} \mu_p(E_{p,\alpha,\beta}) \right)=\sum_{(\alpha_p)_p,(\beta_p)_p \, a.z.}
\prod_p \mu_p(E_{p,\alpha_p,\beta_p})=\sum_{n,m\geq 1} q_{n,m}.$$

\section{The indexes $u_\gamma$, $f_\gamma$ through reduction mod $p^k$}
\label{section6}

\subsection{The $p$-adic valuations of $u_\gamma$, $p\neq 2$}
 
 Here we express the $p$-adic valuations of $u_\gamma$ and $f_\gamma$ in terms of congruence conditions on the matrix $\gamma$, and define the clopen sets $E_{p,\alpha,\beta}$, for an odd prime $p$. A useful convention is that any congruence condition mod $p^0$ is automatically satisfied. The case of the prime $p=2$ is essentially similar, but more complicated so is deferred to appendix \ref{appendixA}.
 
 \begin{lem}\label{ugamma}
  Let $\gamma \in \Gamma$ be hyperbolic, and let $p$ be prime. The $p$-adic valuation $v_p(u_\gamma)$ of $u_\gamma$ is the largest integer $k$ such that $\gamma \; \mathrm{mod} \; p^k$ is a scalar matrix. In particular,
 for $p\neq 2$, 
 $$v_p(u_\gamma)=\sup \left\{k\geq 0 \; : \; \gamma = \pm I_2 \; \mathrm{mod} \; p^k \right\}.$$

 \end{lem}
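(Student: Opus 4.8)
The plan is to recall from Lemma \ref{Deltareduit} that $u_\gamma = \gcd(c, d-a, b)$ when $\gamma = \left(\begin{smallmatrix} a & b \\ c & d\end{smallmatrix}\right)$, and to translate the statement ``$\gamma \equiv \lambda I_2 \bmod p^k$ for some scalar $\lambda$'' into the divisibility conditions $p^k \mid b$, $p^k \mid c$, and $p^k \mid d-a$. These three conditions together say exactly that $p^k \mid \gcd(b, c, d-a) = u_\gamma$, i.e. $k \le v_p(u_\gamma)$. Hence the largest $k$ for which $\gamma \bmod p^k$ is scalar is precisely $v_p(u_\gamma)$, which is the first assertion. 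For the ``in particular'' clause, I would observe that if $\gamma \equiv \lambda I_2 \bmod p^k$ then taking determinants gives $\lambda^2 \equiv \det(\gamma) = 1 \bmod p^k$, so $\lambda \equiv \pm 1 \bmod p^k$; when $p \neq 2$ the two residues $+1$ and $-1$ are distinct and $(\Z/p^k\Z)^\times$ has no other square roots of $1$ (since $\Z/p^k\Z$ is a local ring with residue field of odd characteristic, hence $X^2-1$ has only the two roots $\pm 1$), so $\gamma \equiv \pm I_2 \bmod p^k$. Conversely $\gamma \equiv \pm I_2$ is obviously a scalar reduction, which gives the displayed formula.

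The only genuinely delicate point is the uniqueness of square roots of $1$ modulo $p^k$ for odd $p$: one should note that $(\Z/p^k\Z)^\times$ is cyclic of even order $p^{k-1}(p-1)$ for $p$ odd and $k \ge 1$, so it has exactly two elements of order dividing $2$, namely $\pm 1$. (For $p=2$ this fails — e.g. $1, 3, 5, 7$ all square to $1$ mod $8$ — which is exactly why the ``in particular'' statement is restricted to $p \neq 2$ and the prime $2$ is handled separately in Appendix \ref{appendixA}.) Everything else is a direct unwinding of definitions together with Lemma \ref{Deltareduit}, so I do not anticipate any further obstacle; the main work is simply to phrase the scalar-reduction condition cleanly in terms of the entries of $\gamma$ and invoke the $\gcd$ formula.
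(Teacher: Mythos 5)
Your argument is correct and matches the paper's proof in substance: both reduce the scalar-reduction condition to the divisibility $p^k \mid b,c,d-a$ and invoke Lemma \ref{Deltareduit} ($u_\gamma = \gcd(c,d-a,b)$), and both reduce the ``in particular'' clause to the fact that the only scalar matrices in $\SL(2,\Z/p^k\Z)$ for $p$ odd are $\pm I_2$. Your extra justification via cyclicity of $(\Z/p^k\Z)^\times$ merely fills in a detail the paper leaves implicit.
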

 \begin{proof}
  We have 
   $$\gamma \equiv \left( \begin{array}{cc} a & b\\ c& a+(d-a)  \end{array} \right) \equiv \left( \begin{array}{cc} a & 0\\ 0& a  \end{array} \right) \, \; \mathrm{mod} \; u_\gamma,$$
 so in particular $\gamma \; \mathrm{mod} \; p^{v_p(u_\gamma)}$ is scalar. Reciprocally, if $\gamma \; \mathrm{mod} \; p^k$ is scalar for some $k\geq 1$, this means that $p^k$ divides the three coefficients $b,c,d-a$, whose gcd is $u_\gamma$, so $k\leq v_p(u_\gamma)$. Now we remark that if $p\neq 2$ and $k\geq 1$, there are exactly two scalar matrices in $\SL(2,\Z/p^k\Z)$, namely $\pm I_2$.
 \end{proof} 

 We note for later that being scalar mod $p^k$ implies that the trace is $\pm 2$  mod $p^{2k}$:

\begin{lem}\label{pn2separation}
 Let $k\geq 1$ and $p$ prime.  If $\gamma \in \Gamma$ is such that
 $$\gamma=\pm I_2 \; \mathrm{mod} \; p^{k},$$
then 
$$\Tr(\gamma)= \pm 2 \; \mathrm{mod} \; p^{2k}.$$ 

\end{lem}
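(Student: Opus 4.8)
The plan is to turn the congruence into a matrix identity and then use the determinant condition $\det\gamma=1$. Write the hypothesis as $\gamma=\varepsilon I_2+p^k M$ for some $\varepsilon\in\{+1,-1\}$ and some $M\in M(2,\Z)$.

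First I would recall the elementary identity for $2\times 2$ matrices, $\det(aI_2+B)=a^2+a\Tr(B)+\det(B)$, and apply it with $a=\varepsilon$ and $B=p^k M$, using $\varepsilon^2=1$:
$$1=\det(\gamma)=\varepsilon^2+\varepsilon p^k\Tr(M)+p^{2k}\det(M)=1+\varepsilon p^k\Tr(M)+p^{2k}\det(M).$$
Hence $\varepsilon p^k\Tr(M)+p^{2k}\det(M)=0$, and dividing by $p^k$ gives $\Tr(M)=-\varepsilon p^k\det(M)$; in particular $p^k\mid\Tr(M)$.

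Then, since $\Tr(\gamma)=\Tr(\varepsilon I_2+p^k M)=2\varepsilon+p^k\Tr(M)$ and $p^k\mid\Tr(M)$, we get $\Tr(\gamma)\equiv 2\varepsilon\pmod{p^{2k}}$, which is exactly the assertion (with the sign on the right equal to the sign on the left, a shade stronger than the stated version). I do not expect any real obstacle: the only point requiring a little care is writing down the correct expansion of $\det(\varepsilon I_2+p^k M)$, after which the argument is a one-line computation and, importantly, works uniformly for every prime $p$, including $p=2$.
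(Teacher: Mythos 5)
Your proof is correct and follows essentially the same route as the paper: write $\gamma$ as $\pm I_2$ plus a $p^k$-multiple of an integer matrix, expand the determinant condition to deduce that the trace of the perturbation is divisible by $p^k$, and conclude. The only (cosmetic) difference is that you use the identity $\det(aI_2+B)=a^2+a\Tr(B)+\det(B)$ in closed form where the paper expands in coordinates.
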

\begin{proof}
 We may write $\gamma$ as
\begin{equation} \label{expgabcd1}
\gamma=\epsilon\left( I_2 +p^k \begin{pmatrix}a & b \\ c & d \end{pmatrix} \right),
\end{equation}
where $a,b,c,d$ are integers, $\epsilon=\pm 1$,. Since $\gamma$ has determinant $1$, we have
$$ (a+d)p^k +(ad-bc)p^{2k}=0.$$
We factor $p^k$, and reduce modulo $p^k$:
$$a+d =0 \; \mathrm{mod} \; p^{k},$$
Now returning to Equation (\ref{expgabcd1}) and taking the trace, we get
\begin{align*}
\Tr (\gamma) & =\epsilon (2+(a+d)p^k)   \\
& =\pm 2 \; \mathrm{mod} \; p^{2k}.  
\end{align*}
\end{proof}

\subsection{The $p$-adic valuations of $u_\gamma f_\gamma$, $p\neq 2$}

 \begin{lem} \label{ufgamma}
 If $p\neq 2$ is prime, then 
 $$v_p(u_\gamma f_\gamma)=\sup \{k\geq 0 \; : \; \Tr(\gamma)=\pm 2 \; \mathrm{mod} \; p^{2k} \}.$$ 
 \end{lem}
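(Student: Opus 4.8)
Recall that $m := u_\gamma f_\gamma$ is the largest positive integer such that $\Tr(\gamma)^2 - 4 = m^2 \Delta_\gamma$ with $\Delta_\gamma$ a discriminant (i.e. $\Delta_\gamma \equiv 0$ or $1 \bmod 4$). So the plan is to translate the defining equation $\Tr(\gamma)^2 - 4 = m^2 \Delta_\gamma$ into a $p$-adic statement about $\Tr(\gamma) \mp 2$. Write $t = \Tr(\gamma)$, so $t^2 - 4 = (t-2)(t+2)$. Since $t \equiv \pm 2 \bmod p$ is equivalent to $p \mid t^2 - 4$, and for $p \neq 2$ at most one of $t-2$, $t+2$ is divisible by $p$ (their difference is $4$, a unit mod $p$), we get $v_p(t^2-4) = v_p(t - 2)$ if $t \equiv 2 \bmod p$, $v_p(t^2-4) = v_p(t+2)$ if $t \equiv -2 \bmod p$, and $v_p(t^2-4) = 0$ otherwise. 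Combining, $v_p(t^2 - 4) = \sup\{ 2\ell : t \equiv \pm 2 \bmod p^{2\ell}\} \vee \sup\{2\ell+1 : \dots\}$ — more precisely $v_p(t^2-4)$ is exactly the largest $j \geq 0$ with $t \equiv \pm 2 \bmod p^{j}$ (only one sign can occur for $j \geq 1$).

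Next I relate $v_p(m)$ to $v_p(t^2 - 4)$. From $t^2 - 4 = m^2 \Delta_\gamma$ we have $v_p(t^2-4) = 2 v_p(m) + v_p(\Delta_\gamma)$. Since $p \neq 2$, a discriminant $\Delta_\gamma \equiv 0,1 \bmod 4$ has $v_p(\Delta_\gamma) \in \{0, 1\}$: indeed $\Delta_\gamma = f_\gamma^2 \Delta_\K$ and $\Delta_\K$ is a fundamental discriminant, squarefree away from $2$, so $v_p(\Delta_\K) \leq 1$; but one must be careful — I actually want $v_p(\Delta_\gamma)$ itself, and $\Delta_\gamma$ need not be squarefree at $p$ because of the $f_\gamma^2$ factor. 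The cleaner route: recall $\Delta_\gamma = u_\gamma'^{-2}(t^2-4)$ where by the factorization $t^2 - 4 = m^2 D$ with $D$ squarefree, $\Delta_\K \in \{D, 4D\}$, $f_\gamma^2 \Delta_\K = \Delta_\gamma$, $u_\gamma f_\gamma = m$. For $p \neq 2$: $v_p(D) \leq 1$, hence $v_p(m^2 D) = 2 v_p(m) + v_p(D)$, so the parity of $v_p(t^2-4)$ determines $v_p(D) \in \{0,1\}$ and then $v_p(m) = \lfloor v_p(t^2-4)/2 \rfloor$.

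Finally I assemble: $v_p(m) = \lfloor v_p(t^2-4)/2 \rfloor = \lfloor \tfrac{1}{2}\sup\{ j \geq 0 : t \equiv \pm 2 \bmod p^{j}\}\rfloor = \sup\{ k \geq 0 : t \equiv \pm 2 \bmod p^{2k}\}$, which is exactly the claimed formula. Here the last equality is the elementary observation that $\lfloor N/2 \rfloor = \sup\{k : 2k \leq N\}$ together with the fact that "$t \equiv \pm 2 \bmod p^j$" is monotone decreasing in $j$, so the supremum over even exponents of $t \equiv \pm 2$ is $\lfloor (\text{largest valid } j)/2 \rfloor$. I would also note that Lemma \ref{pn2separation} already guarantees consistency in one direction: being scalar (hence $\pm I_2$) mod $p^k$ forces $t \equiv \pm 2 \bmod p^{2k}$, matching $v_p(u_\gamma) \leq v_p(m)$ via Lemma \ref{ugamma}.

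The main obstacle is purely bookkeeping: keeping straight that for $p \neq 2$ exactly one of $t-2, t+2$ carries all the $p$-divisibility (so no cancellation of exponents between the two factors), and that a squarefree $D$ contributes valuation at most $1$ at $p$, so $v_p(m)$ is recovered by halving and rounding down. There is no analytic or equidistribution input here — it is entirely local and elementary — and the reason $p = 2$ is excluded is precisely that both $t-2$ and $t+2$ are then even and $v_2(D)$ can be larger, which is why that case is deferred to Appendix \ref{appendixA}.
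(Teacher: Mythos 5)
Your proof is correct and follows essentially the same route as the paper: write $\Tr(\gamma)^2-4=(u_\gamma f_\gamma)^2\Delta_\K$ with $v_p(\Delta_\K)=v_p(D)\in\{0,1\}$ for $p$ odd, observe that $\gcd(\Tr(\gamma)-2,\Tr(\gamma)+2)$ divides $4$ so only one factor carries the $p$-valuation, and conclude $v_p(u_\gamma f_\gamma)=\lfloor v_p(\Tr(\gamma)^2-4)/2\rfloor$. The only blemish is the slip $\Delta_\gamma$ for $\Delta_\K$ in your opening sentence, which you correct yourself before it does any harm.
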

\begin{proof} Let $D$ be the square-free positive integer such that $\K=\Q(\sqrt{D})$, and recall that
$$\Delta_\K=\begin{cases}
 4D &\; \mathrm{if} \; D\equiv 2,3 \, \mathrm{mod} \; 4,\\
 D & \; \mathrm{if} \; D\equiv 1 \, \mathrm{mod} \; 4. 
\end{cases}
$$
 Since $\Tr(\gamma)^2-4=(u_\gamma f_\gamma)^2\Delta_\K$, and $p\neq 2$,
 $$v_p(\Tr(\gamma)^2-4)=\begin{cases}
  1+2v_p(u_\gamma f_\gamma) & \; \mathrm{if} \; D\equiv 0 \, \mathrm{mod} \; p,\\
  2v_p(u_\gamma f_\gamma) & \; \mathrm{if} \; D\not\equiv 0 \, \mathrm{mod} \; p.
 \end{cases}
 $$
But $\Tr(\gamma)^2-4=(\Tr(\gamma)-2)(\Tr(\gamma)+2)$ where 
$\Tr(\gamma)+2$ and $\Tr(\gamma)-2$ have a gcd dividing $4$, so at least one the two terms $\Tr(\gamma)\pm 2$ is not divisible by $p\neq 2$. It follows that:  
$$v_p(\Tr(\gamma)^2-4)=\max(v_p(\Tr(\gamma)-2),v_p(\Tr(\gamma)+2)),$$
so $v_p(u_\gamma f_\gamma)$ is the largest integer $k$ such that one of the two numbers $\Tr(\gamma)\pm 2$ is divisible by $p^{2k}$.

\end{proof} 
 
 We now define for $p\neq 2$ and $\alpha,\beta\geq 0$ the set 
 $$F_{p,\alpha,\beta}=\left\{\gamma \in \SL(2,\Z_p) \, : \, \gamma = \pm I_2 \; \mathrm{mod} \; p^\alpha, \; \Tr(\gamma)= \pm 2  \; \mathrm{mod} \; p^{2\beta} \right\}.$$

 With the convention on congruences mod $p^0$, we have $F_{p,0,0}=\SL(2,\Z_p)$. Clearly, this set is defined mod $p^{\max(\alpha,2\beta)}$ so is clopen in $\SL(2,\Z_p)$. Theses sets are decreasing in both variables: if $\alpha'\geq \alpha, \beta'\geq \beta$, then $F_{p,\alpha',\beta'} \subset F_{p,\alpha,\beta}$. By the Lemmata \ref{ugamma} and \ref{ufgamma}, for any hyperbolic matrix $\gamma \in \Gamma$,
 $$v_p(u_\gamma)\geq \alpha, v_p(u_\gamma f_\gamma)\geq \beta \Leftrightarrow \gamma \in  F_{p,\alpha,\beta}.$$
 Define the clopen set
 $$H_{p,\alpha,\beta}=F_{p,\alpha,\beta}-F_{p,\alpha+1,\beta},$$
 so we have for any hyperbolic matrix $\gamma \in \Gamma$
 $$v_p(u_\gamma)= \alpha, v_p(f_\gamma)\geq \beta-\alpha \Leftrightarrow \gamma \in H_{p,\alpha,\beta}.$$
 Notice that $H_{p,\alpha,\beta'}\subset H_{p,\alpha,\beta}$ if $\beta'\geq \beta$.

 Now for $i\geq 0,j\geq 0$, we define the (still clopen) sets

$$E_{p,\alpha,\beta}=H_{p,\alpha,\alpha+\beta}-H_{p,\alpha,\alpha+\beta+1},$$
 so this time
 $$v_p(u_\gamma)= \alpha, v_p(f_\gamma)= \beta \Leftrightarrow \gamma \in E_{p,\alpha,\beta}.$$
 The sets $(E_{p,\alpha,\beta})_{\alpha,\beta\geq 0}$ are disjoint: if fact we could have defined them directly as the set of matrices $\gamma \in \SL(2,\Z_p)$ such that $\gamma$ is congruent to $\pm I_2$ mod $p^\alpha$ but not $p^{\alpha+1}$, and $\Tr(\gamma)$ is congruent to $\pm 2$ mod $p^{2(\alpha+\beta)}$ but not mod $p^{2(\alpha+\beta)+2}$. But it will be more convenient to decompose them like this for counting purposes. Matrices $\gamma \in \SL(2,\Z_p)$ that do not belong to any $E_{p,\alpha,\beta}$ are either equal to $\pm I_2$, or such that their trace is $\pm 2$. Since the first possibility also imply the second, we have:

  $$\SL(2,\Z_p)-\Tr^{-1}(\pm 2)= \bigsqcup_{\alpha,\beta \geq 0} E_{p,\alpha,\beta},$$
as claimed.  
 
\section{Counting matrices in $\SL(2,\Z/p^k\Z)$}\label{section7}

 In this section, we count in an elementary fashion the number of matrices in $\SL(2,\Z/p^k\Z)$ satisfying some congruence condition, for an odd prime $p$. The objectives are:
 \begin{itemize}
  \item To obtain formulas for the $p$-adic measures $\mu_p(E_{p,\alpha,\beta})$ that are involved in the definition of $q_{n,m}$. The first step will be to compute $\mu_p(F_{p,\alpha,\beta})$ for $p\neq 2$. 
 \item To check that the set of matrices of trace $\pm2$ in $\SL(2,\Z_p)$ is of zero $\mu_p$ measure, which is the last unproven claim of the  first fact of section \ref{3fact}. This is a corollary of the previous step, as one can check using the formulas obtained that 
 $$\sum_{\alpha,\beta\geq 0} \mu_p(E_{p,\alpha,\beta})=1.$$
  \item To check the second claimed fact of section \ref{3fact}, that is the convergence of the infinite product
  $$q_{1,1}=\prod_p \mu_p(E_{p,0,0}).$$
  \item To obtain the formulas for $\sum_{m\geq 1} q_{1,m}$, $\sum_{n\geq 1} q_{n,1}$ appearing in corollaries \ref{cor1} and \ref{cor2}.
 \end{itemize}

 Again, the bothersome case of the prime $p=2$ is treated in the appendix.

\subsection{Preliminaries}

We recall the following:
 \begin{lem} \label{cardinalsl2}
Let $p$ be a prime number, and $k\geq 1$. Then
$$\left| {\bf SL}(2,\Z/p^k\Z)  \right|=p^{3k-2}(p^2-1).$$
\end{lem}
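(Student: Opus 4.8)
The plan is to count matrices $\left(\begin{smallmatrix} a & b \\ c & d\end{smallmatrix}\right) \in M(2,\Z/p^k\Z)$ with $ad - bc \equiv 1 \pmod{p^k}$ by conditioning on the pair $(a,b)$ and counting, for each such pair, the number of valid completions $(c,d)$. The key dichotomy is whether at least one of $a,b$ is a unit mod $p$ or whether both are divisible by $p$.

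First I would handle the "good" case: if $\gcd(a,b,p) = 1$, i.e. $a$ or $b$ is invertible mod $p$, then the linear equation $ad - bc \equiv 1 \pmod{p^k}$ in the two unknowns $(c,d)$ has exactly $p^k$ solutions. Indeed, if $a$ is a unit then for every choice of $c$ (there are $p^k$ of them) the value $d \equiv a^{-1}(1 + bc)$ is uniquely determined; symmetrically if $b$ is a unit. The number of pairs $(a,b) \in (\Z/p^k\Z)^2$ with $a$ or $b$ a unit mod $p$ is $p^{2k} - p^{2(k-1)}$, since the complement (both $a,b \equiv 0 \pmod p$) has size $p^{2(k-1)}$. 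So the good case contributes $(p^{2k} - p^{2k-2}) \cdot p^k$ solutions.

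Next I would treat the "bad" case $a \equiv b \equiv 0 \pmod p$. Write $a = p a'$, $b = p b'$; the equation becomes $p(a'd - b'c) \equiv 1 \pmod{p^k}$, which forces $p \mid 1$, impossible when $k \geq 1$. Hence the bad case contributes nothing, and we get $|\SL(2,\Z/p^k\Z)| = (p^{2k} - p^{2k-2}) p^k = p^{3k-2}(p^2-1)$, as claimed. (One could alternatively set this up via the surjectivity and fiber-counting of the reduction map $\SL(2,\Z/p^k\Z) \to \SL(2,\Z/p\Z)$ together with $|\SL(2,\F_p)| = p(p^2-1)$, but the direct count above is self-contained and elementary, matching the stated aim of the section.)

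There is no real obstacle here; the only thing to be slightly careful about is the solution count of the single congruence $ad - bc \equiv 1 \pmod{p^k}$ in $(c,d)$ when exactly one of $a,b$ is a unit versus when both are — but in all subcases where $\gcd(a,b,p)=1$ the answer is uniformly $p^k$, because a linear equation $\alpha x + \beta y \equiv \gamma \pmod{p^k}$ with $\gcd(\alpha,\beta,p^k)=1$ always has exactly $p^k$ solutions $(x,y)$. Since the proof of Lemma~\ref{cardinalsl2} is classical (and only recalled here), I would likely just cite a standard reference or give the two-line argument above.
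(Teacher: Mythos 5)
Your count is correct, and the paper in fact gives no proof at all: Lemma~\ref{cardinalsl2} is introduced with ``We recall the following,'' treating the formula as a standard fact. Your direct elementary argument --- conditioning on whether $(a,b)$ reduces to $(0,0)$ mod $p$, noting that the linear congruence $ad-bc\equiv 1\pmod{p^k}$ in $(c,d)$ has exactly $p^k$ solutions when one of $a,b$ is a unit and none when $p\mid a,b$ --- is a clean self-contained proof, and the alternative via fibers of the reduction $\SL(2,\Z/p^k\Z)\to\SL(2,\mathbb{F}_p)$ that you mention is the other standard route. Either would serve; there is nothing to correct.
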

 

\begin{lem} \label{quadric}
 Let $p$ be a prime number and $k\geq 1$. The number of points $(X,Y,Z) \in (\Z/p^k\Z)^3$ on the quadric
$$X^2+YZ=0,$$
is
$$N_{p,k}=
p^{2k}+p^{2k-1}-p^{\floor{\frac{3k-1}2}}.$$
 
\end{lem}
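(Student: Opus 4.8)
\textbf{Proof plan for Lemma \ref{quadric}.}

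The plan is to count points on $X^2 + YZ = 0$ in $(\Z/p^k\Z)^3$ by stratifying according to the $p$-adic valuation of the variables, reducing everything to counting squares and solving linear congruences. First I would record two elementary sub-counts. (i) For fixed $X = x$, the equation becomes $YZ = -x^2$, so I need the number of pairs $(Y,Z) \in (\Z/p^k\Z)^2$ with $YZ \equiv -x^2 \pmod{p^k}$; writing $v = v_p(x^2) = 2v_p(x)$ truncated at $k$, this count depends only on $\min(v,k)$: if $2v_p(x) \geq k$ it is the number of pairs with $YZ \equiv 0$, and otherwise it is $(v+1)p^{k-1}(p-1) \cdot$ (a correction) — more cleanly, the number of $(Y,Z)$ with $YZ \equiv c \pmod{p^k}$ equals $\sum_{j=0}^{w} \#\{Y : v_p(Y) = j\}$ for $w = \min(v_p(c),k)$, which after summing the geometric-type pieces is a closed form in $w$. (ii) I would then sum over $x \in \Z/p^k\Z$, grouping by $i = v_p(x)$: there are $p^{k-i}(p - 1)$ values of $x$ with $v_p(x) = i$ for $0 \le i < k$, plus the single value $x = 0$ (or rather $p^0 \cdot 1$ times the $i = k$ stratum if one prefers), each contributing the sub-count from (i) with $w = \min(2i, k)$.

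Carrying this out, the total $N_{p,k}$ becomes a finite sum $\sum_{i} (\text{number of } x \text{ with } v_p(x)=i)\cdot g(\min(2i,k))$ where $g$ is the $YZ$-count. The sum splits at the threshold $i_0 = \lceil k/2 \rceil$ (where $2i$ first reaches or exceeds $k$). For $i \ge i_0$ the contribution is $g(k) = N'_{p,k}$, the number of solutions of $YZ \equiv 0$, times $\#\{x : v_p(x) \ge i_0\} = p^{k - i_0}$; for $i < i_0$ one gets a geometric sum in $i$ that telescopes. I expect the bookkeeping to produce exactly the two ``main terms'' $p^{2k} + p^{2k-1}$ together with a single negative error term, and the exponent $\lfloor (3k-1)/2 \rfloor$ will emerge from the threshold index: roughly, the leftover is $p^{k-1}(p-1)$ times a sum ending around $i_0$, and $(k - i_0) + (k-1) + \lceil k/2 \rceil$-type arithmetic collapses to $\lfloor (3k-1)/2 \rfloor$ after treating $k$ even and $k$ odd separately.

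An alternative, perhaps cleaner, route is a direct induction on $k$: lift solutions mod $p^{k-1}$ to solutions mod $p^k$ via Hensel-type counting, where the number of lifts of a point depends on whether the gradient $(2X, Z, Y)$ vanishes mod $p$ — i.e. on whether $p \mid X$, $p \mid Y$, $p \mid Z$ (using $p$ odd so $2$ is a unit). Smooth points (gradient nonzero mod $p$) lift in exactly $p^2$ ways; singular points (all of $X,Y,Z \equiv 0 \bmod p$, forcing $X^2 + YZ \equiv 0 \bmod p^2$ automatically) lift differently, and these singular points are in bijection with points of $(X')^2 + Y'Z' \equiv 0 \pmod{p^{k-2}}$ after dividing by $p$, giving a recursion $N_{p,k} = p^2\big(N_{p,k-1} - N_{p,k-2}\big) + p^2 N_{p,k-2}\cdot(\text{lift factor})$ that one solves against the ansatz. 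I would then verify the base cases $k = 1$ (the cone over $\F_p$, which has $p^2$ points, matching $p^2 + p - p = p^2$) and $k = 2$.

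\textbf{Main obstacle.} The genuine difficulty is purely combinatorial: tracking the valuation strata without off-by-one errors and matching the error exponent $\lfloor (3k-1)/2 \rfloor$ in both parity cases of $k$. The geometry (a split quadric cone, smooth away from the vertex, $2$ a unit since $p$ is odd) is entirely standard; the work is in the careful summation. The cleanest presentation is likely the induction, where the only subtlety is correctly counting how singular points of the mod-$p^k$ equation correspond to \emph{all} points — not just nonzero ones — of the mod-$p^{k-2}$ equation, and handling the resulting shift in $k$.
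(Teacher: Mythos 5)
Your first route is essentially the paper's own argument with the roles of the variables swapped: the paper stratifies by $m=v_p(Y)$, writes $Y=p^mV$ with $V$ a unit, observes $2v_p(X)\ge m$ so $X=p^{\lceil m/2\rceil}U$, and then $Z$ is \emph{determined} mod $p^{k-m}$, so the inner count is immediate and the outer sum telescopes (separately over even and odd $m$). Fixing $X$ first, as you propose, forces you to carry the auxiliary count $\#\{(Y,Z):YZ\equiv c \bmod p^k\}$ — which equals $(w+1)p^{k-1}(p-1)$ for $w=v_p(c)<k$ and $kp^{k-1}(p-1)+p^k$ for $c\equiv 0$ — and the outer sum over $i=v_p(x)$ becomes arithmetico-geometric (terms of the shape $(2i+1)p^{-i}$) rather than telescoping; it does close up to the stated formula, but the paper's choice of which variable to stratify by is what makes the bookkeeping painless. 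Your induction route is genuinely different and potentially cleaner, but the recursion you sketch is garbled; the correct one is $N_{p,k}=p^{2(k-1)}(p^2-1)+p^3N_{p,k-2}$ for $k\ge 2$ (the $p^2-1$ smooth points of the cone mod $p$ each lift to $p^{2(k-1)}$ solutions mod $p^k$, while solutions reducing to the vertex are in $p^3$-to-one correspondence with solutions mod $p^{k-2}$ after dividing each coordinate by $p$), with $N_{p,0}=1$ and $N_{p,1}=p^2$; this recursion verifies the closed form directly.

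Two caveats. First, the lemma is stated for all primes and is actually applied with $p=2$ in Appendix A, so you cannot assume ``$2$ is a unit''; fortunately you do not need to: the only singular point of the cone mod $p$ is the origin for every $p$, since $Y\equiv Z\equiv 0$ already forces $X^2\equiv 0$ and hence $X\equiv 0$. Second, as written the proposal defers the entire derivation of the exponent $\lfloor(3k-1)/2\rfloor$ to ``I expect the bookkeeping to produce\dots''; since that computation is the whole content of the lemma, what you have is a viable plan rather than a proof until one of the two routes is carried through to the closed form.
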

We will need the following:
\begin{proof}
 We count the solutions in an elementary fashion. Given $m\geq 0$, denote by $N_{p,k,m}$ the number of solutions such that $v_p(Y)=m$. Notice first that in the case where $Y=0$, then $X^2=0$ necessarily so $v_p(X)\geq \ceil{k/2}$. Thus if $r=\ceil{k/2}$, $X$ can be written 
 $X=p^rU$ with $U\in \Z/p^{k-\ceil{k/2}}\Z$ arbitrary, and $Z$ can be chosen arbitrarily. Thus
 $$N_{p,k,\infty}=p^{2k-\ceil{k/2}}.$$
 We now assume that $0\leq m \leq k-1$ i.e $Y\neq 0$.
 
 If $(X,Y,Z)$ is a solution, then from $X^2=-YZ$ we get that $2v_p(X)\geq v_p(Y) =m$. Let $r=\ceil{m/2}$ be the least integer larger than $m/2$, thus if $(X,Y,Z)$ is a solution with $v_p(Y)=m$, then there exists $U \in \Z/p^{k-r}\Z$, $V\in (\Z/p^{k-m}\Z)^*$ such that 
 $$X=p^r U, \; Y=p^m V$$
  We can rewrite the equation $X^2+YZ=0$ as
 $$p^{2r}U^2=-p^mVZ \; mod \; p^k,$$
 equivalently
 $$Z =  -p^{2r-m}U^2 V^{-1} \; mod \; p^{k-m},$$
 since $V$ has an inverse mod $p^{k-m}$. Thus given any pair $(U,V) \in (\Z/p^{k-r}\Z)\times(\Z/p^{k-m}\Z)^*$, there are exactly $p^m$ values of $Z\in \Z/p^k\Z$ such that $(X=p^r U,Y=p^m V,Z)$ is a solution to $X^2+YZ=0$. We have obtained that
 $$N_{p,k,m}=p^{k-r}(p^{k-m}-p^{k-m-1}) p^m=
 p^{2k-\ceil{m/2}}-p^{2k-\ceil{m/2}-1}$$
 
 Summing over $m$ gives
 $$N_{p,k}=N_{p,k,\infty}+\sum_{m=0}^{k-1} N_{p,k,m},$$
 Notice that the subsequence for even terms is telescopic, so we get
 $$\sum_{m=2n\leq k-1} N_{p,k,m}=p^{2k} - p^{2k-\floor{(k-1)/2}-1} ,$$
 because if $m=2n$ is even, then $\ceil{m/2}=n$ and the sum runs from $n=0$ to $n=\floor{(k-1)/2}$.
 Similarly for odd terms
 $$\sum_{m=2n-1\leq k-1} N_{p,k,m}=p^{2k-1}  - p^{2k-\floor{k/2}-1},$$
 since if $m=2n-1$ is odd, then $\ceil{m/2}=n$, and the sum runs from $n=1$ to $n=\floor{k/2}$. Note also that in the case where the sum is over an empty set, i.e. $\floor{k/2}<1$, then $k=1$ and the above formula is still valid in this case. 
  Thus,
  $$N_{p,k}=p^{2k}+p^{2k-1}+p^{2k-\ceil{k/2}}-p^{2k-\floor{(k-1)/2}-1}-p ^{2k-\floor{k/2}-1}.$$
  Two of the last three terms cancel out according to the parity of $k$. In both cases, we can write;
  $$N_{p,k}=p^{2k}+p^{2k-1}-p^{\floor{\frac{3k-1}2}}.$$
\end{proof}

\begin{lem} \label{countingf}
Let $\alpha\geq 0,\beta\geq 0$ with $(\alpha,\beta)\neq (0,0)$, and $p$ an odd prime.
The number of matrices $\gamma$ in ${\bf SL}(2,\Z/p^{2(\alpha+\beta)}\Z)$ such that 
$$
\begin{cases}
\gamma = \pm I_2 \; \mathrm{mod} \; p^{\alpha}, \\
\Tr(\gamma)= \pm 2 \; \mathrm{mod} \; p^{2(\alpha+\beta)}
\end{cases}
$$
is $$2p^{3\alpha}(p^{4\beta}+p^{4\beta-1}-p^{3\beta-1}),$$
\end{lem}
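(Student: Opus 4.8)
\emph{Proof plan.} The strategy is to strip off the sign ambiguity and then parametrize the matrices congruent to $+I_2$ so that the remaining count becomes exactly the quadric count of Lemma \ref{quadric}. Write $n=\alpha+\beta$, so we work modulo $p^{2n}$.

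\emph{Step 1: reduce to one sign.} A matrix $\gamma$ counted in the statement satisfies $\gamma\equiv I_2$ or $\gamma\equiv -I_2\pmod{p^\alpha}$; since $p$ is odd and $\alpha\le 2n$, these are mutually exclusive when $\alpha\ge 1$ and both vacuous when $\alpha=0$. Moreover, if $\gamma\equiv I_2\pmod{p^\alpha}$ then $\Tr(\gamma)\equiv 2\pmod{p^\alpha}$, which rules out $\Tr(\gamma)\equiv -2\pmod{p^{2n}}$ (that would force $p^\alpha\mid 4$); similarly for the other sign. The involution $\gamma\mapsto -\gamma$ swaps the two sign choices together with the two trace conditions, so the total count equals $2N$, where $N$ is the number of $\gamma\in\SL(2,\Z/p^{2n}\Z)$ with $\gamma\equiv I_2\pmod{p^\alpha}$ and $\Tr(\gamma)\equiv 2\pmod{p^{2n}}$.

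\emph{Step 2: parametrize and translate the conditions.} I would write $\gamma=I_2+p^\alpha M$ with $M=\left(\begin{smallmatrix} a & b\\ c & d\end{smallmatrix}\right)$ ranging over $M(2,\Z/p^{2n-\alpha}\Z)$, noting $2n-\alpha=\alpha+2\beta$. Using $\det(I_2+p^\alpha M)=1+p^\alpha(a+d)+p^{2\alpha}(ad-bc)$, the trace condition $\Tr(\gamma)\equiv 2\pmod{p^{2n}}$ reads $p^\alpha(a+d)\equiv 0\pmod{p^{2n}}$, i.e. $d\equiv -a\pmod{p^{\alpha+2\beta}}$. Granting this, the term $p^\alpha(a+d)$ already vanishes mod $p^{2n}$, so the determinant condition collapses to $ad-bc\equiv 0\pmod{p^{2n-2\alpha}}$, that is (since $2n-2\alpha=2\beta$ and $d=-a$) to $a^2+bc\equiv 0\pmod{p^{2\beta}}$.

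\emph{Step 3: count.} Here $d$ is determined by $a$, and $(a,b,c)$ ranges over $(\Z/p^{\alpha+2\beta}\Z)^3$ subject to a congruence that only constrains the residues mod $p^{2\beta}$. Splitting each of $a,b,c$ into its residue mod $p^{2\beta}$ and its complementary part in $\Z/p^\alpha\Z$, the complementary parts are free, giving $p^{3\alpha}$ choices, while the residues $(a_0,b_0,c_0)$ must lie on the quadric $X^2+YZ=0$ over $\Z/p^{2\beta}\Z$, of which there are $N_{p,2\beta}=p^{4\beta}+p^{4\beta-1}-p^{3\beta-1}$ by Lemma \ref{quadric} when $\beta\ge 1$ (when $\beta=0$ this number is $1$ and the formula still evaluates to $1$). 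Hence $N=p^{3\alpha}(p^{4\beta}+p^{4\beta-1}-p^{3\beta-1})$ and the total count is $2N$, as claimed. The only slightly delicate part is the modulus bookkeeping of Step 2 — keeping straight that $M$ is defined mod $p^{\alpha+2\beta}$ whereas the determinant constraint, once the trace constraint is imposed, only sees $p^{2\beta}$ — but after that is set up correctly the quadric lemma does all the remaining work, so there is no genuine obstacle.
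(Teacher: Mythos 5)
Your proof is correct and follows essentially the same route as the paper's: reduce to the $+$ sign by the involution $\gamma\mapsto-\gamma$ (factor of $2$), parametrize the matrices congruent to $I_2$ mod $p^\alpha$ by a triple $(a,b,c)\in(\Z/p^{\alpha+2\beta}\Z)^3$ after using the trace condition to eliminate $d$, reduce the determinant condition to the quadric $a^2+bc\equiv 0\bmod p^{2\beta}$, and invoke Lemma \ref{quadric} together with the $p^{3\alpha}$ free lifts. The only cosmetic difference is that the paper writes the diagonal entries as $1\pm p^\alpha a$ from the outset (so the trace condition is built into the parametrization), whereas you start from a general $M$ and then impose $d\equiv -a$; this is the same computation presented in a slightly different order.
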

\begin{proof}
First of all, since $\Tr(I_2)=+2$, if $\alpha\geq 1$, the sign must be the same in the two defining congruences. If $\alpha=0$ then the first congruence does not matter. So it is enough to count the matrices such that
$$
\begin{cases}
\gamma = + I_2 \; \mathrm{mod} \; p^{\alpha}, \\
\Tr(\gamma)= + 2 \; \mathrm{mod} \; p^{2(\alpha+\beta)}
\end{cases}
$$
and multiply the result by $2$, as $-\gamma$ satisfy opposite congruences, and $-2\neq +2 \; \mathrm{mod} \; p^{2(\alpha+\beta)}$ since $p$ is odd and $\alpha+\beta\geq 1$. \\

We can write any matrix in in ${\bf SL}(2,\Z/p^{2(\alpha+\beta)}\Z)$ satisfying the above congruences as
$$\gamma=\begin{pmatrix} 
1+p^\alpha a & p^\alpha b\\
p^\alpha c & 1 - p^\alpha a
\end{pmatrix},$$
where $(a,b,c) \in \Z/p^{\alpha+2\beta}\Z$. Given such $(a,b,c)$, the matrix above is in $\SL(2,\Z/p^{2(\alpha+\beta)}\Z)$ iff
$$(1-p^{\alpha}a)(1+p^{\alpha}a)-p^{2\alpha}bc=1 \; \mathrm{mod} \; p^{2(\alpha+\beta)},$$
that is
\begin{equation} \label{quadmodp}
a^2+bc=0 \; \mathrm{mod} \; p^{2\beta}.
\end{equation}
By Lemma \ref{quadric}, this quadric has $p^{4\beta}+p^{4\beta-1}-p^{3\beta-1}$ points in $\Z/p^{2\beta}\Z$. Each element of $\Z/p^{2\beta}\Z$ has $p^\alpha$ lifts in $\Z/p^{\alpha+2\beta}\Z$, so there are $p^{3\alpha}(p^{4\beta}+p^{4\beta-1}-p^{3\beta-1})$ triplets $(a,b,c)$ in $\Z/p^{\alpha+2\beta}\Z$ satisfying (\ref{quadmodp}). 
\end{proof}

\subsection{Measure of $E_{p,\alpha,\beta}$, $p\neq 2$}
Recall that we defined for odd primes $p$:
$$F_{p,\alpha,\beta}=\left\{\gamma \in \SL(2,\Z_p) \, : \, \gamma = \pm I_2 \; \mathrm{mod} \; p^\alpha, \; \Tr(\gamma)= \pm 2  \; \mathrm{mod} \; p^{2\beta} \right\}.$$

\begin{lem}\label{formula_mu_f_p}
 Let $p\neq 2$ be prime, and $\alpha\geq 0$, $\beta\geq \alpha$ be integers, not both zero.
$$\mu_p(F_{p,\alpha,\beta})=
\begin{cases}
1 & \; \mathrm{if} \; (\alpha,\beta)=(0,0), \\
\dfrac{2}{p^{3\alpha-2}(p^2-1)}  &\; \mathrm{if} \; (\alpha,\beta)\neq (0,0) , \; \beta\leq \alpha,  \\
\dfrac{2(p^{\beta-\alpha+1}+p^{\beta-\alpha}-1)}{p^{3\beta-1}(p^2-1)} &\; \mathrm{if} \; (\alpha,\beta)\neq (0,0) , \; \beta\geq \alpha.
\end{cases}
$$
\end{lem}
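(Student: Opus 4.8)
\textbf{Proof plan for Lemma \ref{formula_mu_f_p}.}

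The plan is to reduce the computation of the Haar measure $\mu_p(F_{p,\alpha,\beta})$ to counting matrices in a fixed finite quotient $\SL(2,\Z/p^{2(\alpha+\beta)}\Z)$. First I would observe that the defining congruences for $F_{p,\alpha,\beta}$ only involve the residue of $\gamma$ modulo $p^{\max(\alpha,2\beta)}$, and since we are assuming $\beta\ge\alpha$ (so $2\beta\ge\alpha$) and $(\alpha,\beta)\ne(0,0)$, this modulus divides $p^{2(\alpha+\beta)}$. Hence $F_{p,\alpha,\beta}$ is the preimage under the reduction map $\SL(2,\Z_p)\to\SL(2,\Z/p^{2(\alpha+\beta)}\Z)$ of a finite set $S$, and by definition of the normalized Haar measure,
\[
\mu_p(F_{p,\alpha,\beta})=\frac{|S|}{|\SL(2,\Z/p^{2(\alpha+\beta)}\Z)|}=\frac{|S|}{p^{3\cdot 2(\alpha+\beta)-2}(p^2-1)},
\]
using Lemma \ref{cardinalsl2}.

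Next I would identify $|S|$ with the count from Lemma \ref{countingf}. The set $S$ consists of those $\gamma\in\SL(2,\Z/p^{2(\alpha+\beta)}\Z)$ with $\gamma\equiv\pm I_2\bmod p^\alpha$ and $\Tr(\gamma)\equiv\pm 2\bmod p^{2\beta}$. I need the trace congruence modulo $p^{2\beta}$ rather than modulo $p^{2(\alpha+\beta)}$ as in Lemma \ref{countingf}, so there is a small reconciliation step: when $\gamma\equiv\pm I_2\bmod p^\alpha$ we may write $\gamma=\pm(I_2+p^\alpha M)$, whence $\Tr(\gamma)=\pm(2+p^\alpha\Tr(M))$, so $\Tr(\gamma)\equiv\pm 2\bmod p^{2\beta}$ is equivalent to $\Tr(M)\equiv 0 \bmod p^{2\beta-\alpha}$, i.e.\ to a condition modulo $p^{2\beta-\alpha}$ on the ``reduced'' variables; combined with the determinant relation $a^2+bc\equiv 0$ this still matches the structure of the proof of Lemma \ref{countingf} (which in effect imposes the quadric condition modulo $p^{2\beta}$ on variables with $p^\alpha$ free lifts each, giving the stated count $2p^{3\alpha}(p^{4\beta}+p^{4\beta-1}-p^{3\beta-1})$, valid whenever $\beta\ge\alpha$ --- one should double-check that the hypothesis $(\alpha,\beta)\neq(0,0)$ of that lemma together with $\beta\ge\alpha$ is exactly what is needed, and that the sign bookkeeping there, using that $p$ is odd so $+2\not\equiv-2$, is correct). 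Plugging $|S|=2p^{3\alpha}(p^{4\beta}+p^{4\beta-1}-p^{3\beta-1})$ into the displayed fraction and simplifying the power of $p$ gives
\[
\mu_p(F_{p,\alpha,\beta})=\frac{2p^{3\alpha}(p^{4\beta}+p^{4\beta-1}-p^{3\beta-1})}{p^{6\alpha+6\beta-2}(p^2-1)}=\frac{2(p^{\beta-\alpha+1}+p^{\beta-\alpha}-p^{-\alpha}\cdot p^{\ldots})}{\,\cdot\,},
\]
and after collecting exponents one arrives at $\dfrac{2(p^{\beta-\alpha+1}+p^{\beta-\alpha}-1)}{p^{3\beta-1}(p^2-1)}$, matching the third case. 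For the second case, $\beta\le\alpha$ with $(\alpha,\beta)\neq(0,0)$: here $\max(\alpha,2\beta)$ could be $\alpha$ (if $\alpha\ge 2\beta$) so the trace condition mod $p^{2\beta}$ is implied by (or at least interacts with) $\gamma\equiv\pm I_2\bmod p^\alpha$; by Lemma \ref{pn2separation}, $\gamma\equiv\pm I_2\bmod p^\beta$ already forces $\Tr(\gamma)\equiv\pm 2\bmod p^{2\beta}$, so for $\beta\le\alpha$ the trace constraint is automatic and $F_{p,\alpha,\beta}=F_{p,\alpha,0}=\{\gamma\equiv\pm I_2\bmod p^\alpha\}$, whose measure is $2/|\SL(2,\Z/p^\alpha\Z)|=2/(p^{3\alpha-2}(p^2-1))$ by Lemma \ref{cardinalsl2} (the factor $2$ being the two scalar matrices $\pm I_2$, distinct since $p$ is odd). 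Finally the case $(\alpha,\beta)=(0,0)$ gives $F_{p,0,0}=\SL(2,\Z_p)$ of measure $1$ by convention on $p^0$.

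I expect the only genuine subtlety --- hardly an obstacle --- to be the careful matching of moduli between the statement here (trace constraint modulo $p^{2\beta}$) and Lemma \ref{countingf} (trace constraint modulo $p^{2(\alpha+\beta)}$), together with verifying that the overlap case $\beta=\alpha$ is covered consistently by both the second and third formulas; a quick check shows that at $\beta=\alpha$ the third formula gives $\dfrac{2(p+1-1)}{p^{3\alpha-1}(p^2-1)}=\dfrac{2p}{p^{3\alpha-1}(p^2-1)}=\dfrac{2}{p^{3\alpha-2}(p^2-1)}$, agreeing with the second, so the two cases are compatible on their common boundary. Everything else is the routine arithmetic of combining Lemmata \ref{cardinalsl2}, \ref{quadric}, \ref{countingf}, and \ref{pn2separation} and simplifying a ratio of powers of $p$.
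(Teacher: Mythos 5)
Your overall strategy is the same as the paper's — reduce $\mu_p(F_{p,\alpha,\beta})$ to a count in a finite quotient, invoke Lemma \ref{countingf}, dispose of $\beta\le\alpha$ via Lemma \ref{pn2separation}, and check consistency at $\beta=\alpha$ — and those surrounding parts are correct. But the main case $\alpha>0$, $\beta>\alpha$ contains a genuine gap. The set $S\subset \SL(2,\Z/p^{2(\alpha+\beta)}\Z)$ you define carries the trace condition modulo $p^{2\beta}$, whereas Lemma \ref{countingf} with parameters $(\alpha,\beta)$ counts matrices whose trace is $\pm2$ modulo $p^{2(\alpha+\beta)}$, a strictly stronger condition when $\alpha>0$. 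So $|S|$ is \emph{not} $2p^{3\alpha}(p^{4\beta}+p^{4\beta-1}-p^{3\beta-1})$; your ``reconciliation step'' observes the mismatch of moduli but then simply asserts that the count of Lemma \ref{countingf} still applies, which it does not. Concretely, for $p=3$, $\alpha=\beta=1$ your ratio is $\frac{2\cdot 3^{3}(3^4+3^3-3^2)}{|\SL(2,\Z/81\Z)|}=\frac{5346}{472392}\approx 0.0113$, while $F_{3,1,1}=F_{3,1,0}$ has measure $2/24=1/12$. Correspondingly, the algebraic simplification you elide with ``$\ldots$'' fails: $\frac{2p^{3\alpha}(p^{4\beta}+p^{4\beta-1}-p^{3\beta-1})}{p^{6(\alpha+\beta)-2}(p^2-1)}$ equals the target $\frac{2(p^{\beta-\alpha+1}+p^{\beta-\alpha}-1)}{p^{3\beta-1}(p^2-1)}$ only when $\alpha=0$.

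The repair is exactly the paper's choice of level. Since $\beta\ge\alpha$ and $(\alpha,\beta)\neq(0,0)$ force $\beta\ge 1$ and $\max(\alpha,2\beta)=2\beta$, the set $F_{p,\alpha,\beta}$ is determined modulo $p^{2\beta}$; count in $\SL(2,\Z/p^{2\beta}\Z)$ and apply Lemma \ref{countingf} with parameters $(\alpha,\beta-\alpha)$, so that the ambient level and the trace modulus both equal $p^{2\beta}$. This gives $2p^{3\alpha}\bigl(p^{4(\beta-\alpha)}+p^{4(\beta-\alpha)-1}-p^{3(\beta-\alpha)-1}\bigr)$ matrices out of $p^{6\beta-2}(p^2-1)$, and this ratio does simplify to $\frac{2(p^{\beta-\alpha+1}+p^{\beta-\alpha}-1)}{p^{3\beta-1}(p^2-1)}$.
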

\begin{proof}
 For $(\alpha,\beta)=(0,0)$, by convention, $F_{p,0,0}=\SL(2,\Z_p)$ so 
$$\mu_p(F_{p,0,0})=1.$$
 If $\beta\leq \alpha$, by Lemma \ref{pn2separation}, we have $F_{p,\alpha,\beta}=F_{p,\alpha,\alpha}$; note that the two applicable formulas given in the statement agree when $\beta=\alpha$, so it is sufficient to prove the formula when $\beta\geq \alpha$. 

 In this case,
$$\mu_p(F_{p,\alpha,\beta})=\frac{\left| \{ \gamma \in \SL(2,\Z/p^{2\beta}\Z) \, : \, 
\gamma = \pm I_2 \; \mathrm{mod} \; p^{\alpha},  \Tr(\gamma)= \pm 2 \; \mathrm{mod} \; p^{2\beta}\right| }{\left| \SL(2,\Z/p^{2\beta}\Z) \} \right|},
$$
and by Lemmata \ref{cardinalsl2}  and \ref{countingf}, this ratio is equal to
$$\mu_p(F_{p,\alpha,\beta})=\frac{2p^{3\alpha}(p^{4(\beta-\alpha)}+p^{4(\beta-\alpha)-1}-p^{3(\beta-\alpha)-1})}{p^{3\beta-2}(p^2-1).}.$$
\end{proof}

\begin{prop}\label{formula_for_muE}
 For $p\neq 2$, $\alpha\geq 0$, $\beta\geq 0$,
$$
\mu_p(E_{p,\alpha,\beta})=
\begin{cases}
1-\dfrac{2(p^2+p-1)}{p^{2}(p^2-1)} &\; \mathrm{if} \; (\alpha,\beta)= (0,0), \\
\dfrac{2(p^3+p^2-1)}{p^{3\alpha+2}(p+1)} &\; \mathrm{if} \; \alpha>0, \beta= 0, \\
\dfrac{2(p^2-1)}{p^{3\alpha+2\beta+2}} &\; \mathrm{if} \; \beta> 0.
\end{cases}
$$
\end{prop}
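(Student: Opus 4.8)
The plan is to unwind the definitions $E_{p,\alpha,\beta}=H_{p,\alpha,\alpha+\beta}-H_{p,\alpha,\alpha+\beta+1}$ and $H_{p,\alpha,\gamma}=F_{p,\alpha,\gamma}-F_{p,\alpha+1,\gamma}$. The families $(F_{p,\alpha,\beta})$ and $(H_{p,\alpha,\beta})$ are monotone decreasing in the relevant variables (as already noted), so in each set difference the subtracted piece is contained in the larger set and the measures simply subtract. Combining the two identities gives the inclusion--exclusion formula
$$\mu_p(E_{p,\alpha,\beta})=\mu_p(F_{p,\alpha,\alpha+\beta})-\mu_p(F_{p,\alpha+1,\alpha+\beta})-\mu_p(F_{p,\alpha,\alpha+\beta+1})+\mu_p(F_{p,\alpha+1,\alpha+\beta+1}),$$
and it then remains only to substitute the closed form for $\mu_p(F_{p,\cdot,\cdot})$ from Lemma \ref{formula_mu_f_p} and to simplify.

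The one point requiring attention is choosing the correct branch of Lemma \ref{formula_mu_f_p}, which distinguishes the exceptional value $\mu_p(F_{p,0,0})=1$, the branch ``$\beta\le\alpha$'' and the branch ``$\beta\ge\alpha$''. Since $\alpha+\beta\ge\alpha$ always, the terms $F_{p,\alpha,\alpha+\beta}$ and $F_{p,\alpha,\alpha+\beta+1}$ always lie in the ``$\beta\ge\alpha$'' branch (the first being $F_{p,0,0}$ only when $(\alpha,\beta)=(0,0)$). On the other hand $F_{p,\alpha+1,\alpha+\beta}$ has second index $\ge$ first index precisely when $\beta\ge1$; when $\beta=0$ it equals $F_{p,\alpha+1,\alpha}$ and is governed by the ``$\beta\le\alpha$'' branch. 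This trichotomy in $\beta$, together with the degenerate pair $(0,0)$, is exactly the case split appearing in the statement.

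Carrying out the substitution: for $\beta\ge1$ all four $F$-terms fall in the ``$\beta\ge\alpha$'' branch, and a short cancellation gives $\mu_p(H_{p,\alpha,\alpha+\beta})=2p^{-(3\alpha+2\beta)}$, hence $\mu_p(E_{p,\alpha,\beta})=2p^{-(3\alpha+2\beta)}-2p^{-(3\alpha+2\beta+2)}=2(p^2-1)/p^{3\alpha+2\beta+2}$. For $\alpha\ge1$ and $\beta=0$ one gets $\mu_p(H_{p,\alpha,\alpha})=2(p^3-1)/\bigl(p^{3\alpha+1}(p^2-1)\bigr)$ and $\mu_p(H_{p,\alpha,\alpha+1})=2/p^{3\alpha+2}$, whose difference simplifies, via $(p^3+p^2-1)(p-1)=p^4-p^2-p+1$, to $2(p^3+p^2-1)/\bigl(p^{3\alpha+2}(p+1)\bigr)$. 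Finally, for $(\alpha,\beta)=(0,0)$ one uses $\mu_p(F_{p,0,0})=1$ to get $\mu_p(H_{p,0,0})=1-2/\bigl(p(p^2-1)\bigr)$ and $\mu_p(H_{p,0,1})=2/p^2$, so that $\mu_p(E_{p,0,0})=1-2/\bigl(p(p^2-1)\bigr)-2/p^2=1-2(p^2+p-1)/\bigl(p^2(p^2-1)\bigr)$.

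There is no genuine difficulty here: the whole argument is the bookkeeping of which branch of Lemma \ref{formula_mu_f_p} applies to each of the four $F$-terms — the only subtlety being that the degenerate value $\beta=0$ pushes $F_{p,\alpha+1,\alpha}$ into the other branch — followed by routine algebraic simplification.
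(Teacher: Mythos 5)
Your proposal is correct and follows essentially the same route as the paper: the identical inclusion--exclusion over the four sets $F_{p,\alpha,\alpha+\beta}$, $F_{p,\alpha+1,\alpha+\beta}$, $F_{p,\alpha,\alpha+\beta+1}$, $F_{p,\alpha+1,\alpha+\beta+1}$, followed by substitution of Lemma \ref{formula_mu_f_p} with the same case split on $\beta$; all the intermediate values you quote (e.g.\ $\mu_p(H_{p,\alpha,\alpha+\beta})=2p^{-(3\alpha+2\beta)}$ for $\beta\ge 1$) check out. The only cosmetic difference is that in the $\beta=0$ case the paper cancels the pair $F_{p,\alpha+1,\alpha}=F_{p,\alpha+1,\alpha+1}$ outright rather than evaluating both $H$-measures, which is the same arithmetic.
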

\begin{proof} Recall that $H_{p,\alpha,\beta}$ was defined as 
$F_{p,\alpha,\beta}-F_{p,\alpha+1,\beta}$, and that  
$F_{p,\alpha+1,\beta}\subset F_{p,\alpha,\beta}$, so
$$\mu_p(H_{p,\alpha,\beta})=\mu_p(F_{p,\alpha,\beta})-\mu_p(F_{p,\alpha+1,\beta}).$$
Likewise, 
$E_{p,\alpha,\beta}=H_{p,\alpha,\alpha+\beta}-H_{p,\alpha,\alpha+\beta+1}$ where $H_{p,\alpha,\alpha+\beta+1}\subset H_{p,\alpha,\alpha+\beta}$. 
So we get the  inclusion-exclusion-like formula:
\begin{align*}
\mu_p(E_{p,\alpha,\beta}) = &\mu_p(F_{p,\alpha,\alpha+\beta})+\mu_p(F_{p,\alpha+1,\alpha+\beta+1})\\
&-\mu_p(F_{p,\alpha+1,\alpha+\beta})-\mu_p(F_{p,\alpha,\alpha+\beta+1}).
\end{align*}

 We first consider the case where $\beta>0$. Then $\alpha+\beta \geq \alpha+1$ so we can consistently apply the third formula for the terms $\mu_p(F_{p,\alpha',\beta'})$:
\begin{align*}
\mu_p(E_{p,\alpha,\beta})=& 2\Big(
\frac{ p^3(p^{\beta+1}+p^{\beta}-1)+(p^{\beta+1}+p^{\beta}-1)}{p^{3\alpha+3\beta+2}(p^2-1)}  \\
& -\frac{p^3(p^{\beta}+p^{\beta-1}-1)+(p^{\beta+2}+p^{\beta+1}-1)}{p^{3\alpha+3\beta+2}(p^2-1)} \Big),\\
=&\frac{2(p^{\beta+4}-2p^{\beta+2}+p^\beta)} {p^{3\alpha+3\beta+2}(p^2-1)}\\
=& \frac{2(p^2-1)}{p^{3\alpha+2\beta+2}}.
\end{align*}

We now consider the subcase where $\beta=0$. In this case, the two terms
$\mu_p(F_{p,\alpha+1,\alpha+1})$ and $\mu_p(F_{p,\alpha+1,\alpha})$ cancel each other
so the formula reduces to
$$\mu_p(E_{p,\alpha,0})=\mu_p(F_{p,\alpha,\alpha})-\mu_p(F_{p,\alpha,\alpha+1}).$$
If $\alpha=0$ also, we get
$$\mu_p(E_{p,0,0})=1-\mu_p(F_{p,0,1})=1-\frac{2(p^2+p-1)}{p^2(p^2-1)}.$$
If $\alpha>0$ and $\beta=0$ then
\begin{align*}
\mu_p(E_{p,\alpha,0})&=\frac{2}{p^{3\alpha-2}(p^2-1)}-\frac{2(p^2+p-1)}{p^{3\alpha+2}(p^2-1)},\\
&= \frac{2(p^4-p^2-p+1)}{p^{3\alpha+2}(p^2-1)}, \\
&= \frac{2(p^3+p^2-1)}{p^{3\alpha+2}(p+1)}.
\end{align*}

\end{proof}

\subsection{Corollaries}
 Since $\mu_p(E_{p,0,0})=1+O(p^{-2})$, we have obtained the awaited second fact:
\begin{cor} \label{convergence00}
The product $\prod_{p} \mu_p(E_{p,0,0})$ converges.
\end{cor}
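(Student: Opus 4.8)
The plan is to apply the standard criterion for convergence of an infinite product: if $(\epsilon_p)_p$ are real numbers with $1+\epsilon_p>0$ for all $p$ and $\sum_p|\epsilon_p|<\infty$, then $\prod_p(1+\epsilon_p)$ converges to a strictly positive real number. Here we set $\epsilon_p=\mu_p(E_{p,0,0})-1$.

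First I would check the tail estimate. For $p\neq 2$, Proposition \ref{formula_for_muE} gives $\mu_p(E_{p,0,0})=1-\dfrac{2(p^2+p-1)}{p^2(p^2-1)}$, hence $|\epsilon_p|=\dfrac{2(p^2+p-1)}{p^2(p^2-1)}$. Since $\dfrac{p^2+p-1}{p^2-1}\le 2$ for $p\ge 2$ (equivalently $p^2-p-1\ge 0$), one gets the crude bound $|\epsilon_p|\le 4p^{-2}$ for every odd prime $p$, which is the precise meaning of the remark $\mu_p(E_{p,0,0})=1+O(p^{-2})$ preceding the statement. Therefore $\sum_{p\neq 2}|\epsilon_p|\le 4\sum_{p}p^{-2}<\infty$. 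The exceptional prime $p=2$ contributes a single factor, equal to $\tfrac{7}{12}$ by the computation of Appendix \ref{appendixA}; being one finite, nonzero term it has no bearing on the convergence of the product.

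Next I would verify positivity of every factor, so that the limit is nonzero rather than merely nonnegative. For $p\ge 3$ the bound above gives $\mu_p(E_{p,0,0})\ge 1-4p^{-2}>0$, and at $p=2$ the factor $\tfrac{7}{12}$ is positive; hence $1+\epsilon_p>0$ for all $p$. Combining this with $\sum_p|\epsilon_p|<\infty$, the cited criterion yields that $\prod_p\mu_p(E_{p,0,0})$ converges to a positive real number, which is exactly the claim (and moreover justifies setting $q_{1,1}=\prod_p\mu_p(E_{p,0,0})>0$ as used in the second fact of Section \ref{3fact}).

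I do not expect any genuine obstacle here: the argument is a routine comparison with $\sum p^{-2}$. The only mild point of care is that the general formula of Proposition \ref{formula_for_muE} is stated for odd $p$, so the prime $2$ must be handled by the separate finite computation of the appendix; since it is a single factor this does not affect convergence.
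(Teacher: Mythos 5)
Your proof is correct and follows exactly the paper's (one-line) argument: the explicit formula of Proposition \ref{formula_for_muE} gives $\mu_p(E_{p,0,0})=1+O(p^{-2})$ for odd $p$, so the product converges by comparison with $\sum_p p^{-2}$, the prime $2$ contributing only the single finite factor $7/12$. Nothing to add.
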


 We leave to the reader to check that $\sum_{\alpha,\beta} \mu_p(E_{p,\alpha,\beta})=1$, which implies that these sets cover $\SL(2,\Z_p)$ up to a set of measure zero, as claimed.
 Let's find the infinite products appearing in the corollaries \ref{cor1} and \ref{cor2}.
\begin{cor}
$$\sum_{n\geq 1} q_{1,m}=\frac56 \prod_{p>3} \left(1- \frac{2}{p(p^2-1)} \right).$$
\end{cor}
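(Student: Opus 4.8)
The plan is to recognize $\sum_{m\geq 1}q_{1,m}$ as an Euler product whose factor at $p$ is $L_p:=\sum_{\beta\geq 0}\mu_p(E_{p,0,\beta})$, and then to evaluate each $L_p$ by a one-line measure computation rather than by summing the explicit formulas of Proposition~\ref{formula_for_muE} (which has the pleasant side effect of not needing the $p=2$ formulas of Appendix~\ref{appendixA}). First I would note that $v_p(1)=0$ for every prime $p$, so (\ref{qnm}) becomes $q_{1,m}=\prod_p\mu_p(E_{p,0,v_p(m)})$. As $m$ ranges over the positive integers, $(v_p(m))_p$ ranges bijectively over the almost-zero sequences of non-negative integers; moreover the product $\prod_p L_p$ converges, since $1\geq L_p\geq\mu_p(E_{p,0,0})=1+O(p^{-2})$, exactly as for $\prod_p\mu_p(E_{p,0,0})$ in Corollary~\ref{convergence00}. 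Hence the same reordering of an absolutely convergent product carried out at the end of the proof of Theorem~\ref{maintheorem} (there for $\sum_{n,m}q_{n,m}=1$) yields
$$\sum_{m\geq 1}q_{1,m}=\prod_p\Big(\sum_{\beta\geq 0}\mu_p(E_{p,0,\beta})\Big)=\prod_p L_p.$$

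To evaluate $L_p$, I would use that the $E_{p,0,\beta}$ are pairwise disjoint and that, by the characterization (\ref{charac_Epab}) together with the partition $\SL(2,\Z_p)-\Tr^{-1}(\pm 2)=\bigsqcup_{\alpha,\beta\geq 0}E_{p,\alpha,\beta}$, one has $\bigsqcup_{\beta\geq 0}E_{p,0,\beta}=\{\gamma\in\SL(2,\Z_p):\Tr(\gamma)\neq\pm 2,\ v_p(u_\gamma)=0\}$. Since $\{\Tr(\gamma)=\pm 2\}$ is $\mu_p$-null (the last assertion of the first of the three facts of Section~\ref{section5}) and, by Lemma~\ref{ugamma}, the condition $v_p(u_\gamma)=0$ says exactly that $\gamma\bmod p$ is not a scalar matrix, it follows that
$$L_p=\mu_p\big(\{\gamma\in\SL(2,\Z_p):\gamma\bmod p\text{ is not scalar}\}\big)=1-\frac{s_p}{|\SL(2,\Z/p\Z)|},$$
where $s_p$ is the number of scalar matrices in $\SL(2,\Z/p\Z)$: for $p$ odd these are $\pm I_2$, so $s_p=2$, while for $p=2$ only $I_2$ is scalar, so $s_2=1$. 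With $|\SL(2,\Z/p\Z)|=p(p^2-1)$ from Lemma~\ref{cardinalsl2}, this gives $L_2=1-\frac16=\frac56$ and $L_p=1-\frac{2}{p(p^2-1)}$ for $p>2$, whence $\sum_{m\geq 1}q_{1,m}=\frac56\prod_{p>2}\big(1-\frac{2}{p(p^2-1)}\big)$, the value already appearing in Corollary~\ref{cor1}. As a sanity check, for odd $p$ one may instead sum the formulas of Proposition~\ref{formula_for_muE}: $L_p=\mu_p(E_{p,0,0})+\sum_{\beta\geq 1}\frac{2(p^2-1)}{p^{2\beta+2}}=1-\frac{2(p^2+p-1)}{p^2(p^2-1)}+\frac{2}{p^2}=1-\frac{2}{p(p^2-1)}$.

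Everything here is elementary; the only step that merits a word of justification is the interchange of the sum over $m$ with the infinite product, but this is precisely the manipulation already made for $\sum_{n,m}q_{n,m}=1$ in the proof of Theorem~\ref{maintheorem}, so it is not a genuine obstacle. I would also emphasize that this route handles $p=2$ on the same footing as the odd primes — it uses only Lemmata~\ref{ugamma} and~\ref{cardinalsl2}, both valid for all primes — so the computation of the exceptional local factors in Appendix~\ref{appendixA} is not required for this corollary.
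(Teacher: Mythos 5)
Your proof is correct, and its overall architecture --- factor the sum as an Euler product $\prod_p\bigl(\sum_{\beta\geq 0}\mu_p(E_{p,0,\beta})\bigr)$ and then evaluate each local factor --- is the same as the paper's. Where you differ is in the evaluation of the local factor: the paper treats $p=2$ by your method (identifying $\bigsqcup_\beta E_{2,0,\beta}$ with the non-scalar matrices mod $2$ and counting), but for odd $p$ it instead telescopes the sum to a difference of measures of the sets $F_{p,\alpha,\beta}$ and plugs in Lemma \ref{formula_mu_f_p}; you apply the ``count the scalar matrices mod $p$'' computation uniformly to all primes via Lemmata \ref{ugamma} and \ref{cardinalsl2}. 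Your route is slightly cleaner: it avoids Proposition \ref{formula_for_muE} and Appendix \ref{appendixA} entirely, and it sidesteps what appears to be a typo in the paper's displayed identity for odd $p$ (the paper writes $\mu_p(F_{p,0,0})-\mu_p(F_{p,0,1})$, which equals $\mu_p(E_{p,0,0})$, whereas the correct telescoped quantity is $\mu_p(F_{p,0,0})-\mu_p(F_{p,1,0})=1-\tfrac{2}{p(p^2-1)}$, i.e.\ exactly your ``non-scalar mod $p$'' measure). Your justification of the sum--product interchange by the tail bound $1\geq L_p\geq \mu_p(E_{p,0,0})$ and the reordering argument from the proof of Theorem \ref{maintheorem} is also sound; the paper performs this step silently. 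Note finally that the exponent range $p>3$ in the corollary as stated is a typo for $p>2$, consistent with Corollary \ref{cor1} and with what you derive.
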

\begin{proof}
 We have
$$\sum_{m\geq 1} q_{1,m}=\sum_n \prod_p \mu_p(E_{p,0,v_p(m)})=\prod_p \left( \sum_{\beta\geq 0} \mu_p(E_{p,0, \beta}) \right),$$ 
so we can compute first the sum for a fixed $p$. For $p=2$,  $\sqcup_{\beta \geq 0} E_{2,0,\beta}=A_0$ is the set of matrices not congruent to $I_2$ mod $2$, has measure $5/6$ because out of the 6 matrices in $\SL(2,\Z/2\Z)$, only one is scalar (this can also be obtained but summing the expressions given in proposition \ref{i_hate_two}).
For $p>2$,
$$\sum_{\beta\geq 0} \mu_p(E_{p,0, \beta})=\mu_p(F_{p,0,0})-\mu_p(F_{p,0,1}),$$
quantities for which Lemma \ref{formula_mu_f_p} gives formulas.

\end{proof}

\begin{cor}
$$\sum_{n\geq 1} q_{n,1}= \frac{75}{112} \prod_{p>3} \left(1- \frac{2p}{p^3-1} \right).$$
\end{cor}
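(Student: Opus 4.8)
The plan is to mimic exactly the proof of the companion Corollary just stated for $\sum_m q_{1,m}$, but now summing over the first index $n$ while freezing $m=1$. Since $q_{n,1}=\prod_p \mu_p(E_{p,v_p(n),0})$, we have the formal factorization
\begin{align*}
\sum_{n\geq 1} q_{n,1} &= \sum_{n\geq 1}\prod_p \mu_p(E_{p,v_p(n),0}) = \prod_p\left(\sum_{\alpha\geq 0}\mu_p(E_{p,\alpha,0})\right),
\end{align*}
which is justified (as in Section \ref{section5}) by the convergence of $\prod_p\mu_p(E_{p,0,0})$ together with the fact that $\mu_p(E_{p,\alpha,0})=O(p^{-3\alpha})$ makes each inner sum a convergent series very close to $1$. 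So the task reduces to evaluating, for each prime $p$ separately, the local sum $S_p:=\sum_{\alpha\geq 0}\mu_p(E_{p,\alpha,0})$.

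For an odd prime $p$, I would use Proposition \ref{formula_for_muE}: the $\alpha=0$ term contributes $1-\frac{2(p^2+p-1)}{p^2(p^2-1)}$, and for $\alpha\geq 1$ the terms are $\frac{2(p^3+p^2-1)}{p^{3\alpha+2}(p+1)}$, a geometric series in $p^{-3\alpha}$ with ratio $p^{-3}$. Summing the geometric tail gives $\frac{2(p^3+p^2-1)}{p^{5}(p+1)}\cdot\frac{1}{1-p^{-3}} = \frac{2(p^3+p^2-1)}{p^2(p+1)(p^3-1)}$. Adding the $\alpha=0$ term and simplifying (common denominator $p^2(p^2-1)(p^3-1)/\gcd$, then factoring) should collapse to the clean form $1-\frac{2p}{p^3-1}$; this is a routine but slightly delicate algebraic identity that I would verify by clearing denominators and checking the polynomial identity $\big(p^2(p^2-1)-2(p^2+p-1)\big)(p^3-1)+2(p^3+p^2-1)(p-1)\cdot\frac{p^2-1}{p+1}\cdot\frac{1}{?}$ — more simply, one checks $S_p(p^3-1)=p^3-1-2p$ directly. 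For $p=2$, the set $\sqcup_{\alpha\geq 0}E_{2,\alpha,0}$ is the set of hyperbolic matrices whose conductor $f_\gamma$ is odd (i.e.\ $v_2(f_\gamma)=0$), and its measure is computed from the explicit $p=2$ formulas in Proposition \ref{i_hate_two} in the appendix; I would simply quote that $S_2=\tfrac{75}{112}$ (equivalently, $\prod_{p=2}$ contributes the factor $75/112$, which is why that rational number is pulled out front and the Euler product runs only over $p>3$... actually over $p\geq 3$, but since $S_3$ also equals $1-\frac{2\cdot 3}{26}=\frac{20}{26}=\frac{10}{13}$ one can check $\frac{75}{112}\cdot\frac{10}{13}$ vs.\ keeping $p=3$ inside; I would present it in whichever grouping matches the stated $\prod_{p>3}$).

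Assembling, $\sum_{n\geq 1}q_{n,1} = S_2\cdot\prod_{p\geq 3}S_p = \frac{75}{112}\prod_{p>3}\left(1-\frac{2p}{p^3-1}\right)$ once the $p=3$ factor is absorbed into the leading constant, completing the proof. The main obstacle is purely the bookkeeping: getting the $p=2$ local factor $S_2=75/112$ right from the appendix formulas, and performing the algebraic simplification of the odd-prime local sum into the advertised closed form $1-\frac{2p}{p^3-1}$ without sign or exponent errors; everything else (interchange of sum and product, convergence) is already licensed by the machinery set up in Section \ref{section5} and Corollary \ref{convergence00}.
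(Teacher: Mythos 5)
Your proposal is correct and follows essentially the same route as the paper: factor the sum as $\prod_p\bigl(\sum_{\alpha\geq 0}\mu_p(E_{p,\alpha,0})\bigr)$, evaluate the odd-prime local sum from Proposition \ref{formula_for_muE} as a geometric series that collapses to $1-\frac{2p}{p^3-1}$, and read off the $p=2$ factor $75/112$ from Proposition \ref{i_hate_two}. Your hesitation about the range of the Euler product is well founded: since $S_2=75/112$ already accounts for $p=2$ only, the product must run over all primes $p>2$ (as stated in Corollary \ref{cor2} of the introduction), so the ``$p>3$'' in the statement here is a typo rather than a flaw in your argument.
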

\begin{proof} 
 Likewise,
$$\sum_{n\geq 1} q_{n,1}=\prod_p \left( \sum_{\alpha\geq 0} \mu_p(E_{p,\alpha,0}) \right).$$ 
For the prime 2, from Proposition \ref{i_hate_two}, we get by summing over $\alpha$:
$$\mu_2(\sqcup_{\alpha\geq 0} E_{2,\alpha,0})=\frac{75}{112}.$$
For primes $p\geq 3$, we get
$$\sum_{\alpha\geq0} \mu_p(E_{p,\alpha,0})=1-\frac{2(p^2+p-1)}{p^2(p^2-1)}+\frac{2(p^3+p^2-1)}{p^2(p+1)(p^3-1)}=1-\frac{2p}{p^3-1}.$$
\end{proof}

\section{Large prime factors of $u_\gamma f_\gamma$} 

 In this section, we prove the 
 \begin{prop} \label{no_large_prime}
For any $\epsilon>0$, there exist $K(\epsilon)>0$ such that the upper density of the set
 $$M_{K(\epsilon)}=\{ \gamma \in \SL(2,\Z) \, : \; |\Tr(\gamma)|>2, \;  \exists p\geq K(\epsilon) \; \mathrm{prime},  \, p|u_\gamma \; \mathrm{or} \; p|f_\gamma \},$$
 is smaller than $\epsilon$.
 \end{prop}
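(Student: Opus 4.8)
The plan is to control the density of matrices $\gamma \in \Gamma_T$ for which some large prime $p$ divides $u_\gamma f_\gamma$, and then sum the resulting bounds over all $p \geq K$. By Lemmas \ref{ugamma} and \ref{ufgamma} (and their analogues at $p=2$ from the appendix), $p \mid u_\gamma f_\gamma$ precisely when $\Tr(\gamma) \equiv \pm 2 \pmod{p^2}$; in particular, if $p \mid u_\gamma f_\gamma$ then the image $\pi_{p^2}(\gamma)$ lies in the set $T_{p} := \{ g \in \SL(2,\Z/p^2\Z) : \Tr(g) \equiv \pm 2 \pmod{p^2} \}$. So the first step is to estimate $|T_p|$: by the counting in Lemma \ref{countingf} with $\alpha=0, \beta=1$ (which applies for odd $p$; the case $p=2$ costs only a bounded factor and does not affect the sum), the number of matrices in $\SL(2,\Z/p^2\Z)$ of trace congruent to $+2$ is $p^4+p^3-p^2$, hence $|T_p| \leq 2(p^4+p^3-p^2)$, giving a relative density $|T_p|/|\SL(2,\Z/p^2\Z)| = O(1/p^2)$.

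The second step is to pass from the congruence statement to a counting statement for $\Gamma_T$, uniformly in $p$, using the Nevo--Sarnak bound (Theorem \ref{equid_GN}). For each residue class $\gamma_0 \Gamma(p^2)$ with $\pi_{p^2}(\gamma_0) \in T_p$, we have $|\gamma_0\Gamma(p^2) \cap \Gamma_T| \leq |\Gamma_T|/[\Gamma:\Gamma(p^2)] + C|\Gamma_T|^{1-\beta}$. Summing over the $\leq 2(p^4+p^3-p^2)$ such classes, and using $[\Gamma : \Gamma(p^2)] = |\SL(2,\Z/p^2\Z)| = p^6(1-p^{-2})$, we obtain
\begin{equation*}
|\{\gamma \in \Gamma_T : p \mid u_\gamma f_\gamma\}| \leq \frac{C_1 |\Gamma_T|}{p^2} + C_2 p^4 |\Gamma_T|^{1-\beta}
\end{equation*}
for absolute constants $C_1, C_2$. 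The error term is harmless only when $p$ is not too large relative to $T$: if $p \leq T^{\beta/5}$, say, then $p^4 |\Gamma_T|^{1-\beta} = O(|\Gamma_T| \cdot T^{-\beta})$, which is $o(|\Gamma_T|)$ uniformly. Summing over $K \leq p \leq T^{\beta/5}$ gives an upper density contribution $\leq \sum_{p \geq K} C_1/p^2 + o(1)$, and the tail $\sum_{p \geq K} 1/p^2$ can be made $< \epsilon/2$ by choosing $K = K(\epsilon)$ large.

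The third step — which I expect to be the main obstacle — is handling the primes $p > T^{\beta/5}$, where the Nevo--Sarnak error term is useless. For such large $p$, one instead argues directly: if $p \mid u_\gamma f_\gamma$ and $|\Tr(\gamma)| \leq 2T$, then $\Tr(\gamma)^2 - 4$ is a nonzero integer of absolute value $\leq 4T^2$ divisible by $p^2 > T^{2\beta/5}$. Fix the value $t = \Tr(\gamma)$; the number of $t \in [-2T, 2T]$ with $p^2 \mid t^2-4$ for some $p > T^{\beta/5}$ is small — for each such large $p$ there are $O(T/p^2)$ admissible $t$ (two residues mod $p^2$, as $t \equiv \pm 2$), and summing over $p > T^{\beta/5}$ gives $O(T \cdot T^{-2\beta/5}) = o(T)$ admissible traces; then Lemma \ref{bornetrace} bounds the number of $\gamma \in \Gamma_T$ with each such trace by $O(T^{1+\eta})$, so this contributes $O(T^{2 - 2\beta/5 + \eta}) = o(|\Gamma_T|)$. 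Combining the three ranges of $p$ shows the upper density of $M_{K(\epsilon)}$ is at most $\epsilon/2 + o(1) < \epsilon$ for $T$ large, as required. The delicate point is choosing the threshold exponent (here $\beta/5$) so that both the Nevo--Sarnak error in the middle range and the divisor-counting error in the high range are simultaneously $o(|\Gamma_T|)$; any fixed positive exponent small enough relative to $\beta$ works, and one should verify the arithmetic of the exponents carefully.
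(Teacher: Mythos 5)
Your proposal is correct and follows essentially the same route as the paper: the same reduction of $p\mid u_\gamma f_\gamma$ to the congruence $\Tr(\gamma)\equiv\pm2\bmod p^2$, the same split at a threshold $T^{\alpha}$ with $\alpha$ small relative to the spectral-gap exponent $\beta$, Nevo--Sarnak plus the coset count from Lemma \ref{countingf} for the small primes, and Lemma \ref{bornetrace} with a trace count for the large primes. The only quibble is the tail estimate $\sum_{p>T^{\beta/5}}T/p^2$, which is $O(T^{1-\beta/5})$ rather than $O(T^{1-2\beta/5})$; this does not affect the argument since one still gets $o(|\Gamma_T|)$ after multiplying by $T^{1+\eta}$ with $\eta<\beta/5$.
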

 \begin{proof} We first fix $K\geq 3$ and compute an upper bound for the upper density of $M_K$; it will be sufficient to check that this bound does tend to zero as $K\to +\infty$. 
 
 If $p>2$ is prime, $\gamma \in \Gamma$ hyperbolic, and $p$ divides either $u_\gamma$ or $f_\gamma$, then since 
 $$\Tr(\gamma)^2-4=u_\gamma^2 f_\gamma^2 \Delta_\K,$$
we see that $p^2$ divides the product $\Tr(\gamma)^2-4=(t-2)(t+2)$, 
where $t:=\Tr(\gamma)$. This means that 
$$t\equiv  \pm 2 \; \mathrm{mod}\; p^2,$$
 because the gcd of $t-2$ and $t+2$ is at most $4$. This congruence of the trace merely depends on the (right) coset of $\gamma$ mod $\Gamma(p^2)$.
 Denote by $Z_p\subset \Gamma$ a set of representatives of cosets mod $\Gamma(p^2)$ whose trace are $\pm 2$ mod $p^2$, so that
 $$M_K \subset \bigcup_{p \geq K} Z_p \Gamma(p^2).$$
 
  Let $T>K$ be large, and $\alpha \in (0,1)$ be a (small) parameter to be chosen later. We cut $M_K\cap \Gamma_T$ in two parts, the part $M_K^{small}$ where $t^2-4$ is divisible by $p^2$ for some relatively small $p\leq T^\alpha$ (but still $p\geq K$), and the part $M_K^{big}$ where $t^2-4$ has a very large square prime factors $p\geq T^\alpha$. \\

  We first analyze $M^{small}_K$. Let $p$ be a small prime $p\leq T^\alpha$, $p\geq K$. By the previous remark,
$$M^{small}_K\cap \Gamma_T \subset \bigcup_{K\leq p \leq T^\alpha} (Z_p \Gamma(p^2) \cap \Gamma_T).$$  
  By the uniform equidistribution Theorem \ref{equid_GN} (\cite{Nevo_Sarnak}), 
 there exist constants $C>0$, $\beta>0$, $T_0>0$ such that for all $p$ and $\gamma_0 \in \Gamma$ and $T\geq T_0$,
$$\left| |\gamma_0 \Gamma(p^2)\cap \Gamma_T| - \frac{|\Gamma_T|}{[\Gamma:\Gamma(p^2)]} \right|\leq C |\Gamma_T|^{1-\beta},$$
so
$$|\gamma_0 \Gamma(p^2)\cap \Gamma_T|\leq \frac{|\Gamma_T|}{[\Gamma:\Gamma(p^2)]}  + C |\Gamma_T|^{1-\beta}.$$

   Thus, adding for each $p \in (K,T^\alpha)$ and each $\gamma_0 \in Z_p$ these upper bounds, we get
   
   $$|M^{small}_K\cap \Gamma_T |\leq \sum_{K\leq p\leq T^\alpha}  |Z_p|\left( \frac{|\Gamma_T|}{[\Gamma:\Gamma(p^2)]} + C |\Gamma_T|^{1-\beta}\right).$$
    
  By Lemma \ref{countingf} applied with $\alpha=0$, $\beta=1$, we get the estimate
$$|Z_p|= 2(p^4+p^3-p^2)\leq 4p^4,$$
 and by Lemma \ref{cardinalsl2}, $[\Gamma:\Gamma(p^2)]=p^4(p^2-1)\geq p^6/2$. Thus
   $$|M^{small}_K\cap \Gamma_T |\leq |\Gamma_T| \left( \sum_{K\leq p\leq T^\alpha}  \frac{8}{p^2} \right) + C \left( \sum_{K\leq p\leq T^\alpha} 4p^4 \right)  |\Gamma_T|^{1-\beta}.$$   
  We can bound $\sum_{K\leq p \leq T^\alpha} 4p^4 \leq 4T^{5\alpha}$. Also, since $|\Gamma_T|\sim 6T^2$, for $T$ large enough, $|\Gamma_T|\geq 5T^2$, so we have
  $$\mathbb{P}_T(M_K^{small}) \leq \frac{8}{K-1} + \frac{C}{5}T^{5\alpha - 2\beta}.$$
  We now fix the choice of $\alpha$, such that $5\alpha<2\beta$. We get the desired conclusion that $M_K^{small}$ has small $\mathbb{P}_T$ measure when $K$ and $T$ are large.\\
  
  Now we consider $M^{big}_K$.
  For fixed $p$, because $|t|\leq 2T$, there are at most $8T/p^2$ possible traces $t$ that satisfies the congruence condition $t\equiv \pm2 \; \mathrm{mod} \; p^2$ (recall that we excluded $t= \pm 2$). By Lemma \ref{bornetrace}, each such trace is realized by at most $c_1 T^{1+\eta}$ matrices in $\Gamma_T$, where $\eta>0$ is chosen such that $\eta<\alpha$. To summarize,
  $$|M_K^{big}|\leq \sum_{T^\alpha \leq p} \frac{8T}{p^2}c_1 T^{1+\eta}\leq c_2 T^{2+\eta } \sum_{p \geq T^\alpha} \frac1{p^2}.$$
 One has $\sum_{p \geq T^\alpha} \frac1{p^2}=O(T^{-\alpha})$, so
  $$|M_K^{big}|\leq c_3 T^{2+\eta-\alpha }.$$
  We thus obtain
  $$\mathbb{P}_T(M_K^{big})=O(T^{\eta-\alpha}).$$
 \end{proof}

\section{Class number}
 In this section, we indicate how to prove Corollary \ref{brauer-siegel}, which states that class numbers are of the order of $T^{1\pm\epsilon}$ with high probability.

 By the Brauer-Siegel Theorem,
 $$\frac{\log \left( h(\Delta_\K) \log(\epsilon_\K)\right) }{\log \sqrt{\Delta_\K}}\to 1,$$
 as the field $\K$ varies. By Theorem \ref{proba_fondamental} and Proposition \ref{distrib_trace}, $\epsilon_\K$ is of the order of $T$ in the sense that for every $\eta>0$, there exists a $c>0$ such that on a set of measure $1-\eta$,
 $$c T<\epsilon_\K<T.$$
By Theorem $2$, $\Delta_K$ is of the order of $T^2$ with high probability, in the same sense. Embedding these two estimates in the Brauer-Siegel Theorem, we obtain that with high probability, $h(\Delta_\gamma)$ is generally of size $T^{1\pm\epsilon}$:
$$\frac{\log \left( h(\Delta_\K) \right) }{\log T}\overset{\mathbb{P}_T} \to 1.$$

 There is a classical formula for $h(\Delta_\gamma)$, see for example \cite[Theorem 12.12]{neukirch2013algebraic}, or Conrad's notes \cite{Conrad}:

$$\frac{h(\mathcal{O}_\gamma)}{h(\mathcal{O}_\K)}=\frac{[\mathcal{O}_\K/f_\gamma \mathcal{O}_\K:
\mathcal{O}_\gamma/f_\gamma \mathcal{O}_\K]}{[\mathcal{O}_\K^\times:\mathcal{O}_\gamma^\times]}. $$

 Here $h(\mathcal{O})$ denotes is the classical (not narrow) class number of the ring $\mathcal{O}$, that may be equal or half the corresponding narrow class number $h(\Delta)$. Anyway,  the formula shows that the ratio $h(\Delta_\gamma)/h(\Delta_\K)$ is bounded if the conductor $f_\gamma$ is. Still by Theorem \ref{maintheorem}, $f_\gamma$ is bounded with large probability, so we also get in this case:
$$\frac{\log \left( h(\Delta_\gamma )\right) }{\log T}\overset{\mathbb{P}_T}\to  1.$$
 
\appendix
\section{The prime $p=2$} \label{appendixA}
In this subsection, we consider the prime $p=2$. The first task is to identify the $2$-adic valuation of $u_\gamma$ and $f_\gamma$ by congruence conditions, in order to identify the sets $E_{2,\alpha,\beta}$. The second task is to compute their measure. These two tasks involve some tedious case-by-case analysis.

\subsection{The $2$-adic valuation of $u_\gamma$} 

 By Lemma \ref{ugamma}, the $2$-adic valuation of $u_\gamma$ is the largest power $2^k$ such that $\gamma \, \mathrm{mod} \, 2^k$ is scalar. However, scalar matrices in this case are not limited to $\pm I_2$:

 \begin{lem}\label{scalarp2}
 If $p=2$, the scalar matrices in $\SL(2,\Z/2^k\Z)$ are
 \begin{itemize}
  \item $I_2$ if $k=1$,
  \item $\pm I_2$ if $k=2$,
  \item $\pm  I_2$ and $\pm( 1 +2^{k-1})I_2$ if $k\geq 3$.  
 \end{itemize}
 \end{lem}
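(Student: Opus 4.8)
The plan is to reduce this to counting square roots of $1$ in $(\Z/2^k\Z)^\times$. A scalar matrix in $M(2,\Z/2^k\Z)$ has the form $\lambda I_2$, and it lies in $\SL(2,\Z/2^k\Z)$ exactly when $\det(\lambda I_2)=\lambda^2\equiv 1\pmod{2^k}$; conversely every such $\lambda$ obviously gives a scalar element of the group. So it suffices to show that the solutions of $\lambda^2\equiv 1\pmod{2^k}$ in $\Z/2^k\Z$ are $\{1\}$ for $k=1$, $\{\pm1\}$ for $k=2$, and $\{\pm1,\pm(1+2^{k-1})\}$ for $k\ge3$, and then translate back.

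First I would dispose of $k=1$ and $k=2$ by direct inspection of $(\Z/2\Z)^\times=\{1\}$ and $(\Z/4\Z)^\times=\{1,3\}$, noting $3\equiv-1$ and $3^2\equiv1\pmod4$; here the would-be extra residues $\pm(1+2^{k-1})$ simply collapse onto $\mp1$, which is why the list looks different for small $k$. For $k\ge3$, the key step is a $2$-adic valuation computation: if $\lambda$ solves the congruence it is odd, the factors $\lambda-1$ and $\lambda+1$ are both even and differ by $2$, so exactly one of them has valuation $1$ and the other valuation $\ge2$; hence $v_2(\lambda^2-1)=v_2(\lambda-1)+v_2(\lambda+1)=1+\max\bigl(v_2(\lambda-1),v_2(\lambda+1)\bigr)$. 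Requiring this to be $\ge k$ forces $\lambda\equiv1\pmod{2^{k-1}}$ or $\lambda\equiv-1\pmod{2^{k-1}}$, which modulo $2^k$ leaves only the four residues $\pm1,\pm(1+2^{k-1})$ (using $-(1+2^{k-1})\equiv 2^{k-1}-1\pmod{2^k}$).

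To finish I would check the reverse inclusion — $(1+2^{k-1})^2=1+2^k+2^{2k-2}\equiv1\pmod{2^k}$ since $2k-2\ge k$, and likewise for its negative, while $(\pm1)^2=1$ — together with pairwise distinctness of these four residues when $k\ge3$ (clear, since $0<2^{k-1}<2^k$ and $2^{k-1}\ne2$). Translating scalars back to matrices then yields exactly the stated list: $I_2$ for $k=1$, $\pm I_2$ for $k=2$, and $\pm I_2,\pm(1+2^{k-1})I_2$ for $k\ge3$.

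There is no genuine obstacle here: the statement is a small finite computation, and the only points requiring a little care are the valuation bookkeeping for $\lambda^2-1$ and the separate handling of $k\le2$. As an alternative one could simply quote the structure $(\Z/2^k\Z)^\times\cong\Z/2\times\Z/2^{k-2}$ for $k\ge3$, whose $2$-torsion subgroup has order $4$ and is generated by $-1$ and $1+2^{k-1}$, but the elementary argument above is self-contained and is what I would write out.
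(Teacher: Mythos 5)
Your proof is correct and complete; the paper explicitly leaves this lemma to the reader, and your argument — reducing to the square roots of $1$ in $(\Z/2^k\Z)^\times$ and pinning them down via the valuation identity $v_2(\lambda^2-1)=1+\max\bigl(v_2(\lambda-1),v_2(\lambda+1)\bigr)$ — is exactly the standard computation the author has in mind. The small-$k$ cases and the distinctness check are handled properly, so nothing is missing.
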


 The proof is left to the reader.
 It will be useful for later to note that being congruent to either $\pm I_2$ or one of these two additional scalar matrices mod $2^k$ can be distinguished by looking at the trace mod $2^{2k}$.
\begin{lem}\label{p2separation}
 Let $k\geq 3$. 
If
 $$\gamma=\pm(1 +2^{k-1})I_2 \; \mathrm{mod} \; 2^{k},$$
then 
$$\Tr(\gamma)= \pm(2+2^{2k-2}) \; \mathrm{mod} \; 2^{2k}.$$ 
\end{lem}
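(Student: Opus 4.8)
The plan is to mimic the proof of Lemma \ref{pn2separation}, but tracking one more term in the expansion. Write $\gamma = \pm(1+2^{k-1})\bigl(I_2 + 2^k M\bigr)$ for some integer matrix $M = \begin{pmatrix} a & b \\ c & d \end{pmatrix}$, where the sign is the overall sign; actually it is cleaner to absorb everything and simply write $\gamma = \pm\bigl((1+2^{k-1})I_2 + 2^k M\bigr)$ since the hypothesis $\gamma \equiv \pm(1+2^{k-1})I_2 \bmod 2^k$ says precisely that the off-diagonal entries of $\mp\gamma$ are divisible by $2^k$ and the diagonal entries are $\equiv 1+2^{k-1} \bmod 2^k$. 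Without loss of generality take the $+$ sign (replacing $\gamma$ by $-\gamma$ negates both the trace and the scalar, so the two cases are equivalent). Then $\Tr(\gamma) = 2(1+2^{k-1}) + 2^k(a+d) = 2 + 2^k + 2^k(a+d)$, so the entire claim reduces to showing $a+d \equiv 2^{k-2} \bmod 2^{k}$, because then $2^k(a+d) \equiv 2^{2k-2} \bmod 2^{2k}$ and $\Tr(\gamma) \equiv 2 + 2^k + 2^{2k-2} \bmod 2^{2k}$ — wait, that has an extra $2^k$; I need $a+d$ to also kill the stray $2^k$. So the real target is $a+d \equiv -1 + 2^{k-2} \pmod{2^{k}}$, giving $2^k + 2^k(a+d) \equiv 2^{2k-2} \bmod 2^{2k}$ after noting $2^k \cdot 2^{k-2} = 2^{2k-2}$ and that $2^k \cdot (\text{multiple of } 2^k)$ vanishes mod $2^{2k}$.

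To extract the congruence on $a+d$, I would use $\det\gamma = 1$. Expanding, $\det\bigl((1+2^{k-1})I_2 + 2^k M\bigr) = (1+2^{k-1})^2 + 2^k(1+2^{k-1})(a+d) + 2^{2k}\det M = 1$. Now $(1+2^{k-1})^2 = 1 + 2^k + 2^{2k-2}$, and since $k \geq 3$ we have $2k-2 \geq k+1 > k$, so modulo $2^{2k}$ (in fact it suffices to work mod $2^{2k-1}$ or so to pin down $a+d$ mod $2^{k-1}$, but let me be careful) this gives
\[
2^k + 2^{2k-2} + 2^k(1+2^{k-1})(a+d) + 2^{2k}\det M = 0.
\]
Dividing by $2^k$: $1 + 2^{k-2} + (1+2^{k-1})(a+d) + 2^k \det M = 0$ as an integer equation. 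Reducing this modulo $2^{k}$ — and using that $2^{k-1}(a+d)$ only matters mod $2^k$, so $(1+2^{k-1})(a+d) \equiv (a+d) + 2^{k-1}(a+d) \bmod 2^k$ — I can solve for $a+d \bmod 2^k$. Actually to nail $\Tr(\gamma)$ mod $2^{2k}$ I only need $2^k(a+d)$ mod $2^{2k}$, i.e. $a+d$ mod $2^k$, and the displayed equation determines $a+d$ mod $2^k$ uniquely once one observes $1 + 2^{k-1}$ is a unit mod $2^k$; a short computation shows its inverse is $1 - 2^{k-1}$ mod $2^k$ (since $(1+2^{k-1})(1-2^{k-1}) = 1 - 2^{2k-2} \equiv 1$ mod $2^k$ as $2k-2 \geq k+1$ for $k \geq 3$). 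One then reads off $a+d \equiv -(1+2^{k-2})(1-2^{k-1}) \equiv -(1 + 2^{k-2} - 2^{k-1}) \equiv -1 - 2^{k-2} + 2^{k-1} \pmod{2^k}$, so $2^k(a+d) \equiv -2^k - 2^{2k-2} + 2^{2k-1} \pmod{2^{2k}}$, and hence $\Tr(\gamma) = 2 + 2^k + 2^k(a+d) \equiv 2 - 2^{2k-2} + 2^{2k-1} = 2 + 2^{2k-2}(2-1) = 2 + 2^{2k-2} \pmod{2^{2k}}$. Reinstating the sign gives $\Tr(\gamma) \equiv \pm(2 + 2^{2k-2}) \bmod 2^{2k}$, as claimed.

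The only mild obstacle is bookkeeping: one must be consistent about which powers of $2$ survive modulo $2^{2k}$ versus modulo $2^k$, and the hypothesis $k \geq 3$ is used precisely to guarantee $2k-2 > k$ so that cross terms like $2^{2k-2}$ from $(1+2^{k-1})^2$ and from $2^{k-1}(a+d)$ land in the range where they contribute to the trace mod $2^{2k}$ but the inversion of $1+2^{k-1}$ mod $2^k$ is still clean. I expect no conceptual difficulty beyond careful tracking of the binary expansions, exactly parallel to Lemma \ref{pn2separation} with the single refinement of keeping the next-order term. I would present it as a two-paragraph computation: first reduce to the claim about $a+d \bmod 2^k$ via the trace expansion, then obtain that claim from $\det \gamma = 1$.
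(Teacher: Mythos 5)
Your proof is correct and follows essentially the same route as the paper's: write $\gamma=\pm\bigl((1+2^{k-1})I_2+2^kM\bigr)$, use $\det\gamma=1$ to pin down $a+d\equiv-1+2^{k-2}\pmod{2^k}$ (the paper states this solution directly, you get it by inverting $1+2^{k-1}$ mod $2^k$ — same computation), and substitute into the trace. The bookkeeping all checks out, including the use of $k\geq 3$ to discard $2^{2k-3}$ modulo $2^k$.
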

\begin{proof}
 This is pretty similar to the proof of Lemma \ref{pn2separation}. We may write $\gamma$ as
\begin{equation} \label{expgabcd}
\gamma=\epsilon\left( (1 + 2^{k-1})I_2 +2^k \begin{pmatrix}a & b \\ c & d \end{pmatrix} \right),
\end{equation}
where $a,b,c,d$ are integers, and $\epsilon=\pm 1$. Since $\gamma$ has determinant $1$, we have
$$(1 + 2^{k-1} +a 2^k)(1 +  2^{k-1} +d 2^k)-2^{2k}bc=1.$$
So
$$2^{k} +  2^{2k-2} + (a+d)(1+ 2^{k-1})2^k +(ad-bc)2^{2k}=0.$$
We factor $2^k$, then reduce modulo $2^k$ :
$$(1 + 2^{k-2}) + (a+d)(1+ 2^{k-1}) =0 \; \mathrm{mod} \; 2^{k},$$
whose solution for $k\geq 3$ is:
$$a+d=-1+2^{k-2} \; \mathrm{mod} \; 2^{k}.$$
Now returning to Equation (\ref{expgabcd}) and taking the trace, we get
\begin{align*}
\Tr (\gamma) & =\epsilon (2+(a+d+1)2^k)   \\
& =\pm(2+ 2^{2k-2}) \; \mathrm{mod} \; 2^{2k}.  
\end{align*}

\end{proof}

\subsection{The $2$-adic valuation of $u_\gamma f_\gamma$}
 We now turn our attention to the $2$-adic valuation of $u_\gamma f_\gamma$, which can be determined by looking at the trace of $\gamma$ as follows:
 
 \begin{lem} \label{ufgammap2}
 
 Let $p=2$, and $t=\Tr(\gamma)$ where $\gamma \in \Gamma$ is hyperbolic. Then
\begin{itemize}
\item $v_2(u_\gamma f_\gamma)=0$ if and only if $t \, \mathrm{mod} \, 16 \notin 
\{2,6,10,14\}$.
\item $v_2(u_\gamma f_\gamma)=1$ if and only if $t \, \mathrm{mod} \, 16 \in \{6,10\}$.
\item $v_2(u_\gamma f_\gamma)=2$ if and only if $t \, \mathrm{mod} \, 64 \in \{14,30,34,50\}$.
\item For $\alpha \geq 3$, $v_2(u_\gamma f_\gamma)=\alpha$ if and only if $t$ is of the form 
$$\pm (2+2^{2\alpha+1}) \, \mathrm{mod} \, 2^{2\alpha+2},$$ 
$$\pm (2+3.2^{2\alpha}) \, \mathrm{mod} \, 2^{2\alpha+2},$$ 
$$ \mathrm{or} \; \pm (2+2^{2\alpha-2}) \, \mathrm{mod} \, 2^{2\alpha}.$$
\end{itemize}

 \end{lem}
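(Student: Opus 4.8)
The plan is to express $v_2(u_\gamma f_\gamma)$ via the $2$-adic valuation of $\Tr(\gamma)^2-4 = u_\gamma^2 f_\gamma^2\Delta_\K$, then translate that valuation into congruences on $t = \Tr(\gamma)$ modulo a suitable power of $2$. The subtlety absent in the odd case is that $2$ may divide $\Delta_\K$, so the parity of $v_2(\Tr(\gamma)^2-4)$ is not automatically determined by $v_2(u_\gamma f_\gamma)$; we must therefore keep careful track of whether $D \equiv 1, 2, 3 \pmod 4$, whether $D$ is even, and hence what $v_2(\Delta_\K) \in \{0,2,3\}$ can be. First I would write $\Tr(\gamma)^2-4 = (t-2)(t+2)$ and observe, as in Lemma \ref{ufgamma}, that $\gcd(t-2,t+2)$ divides $4$; but unlike the odd case both factors can now be even, so we instead analyze the valuation of the product directly. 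Since $t$ is even (this follows because $\gamma$ hyperbolic with $u_\gamma f_\gamma$ even forces $t \equiv 2 \pmod 2$, and in general $t^2 - 4 \equiv 0$ forces a parity condition), write $t = 2s$ and factor $t^2 - 4 = 4(s-1)(s+1)$; now $s-1$ and $s+1$ are consecutive even-or-odd integers of the same parity, with $\gcd$ dividing $2$.

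Next I would organize the argument by the value of $\nu := v_2(t^2-4)$. For $\nu = 2$ (equivalently one of $s\pm 1$ odd, i.e. $s$ even), this forces $v_2(u_\gamma f_\gamma) = 0$ unless $2 \mid \Delta_\K$; running through the residues shows $v_2(u_\gamma f_\gamma) = 0 \iff t \not\equiv 2,6,10,14 \pmod{16}$, where the specific modulus $16$ is dictated by needing to separate $\nu \le 4$ from $\nu \ge 6$ and to read off $\Delta_\K \bmod 8$. For $v_2(u_\gamma f_\gamma) = 1$ we need $\nu \in \{2 + 2, 2+3\} = \{4,5\}$ depending on whether $2 \mid D$, and the case $D \equiv 2 \pmod 4$ versus $D \equiv 1 \pmod 4$ (odd) must be disentangled; a direct check of residues mod $16$ gives $t \equiv 6, 10 \pmod{16}$. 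The $v_2 = 2$ case similarly requires going to modulus $64$ to distinguish $\nu \in \{4,5,6\}$ with the correct $\Delta_\K$, yielding $t \equiv 14,30,34,50 \pmod{64}$. I expect the bookkeeping here — matching $(v_2(u_\gamma f_\gamma), v_2(\Delta_\K))$ pairs to residue classes — to be the main obstacle, since small cases $\alpha \le 2$ are irregular precisely because $v_2(\Delta_\K) \in \{0,2,3\}$ interacts nontrivially with small $\nu$.

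For the stable range $\alpha \ge 3$, I would argue as follows. Here $\nu = v_2(t^2-4) = 2\alpha + v_2(\Delta_\K)$ with $v_2(\Delta_\K) \in \{0,2,3\}$, and one checks that for $\alpha \ge 3$ all three subcases genuinely occur and are distinguished by reading $t \bmod 2^{\nu+?}$ just far enough to pin down both $\alpha$ and the $2$-part of $\Delta_\K$ but not to overdetermine. Concretely: writing $t = 2s$, the condition $v_2(t^2-4) = 2\alpha + v_2(\Delta_\K)$ combined with $t^2 \equiv 4 \pmod{2^{2\alpha+\,\cdot}}$ forces $t$ into one of the stated forms. The case $v_2(\Delta_\K) = 3$ (i.e. $D \equiv 3 \pmod 4$ roughly, $\Delta_\K = 4D$ with $D$ odd... actually the $8 \mid \Delta_\K$ case) gives $t \equiv \pm(2 + 2^{2\alpha-2}) \pmod{2^{2\alpha}}$ after accounting for the extra factor; the cases $v_2(\Delta_\K) = 2$ and $v_2(\Delta_\K) = 0$ — wait, $\Delta_\K$ is a fundamental discriminant so $v_2(\Delta_\K) \in \{0,2,3\}$ and here $D \equiv 1 \pmod 4$ gives $v_2 = 0$, $D \equiv 2 \pmod 4$ gives $v_2 = 3$, $D \equiv 3 \pmod 4$ gives $v_2 = 2$ — give the congruences $\pm(2 + 2^{2\alpha+1}) \pmod{2^{2\alpha+2}}$ and $\pm(2 + 3\cdot 2^{2\alpha}) \pmod{2^{2\alpha+2}}$. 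The verification that these three families are disjoint and exhaustive within $v_2(u_\gamma f_\gamma) = \alpha$, and that conversely each such $t$ arises from some hyperbolic $\gamma$ (which follows from the Pell equation interpretation: $(|t|, u_\gamma f_\gamma)$ being a solution, combined with Theorem \ref{proba_fondamental}'s underlying solvability — though for the \emph{lemma} itself we only need the algebraic equivalence, so no probability is invoked), completes the proof. Throughout, the only real content is careful $2$-adic arithmetic; I would present it as a finite case split on $t \bmod 2^N$ for the relevant $N$ in each bullet, deferring the purely mechanical residue enumerations.
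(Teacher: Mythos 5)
Your strategy is the same as the paper's: start from $t^2-4=(u_\gamma f_\gamma)^2\Delta_\K$, record that $v_2(t^2-4)=2v_2(u_\gamma f_\gamma)+v_2(\Delta_\K)$ with $v_2(\Delta_\K)\in\{0,3,2\}$ according as $D\equiv 1,2,3 \bmod 4$, recover $D \bmod 4$ from the parity of $v_2(t^2-4)$ and from the residue mod $4$ of the odd part $(t^2-4)/2^{v_2(t^2-4)}$, and then turn this into a finite residue check mod $16$, $64$, resp.\ $2^{2\alpha+2}$. That is exactly how the paper proceeds, so the plan is sound. Two remarks. First, in your $\alpha\geq 3$ discussion the pairing of $v_2(\Delta_\K)$ with the three congruence families is permuted: writing $t\equiv\pm(2+\lambda 2^{2k}) \bmod 2^{2k+2}$ one finds $t^2-4\equiv \lambda 2^{2k+2} \bmod 2^{2k+4}$, so $\lambda=2$ gives odd valuation $2k+3$, hence $D\equiv 2$, $v_2(\Delta_\K)=3$ and $v_2(u_\gamma f_\gamma)=k$ (family $\pm(2+2^{2\alpha+1}) \bmod 2^{2\alpha+2}$); $\lambda=3$ gives $D\equiv 3$, $v_2(\Delta_\K)=2$, $v_2(u_\gamma f_\gamma)=k$ (family $\pm(2+3\cdot 2^{2\alpha}) \bmod 2^{2\alpha+2}$); and $\lambda=1$ gives $D\equiv 1$, $v_2(\Delta_\K)=0$, $v_2(u_\gamma f_\gamma)=k+1$ (family $\pm(2+2^{2\alpha-2}) \bmod 2^{2\alpha}$, after the shift $k=\alpha-1$) --- not the assignment you wrote. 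The slip is harmless to the statement (the union of the three families is what matters, and an honest computation of $v_2(t^2-4)$ for your claimed pairing would yield a non-integer valuation and force the correction), but it shows the deferred verification has not actually been carried out. Second, the parametrization $t\equiv\pm(2+\lambda 2^{2k}) \bmod 2^{2k+2}$, $\lambda\in\{1,2,3\}$, with $k$ minimal such that $t\not\equiv\pm 2 \bmod 2^{2k+2}$, is worth adopting: it treats the whole stable range uniformly and reduces the bookkeeping you anticipate to a single displayed congruence. Also note that $t$ need not be even in general (odd traces occur throughout your first bullet); the substitution $t=2s$ is legitimate only after you have reduced to $v_2(u_\gamma f_\gamma)\geq 1$.
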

\begin{proof} Let $D$ be the square-free positive integer such that $\K=\Q(\sqrt{D})$, and recall that
$$\Delta_\K=\begin{cases}
 4D \; \mathrm{if} \; D\equiv 2,3 \, \mathrm{mod} (4),\\
 D \; \mathrm{if} \; D\equiv 1 \, \mathrm{mod} (4). 
\end{cases}
$$
 Since $\Tr(\gamma)^2-4=(u_\gamma f_\gamma)^2\Delta_\K$, we have,

\begin{equation} \label{mod16cases}
v_2(t^2-4)=\begin{cases}
 2v_2(u_\gamma f_\gamma) & \; \mathrm{if} \; D\equiv 1 \, \mathrm{mod} \; 4,\\
 2v_2(u_\gamma f_\gamma)+3  &\; \mathrm{if} \; D\equiv 2 \, \mathrm{mod} \; 4,\\
 2v_2(u_\gamma f_\gamma)+2  & \; \mathrm{if} \; D\equiv 3 \, \mathrm{mod} \; 4.
 \end{cases}
\end{equation}

 It will be useful to note the following facts: 
\begin{itemize}
\item The case $D\equiv 2 \, \mathrm{mod} \; 4$ appears if and only if the valuation of $t^2-4$ is odd.
\item If the valuation of $t^2-4$ is even, i.e. $D\neq 2 \, \mathrm{mod} \; 4$, then if we factor out the powers of $2$, i.e. $t^2-4=2^{v_p(t^2-4)}w$, then $w\equiv D \, \mathrm{mod} \; 4$, because $1$ is the only odd square mod $4$.
\end{itemize}

 We first consider $t$ mod $16$. The following array summarizes what can be said using Equation (\ref{mod16cases}), depending on $t$ mod $16$:

$$
\begin{array}{|c||c|c|c|c|}
\hline
t \; \mathrm{mod} \; 16 & \mathrm{odd} &  \pm 6 & 0, \,  \pm 4, \,  8 & \pm 2 \\
\hline
\hline
D \; \mathrm{mod} \; 4 & 1 & 2 & 3 & ?    \\
\hline
v_p(u_\gamma f_\gamma) & 0 & 1 & 0 & \geq 2 \\
\hline
\end{array}
$$
 Let us show how to find this array. If $t \; \mathrm{mod} \; 16$ is odd, then $t^2-4$ is also odd so we must be in the case $D\equiv 1 \, \mathrm{mod} \; 4$ since in all other cases $t^2-4$ has valuation $\geq 2$, by (\ref{mod16cases}). Hence $v_p(u_\gamma f_\gamma)=0$.\\

 If $t \; \mathrm{mod} \; 16$ is equal to $\pm 6$, then $t+2$ and $t-2$ are congruent to 
$8$ and $4$, or $12$ and $8$ depending on the sign; in both cases, one of them has $2$-adic valuation 3 and the other has valuation $2$, so in the end
$$v_2(t^2-4)=5.$$
In the formula (\ref{mod16cases}), $D\equiv 2 \; \mathrm{mod}  \; 4$ is the only case where the valuation of $t^2-4$ is odd, so here we have $2v_2(u_\gamma f_\gamma)+3=5$, so $v_2(u_\gamma f_\gamma)=1$. \\

If $t \, \mathrm{mod} \, 16$ is $0, 4, 8$ or $12$, then $t-2$ and $t+2$ both have $2$-adic valuation equal to $1$, so $v_2(t^2-4)=2$ is even, so $D$ must be $1$ or $3$ mod $4$.
Let's show that in fact $D$ must be $3$ mod $4$. Since $t \, \mod \, 16 \in \{0, 4, 8,12\}$ so $t^2-4 = 12 \;  \mathrm{mod} \; 16$ so there exists an integer $w$ such that
\begin{equation} \label{t2m4m4}
t^2-4=4w \; \mathrm{with}\; w \equiv 3  \;  \mathrm{mod} \; 4.
\end{equation}

  But as noted before, we must have $w\equiv D \;  \mathrm{mod} \; 4$,
so that $D\equiv 3 \;  \mathrm{mod} \; 4$, so by (\ref{mod16cases}), $v_2(u_\gamma f_\gamma)=0$.\\

 We conclude the justification of the array by considering the case where $t \equiv \pm 2 \, \mathrm{mod} \, 16$. In this case, one of the number $t+2,t-2$ has valuation at least 4 and the other exactly 2. So the valuation of $t^2-4$ is at least $6$, and by Equation (\ref{mod16cases}),
$$6\leq v_p(t^2-4)\leq 2v_p(u_\gamma f_\gamma)+3,$$
so $v_p(u_\gamma f_\gamma)\geq 3/2$, but this is an integer so must be $\geq 2$.\\

 The array explains the two first case $v_2(u_\gamma f_\gamma)=0,1$ of the statement.\\

 We now consider in more depth the case where $t \equiv \pm 2 \, \mathrm{mod} \, 16$. In this case, since $\gamma$ is hyperbolic, $t\neq \pm2$ so there exists a smallest integer $k\geq 2$ such that
$$t \neq \pm 2 \; \mathrm{mod} \; 2^{2k+2}.$$
so there exists an integer $\lambda \in \{1,2,3\}$ such that
$$t = \pm (2 + \lambda 2^{2k}) \; \mathrm{mod} \; 2^{2k+2}.$$
 Thus for some integer $\mu$,
$$t = \pm (2 + \lambda 2^{2k}  + \mu 2^{2k+2}) \; \mathrm{mod} \; 2^{2k+4},$$
so
$$t^2= 4 + 4 \lambda 2^{2k} \; \mathrm{mod} \; 2^{2k+4},$$
where most of the terms vanished when we expanded the square because $k\geq 2$. Thus,
$$t^2-4=\lambda 2^{2k+2}\; \mathrm{mod} \; 2^{2k+4}.$$
If $\lambda=2$, then $v_p(t^2-4)=2k+3$ is odd, so we must be in the case where $D\equiv 2 \, \mathrm{mod} \, 4$, by (\ref{mod16cases}), so $v_p(u_\gamma f_\gamma)=k$.\\
Otherwise, $\lambda =1,3$ so $v_p(t^2-4)=2k+2$ is even so $D\equiv 1,3 \, \mathrm{mod} \, 4$. Again, $(t^2-4)/2^{v_2(t^2-4)}$ has the same residue mod $4$ as $D$, so
$$ D \equiv \lambda \; \mathrm{mod} \; 4.$$
Still by  (\ref{mod16cases}), this implies that if $\lambda=1$, then $v_p(u_\gamma f_\gamma)=k+1$, and if $\lambda=3$ then $v_p(u_\gamma f_\gamma)=k$.\\

 Now we can explain the last two lines of the statement. Assume $v_2(u_\gamma f_\gamma)=\alpha \geq 2$. From the above array, $t$ must be $\pm2$ mod $16$. 

Define $k,\lambda$ as above, then $\alpha=k$ if $\lambda=2,3$ and $\alpha=k+1$ if $\lambda=1$. Since $k\geq 2$, in the case $\alpha=2$, we cannot have $\lambda=1$, and so $k=2$ and thus
$$\alpha=2 \Rightarrow t=\pm (2 + 16 \lambda ) \; \mathrm{mod} \; 64, \; \lambda \in \{2,3\},$$
and the reverse implication follows from the previous analysis.
If $\alpha\geq 3$, the previous analysis also concludes.

\end{proof}

\subsection{Definition of $E_{2,\alpha,\beta}$}\label{defEp2}

 We are now in position to describe the sets $E_{2,\alpha,\beta}$. We first define the clopen sets
$$A_0=\Big\{\gamma \in \SL(2,\Z_2) \, : \, \gamma \neq I_2 \; \mathrm{mod} \; 2\Big\},
$$
$$A_1=\Big\{\gamma \in \SL(2,\Z_2) \, : \, \gamma = I_2 \; \mathrm{mod} \; 2,
 \gamma \neq \pm I_2 \; \mathrm{mod} \; 4
\Big\},$$
$$A_2=\Big\{\gamma \in \SL(2,\Z_2) \, : \, \gamma = \pm I_2 \; \mathrm{mod} \; 4,
 \gamma \neq \pm I_2, \pm 3 I_2  \; \mathrm{mod} \; 8
\Big\},$$
and for $\alpha\geq 3$,
$$A_\alpha=\Big\{\gamma \in \SL(2,\Z_2) \, : \, \gamma = \pm I_2 \; \mathrm{mod} \; 2^\alpha,
 \gamma \neq \pm I_2, \pm (1+2^{\alpha} I_2)  \; \mathrm{mod} \; 2^{\alpha+1}
\Big\},$$
$$B_\alpha=\Big\{\gamma \in \SL(2,\Z_2) \, : \, \gamma =\pm (1+2^{\alpha-1}) I_2 \; \mathrm{mod} \; 2^{\alpha}\Big\},$$
with the convention that $B_0=B_1=B_2=\emptyset$. Thus by Lemmata \ref{ugamma} and \ref{scalarp2}, for any $\gamma \in \Gamma$ hyperbolic,
$$v_2(u_\gamma)=\alpha \Leftrightarrow \gamma \in A_\alpha \sqcup B_\alpha.$$
The sets $A_k,B_k$ form a partition of $\SL(2,\Z_p)-\{\pm I_2\}$.

Similarly, we put:
$$C_0=\Big\{\gamma \in \SL(2,\Z_2) \, : \, \Tr(\gamma) \; \mathrm{mod} \;  16 \notin \{2,6,10,14\} \Big\},$$
$$C_1=\Big\{\gamma \in \SL(2,\Z_2) \, : \, \Tr(\gamma) \; \mathrm{mod} \;  16 \in \{6,10\} \Big\},$$
$$C_2=\Big\{\gamma \in \SL(2,\Z_2) \, : \, \Tr(\gamma) \; \mathrm{mod} \;  64 \in \{14,30,34,50\} \Big\},$$
and for $k\geq 3$,
$$
C_k=\Big\{\gamma \in \SL(2,\Z_2) \, : \, \Tr(\gamma) =
\pm (2+\lambda 2^{2k})   \; \mathrm{mod} \;  2^{2k+2}, \,\lambda\in\{2,3\} \Big\}.
$$
$$D_k=\Big\{\gamma \in \SL(2,\Z_2) \, : \, \Tr(\gamma) = 
\pm(2+2^{2k-2}) \; \mathrm{mod} \;  2^{2k} \Big\}$$

By Lemma \ref{ufgammap2},  for $\gamma$ hyperbolic,
$$v_2(u_\gamma)+v_2(f_\gamma)=k \Leftrightarrow \gamma \in C_{k}\sqcup D_k.$$
The sets $C_k$, $D_k$ form a partition of $\SL(2,\Z_p)-\Tr^{-1}(\pm2)$.
Finally, we put
$$E_{2,\alpha,\beta}=(A_\alpha \sqcup B_\alpha) \cap (C_{\alpha+\beta}\sqcup D_{\alpha+\beta}).$$
So for $\gamma$ hyperbolic,
$$v_2(u_\gamma)=\alpha,  v_2(f_\gamma)=\beta \Leftrightarrow \gamma \in E_{2,\alpha,\beta}.$$
By Lemma \ref{p2separation}, we have for $\alpha\geq 3$, $B_\alpha\subset D_\alpha$. This imply that $B_\alpha \cap C_{\alpha+\beta}=\emptyset$ for all $\beta$. So $E_{2,\alpha,\beta}$ admits a slightly simpler expression:
$$E_{2,\alpha,\beta}=
\begin{cases} 
(A_\alpha \cap C_{\alpha}) \sqcup B_\alpha \; & \mathrm{if} \; \beta=0,\\
A_\alpha \cap (C_{\alpha+\beta} \sqcup D_{\alpha+\beta})\; & \mathrm{if} \; \beta>0.
\end{cases}
$$

\subsection{The measure of $E_{2,\alpha,\beta}$}

The goal of this section is to explain how to get the following array.
\begin{prop}\label{i_hate_two}
The value of $\mu_2(E_{2,\alpha,\beta})$ is given by the following array.
$$
\begin{array}{|c||c|c|c|c|}
\hline
\mu_2(E_{2,\alpha,\beta}) & \alpha=0 &  \alpha=1 & \alpha=2  & \alpha\geq 3 \\
\hline
\hline
\beta=0 & 7/12 &  1/16 &  1/64 & 11/(3*2^{3\alpha}) \\
\hline
\beta=1 & 1/8 &  1/32 &  3/2^{3\alpha+2\beta} & 3/2^{3\alpha+2\beta} \\
\hline
\beta=2 & 1/16 &  3/2^{3\alpha+2\beta}  &  3/2^{3\alpha+2\beta} & 3/2^{3\alpha+2\beta} \\
\hline
\beta\geq 3 & 3/2^{3\alpha+2\beta}&  3/2^{3\alpha+2\beta}  &  3/2^{3\alpha+2\beta} & 3/2^{3\alpha+2\beta} \\
\hline
\end{array}$$

\end{prop}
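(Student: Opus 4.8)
The plan is to reduce the whole statement to a finite matrix count modulo a power of $2$, exactly as in Section \ref{section7} for odd primes but with the $p=2$ complications of Appendix \ref{appendixA} taken into account. Each of the sets $A_\alpha$, $B_\alpha$, $C_k$, $D_k$ is defined by congruences modulo some power of $2$, so for $N$ large enough (depending on $\alpha,\beta$) the quantity $\mu_2(E_{2,\alpha,\beta})$ is the number of $\gamma\in\SL(2,\Z/2^N\Z)$ meeting the relevant congruences, divided by $|\SL(2,\Z/2^N\Z)|=3\cdot 2^{3N-2}$ (Lemma \ref{cardinalsl2} with $p=2$). I would treat the ``pieces'' of $E_{2,\alpha,\beta}$ separately, using the simplified description from Section \ref{defEp2}: $E_{2,\alpha,\beta}=(A_\alpha\cap C_\alpha)\sqcup B_\alpha$ when $\beta=0$, and $E_{2,\alpha,\beta}=A_\alpha\cap(C_{\alpha+\beta}\sqcup D_{\alpha+\beta})$ when $\beta>0$.

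The contribution of $B_\alpha$ is immediate: for $\alpha\ge 3$ it is the union of the two residue classes $\gamma\equiv\pm(1+2^{\alpha-1})I_2\ \mathrm{mod}\ 2^\alpha$, so $\mu_2(B_\alpha)=2/|\SL(2,\Z/2^\alpha\Z)|=8/(3\cdot 2^{3\alpha})$, while $B_0=B_1=B_2=\emptyset$; and $B_\alpha\subset D_\alpha$ by Lemma \ref{p2separation}, which is why it lands entirely in $E_{2,\alpha,0}$. For the intersections involving trace congruences, the key device is to parametrize the matrices congruent to $\epsilon I_2$ mod $2^\alpha$ (with $\epsilon=\pm1$, $\alpha\ge 1$) as $\gamma=\epsilon(I_2+2^\alpha M)$ with $M=\left(\begin{smallmatrix}a&b\\c&d\end{smallmatrix}\right)$. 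Imposing $\det\gamma=1$ gives $a+d=-\epsilon 2^\alpha(ad-bc)$; eliminating $d$ one gets $(a+d)/2^\alpha=\epsilon(a^2+bc)(1+\epsilon 2^\alpha a)^{-1}$, so that $v_2(a+d)=\alpha+v_2(a^2+bc)$, together with $\Tr(\gamma)=2\epsilon+2^{2\alpha}(a^2+bc)(1+\epsilon 2^\alpha a)^{-1}$. Consequently, membership of $\gamma$ in $C_k$ resp. $D_k$ becomes a condition on the $2$-adic valuation of $a^2+bc$ and --- when that valuation is small --- on its residue class modulo a small power of $2$; and the refinement built into $A_\alpha$ (that $\gamma$ not be scalar mod $2^{\alpha+1}$) translates, after reducing $M$ mod $2$ and using $d\equiv-a\ \mathrm{mod}\ 2$, into ``$b$ or $c$ is odd''.

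It then remains to count the triples $(a,b,c)$ modulo a small power of $2$ obeying these congruences, multiply by the $2^{3\alpha}$ lifts to the full modulus and by $2$ for the free choice of sign $\epsilon$ (legitimate precisely when $\pm I_2$ are distinct mod $2^\alpha$, i.e.\ $\alpha\ge 2$), and divide by $|\SL(2,\Z/2^N\Z)|$. The required point counts on the quadrics $a^2+bc\equiv 0\ \mathrm{mod}\ 2^j$, or the number of triples with $v_2(a^2+bc)$ exactly $j$, are supplied by Lemma \ref{quadric} (whose statement covers $p=2$) and by the intermediate quantities $N_{2,m,j}$ in its proof. This produces the array entries for all but finitely many pairs $(\alpha,\beta)$. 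The remaining boundary cases $\alpha\in\{0,1,2\}$ or $\beta\in\{0,1,2\}$ --- where the set of scalar matrices mod $2^k$ degenerates (Lemma \ref{scalarp2}), the sign $\epsilon$ is not free when $\alpha\le 1$, and $C_j, D_j$ carry their special small-index definitions --- I would compute by hand by the same scheme, using $|\SL(2,\Z/2\Z)|=6$, $|\SL(2,\Z/4\Z)|=48$, $|\SL(2,\Z/8\Z)|=384$, $|\SL(2,\Z/16\Z)|=3072$, and so on.

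The hard part is purely the $p=2$ bookkeeping, on several fronts at once: the extra scalar matrices $\pm(1+2^{k-1})I_2$, which make $\{v_2(u_\gamma)\ge\alpha\}$ four residue classes rather than two; the fact (Lemma \ref{ufgammap2}) that $v_2(u_\gamma f_\gamma)$ is governed by $D\bmod 4$, hence by the \emph{unit part} of $a^2+bc$ and not just by its valuation; the failure of the $\pm$ sign to double counts when the ambient modulus is $\le 4$; and the sheer number of small-index transition cases. Once the parametrization above is in place, the ``generic'' computation (large $\alpha$ and $\beta$) is routine, and essentially all the work is concentrated in the transition cases, where the analysis is tedious but elementary. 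As a final consistency check one wants the array entries to sum to $1$ (equivalently, the $E_{2,\alpha,\beta}$ cover $\SL(2,\Z_2)$ up to $\mu_2$-measure zero), and the partial sums $\sum_{\beta}\mu_2(E_{2,0,\beta})=5/6$ and $\sum_{\alpha}\mu_2(E_{2,\alpha,0})=75/112$ to reproduce the values already invoked in the corollaries.
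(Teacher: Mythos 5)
Your plan is essentially the paper's own proof: the same decomposition $E_{2,\alpha,0}=(A_\alpha\cap C_\alpha)\sqcup B_\alpha$, $E_{2,\alpha,\beta}=A_\alpha\cap(C_{\alpha+\beta}\sqcup D_{\alpha+\beta})$, the same parametrization $\gamma=\epsilon(I_2+2^\alpha M)$ reducing everything to point counts on $a^2+bc\equiv\lambda\cdot(\cdots)$ via Lemma \ref{quadric} (with the unit part of $a^2+bc$ mod $4$ tracked to separate $C_k$ from $D_k$, and the ``$b$ or $c$ odd'' refinement for $A_\alpha$), and the same relegation of the finitely many cases $\alpha+\beta\le 2$ to a direct enumeration mod $64$ (which the paper does by computer rather than by hand). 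Apart from a harmless sign slip in your formula for $\Tr(\gamma)$, the outline matches the paper's argument.
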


 For the six cases $\alpha+\beta\leq 2$, these measures can be determined by a computer enumeration of the 196608 matrices in $\SL(2,\Z/64\Z)$, as by Lemma \ref{ufgammap2}, these clopen sets $E_{2,\alpha,\beta}$ are defined modulo $64$. This being done,
 it remains to show the formulas in the two cases $\alpha\geq 3, \beta=0$ and $\alpha+\beta\geq 3, \beta>0$. We first compute the number of matrices which correspond to subsets of $(A_\alpha\sqcup B_{\alpha+1})\cap C_{k}$ or $(A_\alpha\sqcup B_{\alpha+1})\cap D_{k+1}$:

\begin{lem} \label{boring_enumeration}
Let $p=2$, $k\geq \alpha \geq 0$, with $k\geq 2$. Let $\lambda \in \{1,2,3\}$. The number of matrices in $\SL(2,\Z/p^{2k+2}\Z)$ that satisfy the 3 conditions

$$\begin{cases}
 \gamma = \pm I_2 \; \mathrm{mod} \; p^\alpha, \\
 \gamma \neq \pm I_2 \; \mathrm{mod} \; p^{\alpha+1}, \\
\Tr(\gamma) = \pm ( 2+\lambda p^{2k} ) \; \mathrm{mod} \; p^{2k+2},
\end{cases}$$

is 
$$\begin{cases}
5p^{3k+3} &\; \mathrm{if} \; k=\alpha, \,\lambda=1\\
3p^{3k+3} &\; \mathrm{if} \; k=\alpha, \, \lambda=2 \, \mathrm{or}\; 3,\\
3p^{4k-\alpha+3}    &\; \mathrm{otherwise} 
\end{cases}$$
\end{lem}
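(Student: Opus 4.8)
\section*{Proof proposal}

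The approach is to parametrise the matrices explicitly, translate the three congruences into congruences on the entries of an auxiliary matrix, and reduce the count to a twisted version of the quadric count of Lemma~\ref{quadric}. A matrix $\gamma\in\SL(2,\Z/2^{2k+2}\Z)$ with $\gamma\equiv\pm I_2\bmod 2^\alpha$ can be written $\gamma=\epsilon I_2+2^\alpha N$ with $\epsilon\in\{\pm1\}$ and $N=\bigl(\begin{smallmatrix}a&b\\c&d\end{smallmatrix}\bigr)$ defined modulo $2^{2k+2-\alpha}$; for $\alpha\ge 2$ the sign $\epsilon$ is determined by $\gamma$, while for $\alpha\le 1$ it is redundant (these small values of $\alpha$ are handled by the same method with a little extra case splitting, which I suppress here). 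A short computation in the spirit of Lemmata~\ref{pn2separation}--\ref{p2separation} shows, for $\alpha\ge 3$: that $\gamma\equiv-\epsilon I_2\bmod 2^{\alpha+1}$ is impossible (the diagonal of $\gamma+\epsilon I_2$ has $2$-adic valuation exactly $1$), so the second condition is simply $N\not\equiv 0\bmod 2$; that in $\Tr(\gamma)\equiv\pm(2+\lambda 2^{2k})\bmod 2^{2k+2}$ the ambient sign must agree with $\epsilon$, the condition becoming $a+d\equiv\epsilon\lambda 2^{2k-\alpha}\bmod 2^{2k+2-\alpha}$; and that feeding this into $\det\gamma=1$ and dividing out powers of $2$ turns the determinant condition into the $\epsilon$-free congruence $ad-bc\equiv-\lambda 2^{2(k-\alpha)}\bmod 2^{2(k-\alpha)+2}$. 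Hence the count equals $2$ (the two choices of $\epsilon$) times the number of admissible $N$ for one fixed $\epsilon$.

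Having fixed $\epsilon$, I would eliminate a variable by setting $a=\epsilon\lambda 2^{2k-\alpha-1}+e$ and $d=\epsilon\lambda 2^{2k-\alpha-1}-e$ (legitimate since $2k-\alpha\ge k\ge 2$), which turns the determinant congruence into
$$e^2+bc\equiv R\bmod 2^{2(k-\alpha)+2},\qquad R:=\lambda^2 2^{4k-2\alpha-2}+\lambda 2^{2k-2\alpha},$$
where $R$ no longer involves $\epsilon$, satisfies $v_2(R)=2(k-\alpha)+v_2(\lambda)$, and has unit part $\equiv\pm1\bmod 4$ in a way depending only on $\lambda$. Since $e,b,c$ run over $\Z/2^{2k+2-\alpha}\Z$ while the congruence only sees them modulo $2^{2(k-\alpha)+2}$, the number of admissible triples equals $2^{3\alpha}$ times the number of $(X,Y,Z)\in(\Z/2^{2(k-\alpha)+2}\Z)^3$ with $X^2+YZ\equiv R$, from which one subtracts the contribution of the forbidden locus $N\equiv 0\bmod 2$, i.e.\ $e,b,c$ all even: writing $e=2e'$, $b=2b'$, $c=2c'$ (possible only when $4\mid R$) this correction is $2^{3\alpha+3}$ times the number of $(X,Y,Z)\in(\Z/2^{2(k-\alpha)}\Z)^3$ with $X^2+YZ\equiv R/4$ --- the same kind of count at one lower level. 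Multiplying the difference by $2$ gives the answer.

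The remaining ingredient is thus the count of solutions of $X^2+YZ\equiv R\bmod 2^n$ for $R$ of controlled valuation $r<n$: I would run exactly the elementary stratification used to prove Lemma~\ref{quadric} --- split on $v_2(Y)$, solve for $Z$, and telescope the even and odd subsums --- now also tracking $v_2(R)$ and, when $r$ is small, the residue $R\bmod 4$ (the count of $X^2+YZ\equiv R\bmod 4$ being $20$ for $R\equiv 0,1$ and $12$ for $R\equiv 2,3$). When $k>\alpha$ one has $r=2(k-\alpha)+v_2(\lambda)\ge 2$, the quadric count behaves uniformly, the ``$N\equiv 0\bmod 2$'' correction contributes at exactly the right order, and the pieces combine to $3\cdot 2^{4k-\alpha+3}$. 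When $k=\alpha$ one has $R\equiv\lambda\bmod 4$, so $R$ is a unit for $\lambda=1$, has valuation $1$ for $\lambda=2$, and is a unit $\equiv 3\bmod 4$ for $\lambda=3$; the forbidden locus is then empty, and the mod-$4$ counts $20$ (for $\lambda=1$) and $12$ (for $\lambda\in\{2,3\}$), multiplied by $2\cdot 2^{3k}$, give $5\cdot 2^{3k+3}$ and $3\cdot 2^{3k+3}$. I expect no conceptual difficulty here; the only real obstacle is the bookkeeping --- the interplay of $v_2(R)$, the parity of the relevant exponents, the small factor $\lambda\in\{1,2,3\}$, and the exceptional cases $\alpha\le 2$ --- so the cleanest write-up probably isolates the count of $X^2+YZ\equiv R\bmod 2^n$ as an auxiliary lemma and then specialises.
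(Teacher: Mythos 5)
Your proposal is correct and follows the same skeleton as the paper's proof: write $\gamma=\epsilon I_2+2^\alpha N$, use the trace congruence to eliminate one diagonal entry, turn $\det\gamma=1$ into a quadric congruence on the remaining three entries, and split into $k=\alpha$ (where the mod-$4$ counts $20$ and $12$ of $x^2+yz\equiv\lambda$, with not all of $x,y,z$ even, produce the $5$ and $3$) versus $k>\alpha$. The one genuine divergence is in the case $k>\alpha$: the paper keeps the equation in the inhomogeneous form $(1+ap^\alpha)\lambda p^{2(k-\alpha)}-a^2-bc\equiv 0$, reduces mod $p^{2(k-\alpha)}$ to the \emph{homogeneous} quadric of Lemma \ref{quadric}, and then lifts two more levels via a surjective linear form in the lift variables ($16$ solutions per base point); you instead symmetrise with $a=s/2+e$, $d=s/2-e$ to get the cleaner $e^2+bc\equiv R$ and propose to count this \emph{inhomogeneous} quadric $X^2+YZ\equiv R\bmod 2^n$ directly by the same stratification on $v_2(Y)$. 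Both work (I checked that your route does reproduce $3\cdot 2^{4k-\alpha+3}$, including the subtraction of the all-even locus); yours buys a tidier equation at the price of proving a new auxiliary count with its own bookkeeping in $v_2(R)$ and $R\bmod 4$, whereas the paper reuses Lemma \ref{quadric} verbatim plus elementary linear algebra. Two loose ends you should not leave implicit in a full write-up: (i) the suppressed cases $\alpha\le 1$, where the second condition $\gamma\neq\pm I_2\bmod 2^{\alpha+1}$ excludes more than $N\equiv 0\bmod 2$ (for $\alpha=1$ it also excludes $N\equiv I_2\bmod 2$, which happens to be killed automatically by the determinant congruence since then $a^2+bc$ is odd); and (ii) the final assembly in the $k>\alpha$ case, which you assert but do not carry out.
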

\begin{proof} For a matrix $\gamma$ satisfying the above condition, its trace is $\pm 2$ 
mod $p^{2k}$. Since $k\geq 2$, $-2\neq +2 \; \mathrm{mod} \; p^{2k}$ so the sign $\epsilon\in\{ \pm 1\}$ such that 
$$\Tr(\gamma)\equiv \epsilon 2 \; \mathrm{mod} \; p^{2k},$$
is defined without ambiguity. 
If $\alpha\geq 2$, the sign $\pm$ in the first condition:
$$ \gamma = \pm I_2 \; \mathrm{mod} \; p^\alpha,$$
is well-defined, and considering the trace mod $p^\alpha$, must agree with the above $\epsilon$. Otherwise, this sign is irrelevant and can be chosen to agree with the above $\epsilon$. So in all cases we are left to count, given a sign $\epsilon \in \{\pm1\}$, the number of matrices satisfying 
$$\begin{cases}
 \gamma = \epsilon I_2 \; \mathrm{mod} \; p^\alpha, \\
 \gamma \neq \epsilon I_2 \; \mathrm{mod} \; p^{\alpha+1}, \\
\Tr(\gamma) = \epsilon ( 2+\lambda p^{2k} ) \; \mathrm{mod} \; p^{2k+2},\\
\det(\gamma)=1
\end{cases}$$

The first condition may be written as the existence of $a,b,c,d$ in $\Z/p^{2k+2-\alpha}\Z$ such that
\begin{equation} \label{expgabcd2}
\gamma=\epsilon\left( I_2 +p^\alpha \begin{pmatrix}a & b \\ c & d \end{pmatrix} \right),
\end{equation}
and the second condition states that not all of them are divisible by $2$. By the third condition on the trace, 
$$d= \lambda p^{2k-\alpha} -a.$$
Since $k\geq \alpha$ and $k\geq 2$, by the above equation, $d$ and $a$ have the same parity; in particular the condition that not all $a,b,c,d$ are even is satisfied iff not all of $a,b,c$ are even.

 We wish to rewrite the condition that $\gamma$ has determinant $1$, as an equation on $a,b,c$, assuming that $d$ is given as above. 
$$ (1+a p^\alpha)(1+\lambda p^{2k}-ap^\alpha)-bc p^{2\alpha}=1 \; \mathrm{mod} \; p^{2k+2},$$
which is equivalent to
\begin{equation} \label{determinant}
 (1+a p^\alpha)\lambda p^{2(k-\alpha)}-a^2-bc=0 \; \mathrm{mod} \; p^{2(k-\alpha)+2}.
\end{equation}
\underline{First case:} $k=\alpha$, so here $\alpha\geq 2$. Then Equation (\ref{determinant}) is equivalent to:
$$\lambda-a^2-bc=0 \; \mathrm{mod} \;  4.$$
We can count (by computer, or by hand) the number of solution $(x,y,z) \in (\Z/4\Z)^3$ to this equation
$x^2+yz=\lambda \; \mathrm{mod} \;  4$ depending on $\lambda$, adding the condition that $x,y,z$ cannot all be even: there is  $20$ solutions if $\lambda=1$, and $12$ if $\lambda=2$ or $3$. 
However, $a,b,c$ are in fact elements of $\Z/p^{k+2}\Z$ whose reduction mod $4$ are a solution $(x,y,z)$, so there are $20p^{3k}$ or $12p^{3k}$ solutions $(a,b,c)$ depending on $\lambda=1$ or $2,3$, for a given sign $\epsilon$. All in all, this gives us $5p^{3k+3}$ or $3p^{3k+3}$ solutions.
\\

\noindent\underline{Second case:} $k>\alpha$. We put $\beta=k-\alpha>0$.
Then, by (\ref{determinant}), $(a,b,c)$ has to be congruent mod $p^{2\beta}$ to a solution, say $a_0,b_0,c_0$, of 
\begin{equation}\label{quadrictwo}
X^2+YZ=0 \; \mathrm{mod} \;  p^{2\beta},
\end{equation}
with the additional requirement that $a_0,b_0,c_0$  cannot be all even. With the help of Lemma \ref{quadric}, one can show that there are 
$p^3(p^{4\beta-4}+p^{4\beta-5}-p^{3\beta-4})$ solutions of (\ref{quadrictwo}) for which $a_0,b_0,c_0$ are all even, among the 
$p^{4\beta}+p^{4\beta-1}-p^{3\beta-1}$ general solutions of (\ref{quadrictwo}). This means that 
$(a_0,b_0,c_0) \in \Z/p^{2\beta}\Z$ are freely chosen in a set of cardinality
$$p^{4\beta-2}(p^2-1).$$
We may now define again auxiliary variables $(u,v,w)\in (\Z/4\Z)^3$ such that
$$a=a_0'+up^{2\beta}, \, b=b_0'+vp^{2\beta}, \, c=c_0'+wp^{2\beta},$$
with $a_0',b_0',c_0'$ being fixed representatives of $a_0,b_0,c_0$ in $\Z/p^{2\beta+2}\Z$.
By construction, there exists some $\nu \in \Z/4\Z$  such that $a_0'^2+b_0'c_0'=\nu p^{2\beta}$.

Equation (\ref{determinant}) can be rewritten as an equation on $u,v,w$:
$$\lambda (1+a_0' p^\alpha)p^{2\beta}=\nu p^{2\beta}+
p^{2\beta}(2a_0'u+b_0'w+c_0'v)
\; \mathrm{mod} \; p^{2\beta+2},$$
so is equivalent to 
\begin{equation}\label{determinantetoile}
\lambda (1+a_0' p^\alpha)-\nu=2a_0'u+b_0'w+c_0'v \; \mathrm{mod} \; p^{2},
\end{equation}
which is linear in $(u,v,w)$. However, inspecting again the equation $X^2+YZ=0$, we see that if $a_0$ is odd then $b_0$ and $c_0$ must also be odd. Since $a_0,b_0,c_0$ are not all even, at least one of the two numbers $b_0$ or $c_0$ must be odd, so the linear map $(\Z/4\Z)^3\to \Z/4\Z$, $(u,v,w)\mapsto 2a_0'u+b_0'w+c_0'v$ must be a surjective group morphism, in this case a 16-to-1 map. This means that the number of solutions $(u,v,w)$ of (\ref{determinantetoile}) is 16.
Thus, the number of solutions $(a,b,c)\in \Z/p^{2\beta+2}\Z$ of (\ref{determinant}) is 
$16*p^{4\beta-2}(p^2-1)=p^{4\beta+2}(p^2-1)$. However, $(a,b,c)$ are in fact chosen in $\Z/p^{k+\beta+2}\Z$, so we must multiply this number by $p^{3\alpha}$. Remembering that we also had to choose a sign $\epsilon$, we get that the number we seek is

$$3p^{4k-\alpha+3}.$$
\end{proof}

 The next two lemma establish the remaining cases of Proposition \ref{i_hate_two}.

\begin{lem} For $\alpha\geq 3$,
$$\mu_p(E_{2,\alpha,0})=\frac{11}{3}2^{-3\alpha}$$ 
\end{lem}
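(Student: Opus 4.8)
The plan is to use the decomposition $E_{2,\alpha,0}=(A_\alpha\cap C_\alpha)\sqcup B_\alpha$ obtained at the end of Section~\ref{defEp2}, so that $\mu_2(E_{2,\alpha,0})=\mu_2(A_\alpha\cap C_\alpha)+\mu_2(B_\alpha)$, and to compute the two pieces separately. For $B_\alpha$, I would note that it is the preimage, under the surjective reduction $\SL(2,\Z_2)\to\SL(2,\Z/2^\alpha\Z)$, of the two-element set $\{\pm(1+2^{\alpha-1})I_2\}$ (these two scalar matrices are distinct modulo $2^\alpha$ and have determinant $1$ there, both because $\alpha\geq 2$). Hence, by Lemma~\ref{cardinalsl2},
$$\mu_2(B_\alpha)=\frac{2}{|\SL(2,\Z/2^\alpha\Z)|}=\frac{2}{2^{3\alpha-2}(2^2-1)}=\frac{8}{3\cdot 2^{3\alpha}}.$$

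For $\mu_2(A_\alpha\cap C_\alpha)$, the idea is to first replace $A_\alpha$ by the larger clopen set
$$\tilde A_\alpha=\{\gamma\in\SL(2,\Z_2)\,:\,\gamma\equiv\pm I_2\ \mathrm{mod}\ 2^\alpha,\ \gamma\not\equiv\pm I_2\ \mathrm{mod}\ 2^{\alpha+1}\},$$
which is exactly the congruence pattern handled by Lemma~\ref{boring_enumeration}. By Lemma~\ref{scalarp2}, the only scalar residues modulo $2^{\alpha+1}$ that reduce to $\pm I_2$ modulo $2^\alpha$ are $\pm I_2$ and $\pm(1+2^\alpha)I_2$, so $\tilde A_\alpha=A_\alpha\sqcup B_{\alpha+1}$. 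I would then check that $B_{\alpha+1}\cap C_\alpha=\emptyset$: by Lemma~\ref{p2separation} applied with $k=\alpha+1$, every $\gamma\in B_{\alpha+1}$ has $\Tr(\gamma)\equiv\pm(2+2^{2\alpha})\ \mathrm{mod}\ 2^{2\alpha+2}$, a residue incompatible modulo $2^{2\alpha+2}$ with the classes $\pm(2+\lambda 2^{2\alpha})$, $\lambda\in\{2,3\}$, that define $C_\alpha$ (a one-line $2$-adic valuation comparison, valid since $\alpha\geq 3$). Consequently $A_\alpha\cap C_\alpha$ and $\tilde A_\alpha\cap C_\alpha$ have the same image in $\SL(2,\Z/2^{2\alpha+2}\Z)$.

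It then remains to count $\tilde A_\alpha\cap C_\alpha$ inside $\SL(2,\Z/2^{2\alpha+2}\Z)$. Since $C_\alpha$ is the disjoint union, over $\lambda\in\{2,3\}$, of the trace conditions $\Tr(\gamma)\equiv\pm(2+\lambda 2^{2\alpha})\ \mathrm{mod}\ 2^{2\alpha+2}$ (the four classes being pairwise distinct modulo $2^{2\alpha+2}$ for $\alpha\geq 3$), Lemma~\ref{boring_enumeration} with $p=2$ and $k=\alpha$ gives $3\cdot 2^{3\alpha+3}$ matrices for each value of $\lambda$, hence $|\tilde A_\alpha\cap C_\alpha|=2\cdot 3\cdot 2^{3\alpha+3}=3\cdot 2^{3\alpha+4}$ in $\SL(2,\Z/2^{2\alpha+2}\Z)$. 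Dividing by $|\SL(2,\Z/2^{2\alpha+2}\Z)|=2^{3(2\alpha+2)-2}(2^2-1)=3\cdot 2^{6\alpha+4}$ gives $\mu_2(A_\alpha\cap C_\alpha)=2^{-3\alpha}$, and therefore
$$\mu_2(E_{2,\alpha,0})=2^{-3\alpha}+\frac{8}{3\cdot 2^{3\alpha}}=\frac{11}{3}\,2^{-3\alpha}.$$
The only real obstacle is the bookkeeping: correctly relating $A_\alpha$ to the pattern $\tilde A_\alpha$ appearing in Lemma~\ref{boring_enumeration}, and carrying out the (easy to get wrong) congruence comparisons modulo $2^{2\alpha+2}$ showing that the relevant residue classes are pairwise distinct, respectively disjoint from the trace class of $B_{\alpha+1}$.
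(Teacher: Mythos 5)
Your proof is correct and follows essentially the same route as the paper: the same decomposition $E_{2,\alpha,0}=(A_\alpha\cap C_\alpha)\sqcup B_\alpha$, the same direct count of $B_\alpha$, and the same application of Lemma \ref{boring_enumeration} with $k=\alpha$, $\lambda\in\{2,3\}$ after observing that the $B_{\alpha+1}$ matrices are automatically excluded because their traces land in the $\lambda=1$ class. Your treatment of the bookkeeping (identifying $\tilde A_\alpha=A_\alpha\sqcup B_{\alpha+1}$ and checking $B_{\alpha+1}\cap C_\alpha=\emptyset$ via Lemma \ref{p2separation}) is in fact slightly more explicit than the paper's.
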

\begin{proof} Recall that for $\alpha\geq 3$, 
$$E_{2,\alpha,0}=(A_\alpha \cap C_\alpha)\sqcup B_\alpha.$$
The measure of $B_\alpha$ is straightforward to compute: among the $p^{3\alpha-2}(p^2-1)$ matrices of $\SL(2,\Z/p^\alpha\Z)$, we must count the two matrices $\pm(1+2^{\alpha-1})I_2$. So
$$\mu_2(B_\alpha)=\frac{1}{(p^2-1)p^{3\alpha-3}}.$$

 We are left to determine the measure of $A_\alpha\cap C_\alpha$; this is almost the previous lemma for $k=\alpha$ and $\lambda =2,3$, except we did not a priori excludes matrices that may be congruent to $\pm(1+2^{\alpha})I_2  \; \mathrm{mod} \; p^{\alpha+1}$. However, such matrices are in $B_{\alpha+1}\subset D_{\alpha+1}$ so never belong to $C_k$ for any $k$. Thus, summing the number of solution for $\lambda=2$ and $\lambda=3$, we get:
$$\mu_2(A_\alpha\cap C_\alpha)=\frac{3p^{3\alpha+3}+3p^{3\alpha+3}}{p^{6\alpha+4}(p^2-1)}
=\frac1{p^{3\alpha}}.$$
So we get
$$\mu_2(E_{2,\alpha,0})=\frac{11}{3p^{3\alpha}}.$$
\end{proof}
 
\begin{lem} For $\beta\geq 1$ and $\alpha+\beta \geq 3$,
$$\mu_2(E_{2,\alpha,\beta})=\frac{3}{p^{3\alpha+2\beta}}.$$
\end{lem}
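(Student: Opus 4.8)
The plan is to build on the decomposition recorded just above the statement: for $\beta>0$,
$$E_{2,\alpha,\beta}=A_\alpha\cap\bigl(C_{\alpha+\beta}\sqcup D_{\alpha+\beta}\bigr),$$
and, since the families $(C_k)_k$ and $(D_k)_k$ are pairwise disjoint (they partition $\SL(2,\Z_2)-\Tr^{-1}(\pm2)$), it suffices to show $\mu_2(A_\alpha\cap C_{\alpha+\beta})=p^{-(3\alpha+2\beta)}$ and $\mu_2(A_\alpha\cap D_{\alpha+\beta})=2\,p^{-(3\alpha+2\beta)}$, with $p=2$, and add. Both measures will be read off from Lemma~\ref{boring_enumeration}, normalising by Lemma~\ref{cardinalsl2}. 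The one bookkeeping point to keep in view is that the matrices Lemma~\ref{boring_enumeration} counts — those $\equiv\pm I_2$ mod $p^\alpha$ but not mod $p^{\alpha+1}$ — form $A_\alpha\sqcup B_{\alpha+1}$ rather than $A_\alpha$ (using Lemma~\ref{scalarp2}), so one must remove the part of $B_{\alpha+1}$ that meets the imposed trace condition.

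For the $C$-part, recall $C_{k}$ with $k:=\alpha+\beta\ge3$ is by definition the union over $\lambda\in\{2,3\}$ of the trace conditions $\Tr(\gamma)\equiv\pm(2+\lambda p^{2k})\bmod p^{2k+2}$. Since $\beta\ge1$ we have $k>\alpha$ and $k\ge2$, so Lemma~\ref{boring_enumeration} applies in its ``otherwise'' regime and contributes $3\,p^{4k-\alpha+3}$ matrices of $\SL(2,\Z/p^{2k+2}\Z)$ for each of $\lambda=2,3$. Moreover $B_{\alpha+1}\subset D_{\alpha+1}$ by Lemma~\ref{p2separation}, hence $B_{\alpha+1}$ meets no $C_k$, so these two counts add up to exactly $|A_\alpha\cap C_{\alpha+\beta}|=2\cdot3\,p^{4k-\alpha+3}$; dividing by $|\SL(2,\Z/p^{2k+2}\Z)|=p^{6k+4}(p^2-1)$ gives $p^{-2k-\alpha}=p^{-(3\alpha+2\beta)}$.

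For the $D$-part, $D_{\alpha+\beta}$ is precisely the trace condition ``$\lambda=1$ at level $k':=\alpha+\beta-1$''; here $k'\ge\alpha$ (as $\beta\ge1$) and $k'\ge2$ (as $\alpha+\beta\ge3$), so Lemma~\ref{boring_enumeration} again applies. If $\beta\ge2$, then $k'>\alpha$, the lemma is in the ``otherwise'' regime with count $3\,p^{4k'-\alpha+3}$, and since the $D_j$ are pairwise disjoint with $\alpha+\beta\ne\alpha+1$ the containment $B_{\alpha+1}\subset D_{\alpha+1}$ causes no overcount; dividing by $|\SL(2,\Z/p^{2(\alpha+\beta)}\Z)|$ yields $p^{-2(\alpha+\beta)-\alpha+1}=2\,p^{-(3\alpha+2\beta)}$. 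If $\beta=1$ (forcing $\alpha\ge2$), then $k'=\alpha$ and the lemma's ``$k=\alpha,\ \lambda=1$'' value $5\,p^{3\alpha+3}$ counts $(A_\alpha\sqcup B_{\alpha+1})\cap D_{\alpha+1}$; here $B_{\alpha+1}\subset D_{\alpha+1}$ genuinely contributes, so I subtract $|B_{\alpha+1}|=2\,p^{3\alpha+3}$ (two scalar classes mod $p^{\alpha+1}$, each with $|\SL(2,\Z/p^{2\alpha+2}\Z)|/|\SL(2,\Z/p^{\alpha+1}\Z)|=p^{3\alpha+3}$ lifts), leaving $3\,p^{3\alpha+3}$, and dividing by $|\SL(2,\Z/p^{2\alpha+2}\Z)|$ gives $p^{-3\alpha-1}=2\,p^{-(3\alpha+2)}$, again $2\,p^{-(3\alpha+2\beta)}$. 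Summing the $C$- and $D$-contributions gives $\mu_2(E_{2,\alpha,\beta})=3\,p^{-(3\alpha+2\beta)}$.

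The only genuinely delicate point — the one I would write out with care — is this reconciliation of Lemma~\ref{boring_enumeration}'s congruence hypothesis ``$\gamma\ne\pm I_2\bmod p^{\alpha+1}$'' with the stricter definition of $A_\alpha$, i.e. deciding exactly when the surplus coset $B_{\alpha+1}$ lies inside the prescribed trace set. Lemma~\ref{p2separation} settles it cleanly: $B_{\alpha+1}\subset D_{\alpha+1}$, which is disjoint from every $C_k$ and from $D_j$ for $j\ne\alpha+1$, so a correction is required in exactly one sub-case, $\beta=1$ of the $D$-part; everything else is elementary bookkeeping with the cardinalities of $\SL(2,\Z/2^n\Z)$.
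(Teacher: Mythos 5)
Your proposal is correct and follows essentially the same route as the paper: apply Lemma \ref{boring_enumeration} with $\lambda=2,3$ at level $k=\alpha+\beta$ for the $C$-part, with $\lambda=1$ at level $k=\alpha+\beta-1$ for the $D$-part, use $B_{\alpha+1}\subset D_{\alpha+1}$ to see that the surplus coset only interferes when $\beta=1$, and in that case subtract its contribution (the paper subtracts $\mu_2(B_{\alpha+1})$ computed in the preceding lemma, you count its $2p^{3\alpha+3}$ lifts directly — the same thing). All the arithmetic checks out.
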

\begin{proof}
 Lemma \ref{boring_enumeration} with $\lambda=2,3$ and $k=\alpha+\beta$ tells us how to compute the measure of
$(A_\alpha\sqcup B_{\alpha+1})\cap C_{\alpha+\beta}=A_\alpha \cap C_{\alpha+\beta}$.
Adding the cases $\lambda=2,3$, we get that for $\beta>0$,
$$\mu_2(A_\alpha \cap C_{\alpha+\beta})
=\frac{6p^{3\alpha+4\beta+3}}{p^{3(2\alpha+2\beta+2)-2}(p^2-1)}=\frac{1}{p^{3\alpha+2\beta}}.$$

 We now make a distinction between the cases $\beta>1$ and $\beta=1$. First assume that $\beta>1$.
  Lemma \ref{boring_enumeration} with $\lambda=1$, $k=\alpha+\beta-1$  gives us
$$\mu_2((A_\alpha\sqcup B_{\alpha+1})\cap D_{\alpha+\beta})
=\frac{3p^{3\alpha+4\beta-1}}{p^{3(2\alpha+2\beta)-2}(p^2-1)}=\frac{2}{p^{3\alpha+2\beta}},$$
where we used the last formula of Lemma \ref{boring_enumeration} since $k>\alpha$.
Also, since $\beta>1$, $B_{\alpha+1}\cap D_{\alpha+\beta}=\emptyset$ since $B_{\alpha+1}\subset D_{\alpha+1}$, so
$$\mu_2(E_{2,\alpha,\beta})= \mu_2(A_\alpha \cap C_{\alpha+\beta})+\mu_2(A_\alpha \cap D_{\alpha+\beta})=\frac{3}{p^{3\alpha+2\beta}},$$
as claimed.\\

We now treat the case where $\beta=1$. Then Lemma \ref{boring_enumeration} with $\lambda=1$, $k=\alpha$  gives us
$$\mu_2((A_\alpha\sqcup B_{\alpha+1})\cap D_{\alpha+1})
=\frac{5p^{3\alpha+3}}{p^{3(2\alpha+2)-2}(p^2-1)}=\frac{5}{p^{3\alpha+1}(p^2-1)},$$
Since $\beta=1$, we have
$$(A_\alpha\sqcup B_{\alpha+1})\cap D_{\alpha+1}=A_\alpha \cap D_{\alpha+1} \sqcup B_{\alpha+1},$$
so 
$$\mu_2(A_\alpha\cap D_{\alpha+1})=\frac{5}{3p^{3\alpha+1}}-\frac{1}{(p^2-1)p^{3\alpha}}=\frac{1}{p^{3\alpha+1}},$$
so using $E_{2,\alpha,1}=(A_\alpha\cap D_{\alpha+1})\sqcup (A_\alpha\cap C_{\alpha+1})$, the result is
$$\mu_2(E_{2,\alpha,1})=\frac{1}{p^{3\alpha+1}}+\frac{1}{p^{3\alpha+2}}=\frac{3}{p^{3\alpha+2\beta}}.$$
\end{proof}
 
 \section{Enumeration of geodesics by length}
\label{appendixB}
 Here we wish to discuss in more depth the relationship between picking randomly a periodic geodesic on the modular surface using the uniform measure on those of length $\leq L$, and picking a matrix at random using $\mathbb{P}_T$, which is likely hyperbolic and primitive, and then considering its conjugacy class in $\PSL(2,\Z)$, which then correspond to a random periodic geodesic. The point is that these two probabilities are, in some sense, absolutely continuous to each other.\\

 Let $E_\Gamma$ be a subset of $\Gamma$ that is a union of conjugacy classes of primitive hyperbolic elements, symmetric in the sense that if $\gamma \in E_\Gamma$, then $-\gamma \in E_\Gamma$. Then $E_\Gamma$ correspond to a subset $E_\mathcal{G}$ of the set of oriented, primitive periodic geodesic on the modular surface. Denote by $\mathcal{G}_L$ the subset of oriented, primitive periodic geodesic on the modular surface of length $\leq L$.

 \begin{thm} \label{negligiblelength}
 Let $E_\Gamma \subset \Gamma$, $E_\mathcal{G}$ be as above. Then
 $E_\Gamma$ has zero natural density if and only if $E_\mathcal{G}$ has zero natural density. More precisely,
 $$\left( \limsup_{T\to +\infty} \frac{|E_\Gamma \cap \Gamma_T|}{|\Gamma_T|}>0 \right) \Leftrightarrow  \left( \limsup_{L\to +\infty} \frac{|E_\mathcal{G} \cap \mathcal{G}_L|}{|\mathcal{G}_L|}>0 \right).$$
 \end{thm}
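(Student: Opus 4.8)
\emph{Reduction to a fibre count.} To a primitive hyperbolic class $c=[\pm\gamma]$ in $\PSL(2,\Z)$ is attached the oriented geodesic $g(c)$ of length $\ell(c)=2\log|\epsilon_\gamma|$; set $r_T(c)=\#\{\gamma'\in\Gamma_T:\ [\pm\gamma']=c\}$. Since $E_\Gamma$ is a $\pm$-symmetric union of such classes, $|E_\Gamma\cap\Gamma_T|=\sum_{c:\,g(c)\in E_\mathcal{G}}r_T(c)$, while $|E_\mathcal{G}\cap\mathcal{G}_L|=\#\{c:\ g(c)\in E_\mathcal{G},\ \ell(c)\le L\}$, so the statement is entirely about how the weights $r_T(c)$ are spread over the classes. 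The plan is to reformulate $r_T(c)$ geometrically: one has $\|\gamma'\|\le T\iff d(i,\gamma' i)\le 2\log T+O(1)$ in $\mathbb{H}^2$, and writing $\gamma'=h\gamma h^{-1}$, $x=h^{-1}i$, the displacement $d(x,\gamma x)$ depends only on $\ell(c)$ and on $\rho=d(x,\mathrm{axis}(\gamma))$ via $\cosh\tfrac{d(x,\gamma x)}{2}=\cosh\tfrac{\ell(c)}{2}\cosh\rho$. Hence $r_T(c)$ is, up to bounded multiplicative factors, the number of $\Gamma$-translates of $\mathrm{axis}(\gamma)$ meeting the ball $B_R(i)$, where $e^R\asymp T\,|\epsilon_\gamma|^{-1}$ (and $r_T(c)=0$ once $|\epsilon_\gamma|>T$, since $\|\gamma'\|$ is at least the spectral radius $|\epsilon_\gamma|$).

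\emph{The key estimate.} The heart of the proof is the following uniform bound: there are absolute constants $C_0,c,C>0$ with $r_T(c)\le C\,\ell(c)\,T\,|\epsilon_\gamma|^{-1}$ for all $c$ and $T$, and $r_T(c)\ge c\,\ell(c)\,T\,|\epsilon_\gamma|^{-1}$ whenever $|\epsilon_\gamma|\le T/C_0$. Both inequalities come from the reformulation above: the union of all $\Gamma$-translates of $\mathrm{axis}(\gamma)$ is exactly the preimage in $\mathbb{H}^2$ of the closed geodesic $g(c)$, a $\Gamma$-invariant union of geodesics carrying a $\Gamma$-invariant length measure of total mass $\ell(c)$ per fundamental domain; a Gauss-type lattice-point count in the tube $B_R(i)$ therefore yields $\asymp\ell(c)\,\mathrm{vol}(B_R(i))\asymp\ell(c)\,e^R$ translates, the only subtlety being that a closed geodesic stays at bounded height, so the portion of $B_R(i)$ lying deep in the cusp contains no translate — but that portion has volume $o(e^R)$ and does not affect the count. (Alternatively, in the elementary spirit of the paper, one can enumerate the class through the finite cycle of reduced binary quadratic forms of discriminant $\Tr(\gamma)^2-4$ and the orbits of these reduced matrices under $\bigl(\begin{smallmatrix}1&k\\0&1\end{smallmatrix}\bigr)$, and count divisors as in Lemma~\ref{bornetrace}.) The genuinely delicate point is the \emph{uniformity} of this estimate over all classes, including those whose closed geodesic penetrates far into a cusp, where the orbit $\Gamma i$ is sparse; this is the main obstacle, together with the bookkeeping of the $O(1)$ in $d(i,\gamma' i)=2\log\|\gamma'\|+O(1)$.

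\emph{First implication.} Granting the key estimate, both implications follow by summing over the unit ranges $\ell(c)\in(j,j{+}1]$, $j\in\N$, using $|\Gamma_T|\sim 6T^2$ and the prime geodesic theorem in the form $|\mathcal{G}_L|\asymp e^L/L$. Suppose $|E_\mathcal{G}\cap\mathcal{G}_L|=o(|\mathcal{G}_L|)$; given $\delta>0$, choose $L_0$ with $|E_\mathcal{G}\cap\mathcal{G}_L|\le\delta|\mathcal{G}_L|$ for $L\ge L_0$. The classes with $\ell(c)\le L_0$ form a fixed finite set, each contributing $\le C\,\ell(c)\,T\,|\epsilon_\gamma|^{-1}=O_{L_0}(T)$, hence $o(T^2)$ altogether. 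The classes with $L_0<\ell(c)\le 2\log T$ contribute at most $C\sum_{L_0<j\le 2\log T}j\,T\,e^{-j/2}\,|E_\mathcal{G}\cap\mathcal{G}_{j}|\le C\delta\,T\sum_{j\le 2\log T}j\,e^{-j/2}\,|\mathcal{G}_j|\asymp C\delta\,T\sum_{j\le 2\log T}e^{j/2}\asymp C\delta\,T^2$, the factor $j$ cancelling against $|\mathcal{G}_j|\asymp e^j/j$. Thus $\limsup_{T}|E_\Gamma\cap\Gamma_T|/|\Gamma_T|\le C'\delta$ for every $\delta>0$, so it is $0$.

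\emph{Converse.} Suppose $\limsup_L |E_\mathcal{G}\cap\mathcal{G}_L|/|\mathcal{G}_L|=\delta_0>0$. Fix $\beta$ with $e^{-\beta}<\delta_0/16$ and take a sequence $L\to\infty$ with $|E_\mathcal{G}\cap\mathcal{G}_L|\ge(\delta_0/2)|\mathcal{G}_L|$. Since $|\mathcal{G}_{L-\beta}|\le 2e^{-\beta}|\mathcal{G}_L|$ for $L$ large, at least $(\delta_0/4)|\mathcal{G}_L|$ of these classes have $\ell(c)\in(L-\beta,L]$. Put $T=C_0 e^{L/2}$; then $|\epsilon_\gamma|\le e^{L/2}=T/C_0$ for such $c$, so the lower bound applies and gives $r_T(c)\ge c\,\ell(c)\,T|\epsilon_\gamma|^{-1}\ge c\,\ell(c)\ge c(L-\beta)$. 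Hence $|E_\Gamma\cap\Gamma_T|\ge c(L-\beta)\cdot(\delta_0/4)|\mathcal{G}_L|\asymp c\,\delta_0\,e^L\asymp\delta_0|\Gamma_T|$ along this sequence, so $\limsup_T|E_\Gamma\cap\Gamma_T|/|\Gamma_T|>0$. Everything beyond the uniform estimate on $r_T(c)$ is soft counting.
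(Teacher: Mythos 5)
Your high-level strategy — reducing to a two-sided estimate on the fibre count $r_T(c)$, and then summing over length windows — is a reasonable reformulation, and your treatment of the $\Rightarrow$ direction (contrapositive, using only the upper bound on $r_T(c)$) is close in spirit to the paper's, which restricts to a thin length window $\ell_\gamma\in[2\log T-\beta,\,2\log T]$ to make the axes pass within bounded distance of $i$ and then counts conjugates inside a fixed disk. But there is a genuine gap in the converse, and you have in fact put your finger on it without resolving it: the claimed uniform lower bound $r_T(c)\ge c\,\ell(c)\,T|\epsilon_\gamma|^{-1}$ for \emph{all} classes with $|\epsilon_\gamma|\le T/C_0$ is false, not merely delicate. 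In your application $T/|\epsilon_\gamma|$ is bounded, so the tube around $\mathrm{axis}(\gamma)$ has bounded radius. A primitive closed geodesic of the modular surface can spend an arbitrarily large fraction of its length at height $\gg 1$ in the cusp, where $\Gamma i$ has no points at all (recall $\operatorname{Im}(\gamma i)=1/(c^2+d^2)\le 1$), and for such a geodesic the bounded tube meets far fewer than $\asymp\ell$ orbit points. Your parenthetical remark that ``the portion of $B_R(i)$ lying deep in the cusp contains no translate — but that portion has volume $o(e^R)$'' addresses the wrong object: the issue is not volume lost from the ball but length of the \emph{closed geodesic} lost to the cusp, which can be a fixed positive proportion of $\ell$ for a positive (if small) set of geodesics of each length.

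The paper resolves exactly this point by a different mechanism: it invokes equidistribution of periodic geodesics on $\Gamma\backslash\mathbb{H}^2$ to throw away the density-zero family of geodesics spending less than a third of their time in a fixed compact set $C$, and then proves the linear-in-$\ell$ lower bound on the number of lifts near $i$ only for the remaining geodesics (by choosing $\asymp\ell$ well-separated unit tangent vectors on the geodesic that lie in $C$, each of which lifts into a fixed compact $\tilde C\subset G$). No uniform estimate over all classes is claimed or needed. Your proof would become correct if you inserted this reduction — restrict $E_{\mathcal G}$ to geodesics spending a fixed proportion of time in a compact set, using equidistribution, before applying the lower bound — but as written the key lemma you rely on is not true, and the step ``$|\epsilon_\gamma|\le T/C_0$, so the lower bound applies'' does not go through.
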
 
  
 Thus properties of conjugacy classes which are true for almost all (resp. almost no) matrices of $\SL(2,\Z)$, ordered by norm, are also true for almost all (resp.  almost no) of the corresponding periodic geodesics, ordered by length. The rest of the section is devoted to  prove Theorem \ref{negligiblelength}.

 \subsection{Enumeration by norm and displacement}
 
 It is first worth a reminder that the enumeration by norm is nothing else than a enumeration by displacement. Indeed, consider the point $i\in \mathbb{H}^2$ in the upper half-plane model of the hyperbolic plane, then the Frobenius norm of $\gamma$ and the hyperbolic distance between $i$ and $\gamma i$ are related by (see for example \cite[3.1]{MR3434881})
 $$\| \gamma \|^2=2 \cosh d_{\mathbb{H}^2}(\gamma i,i),$$
 where $\left\| \left( \begin{array}{cc} a & b\\ c& d  \end{array}\right) \right\|=\sqrt{a^2+b^2+c^2+d^2}$ is the Frobenius norm. So if $T$ and $R$ are related by $T^2=2\cosh R$, then 
$$\Gamma_T=\left\{\gamma \in \Gamma \; : \; d_{\mathbb{H}^2}(\gamma i,i)\leq R\right\}.$$
 The following lemma is an exercise in hyperbolic geometry that follows for example from thinness of triangles:
 \begin{lem} Let $\gamma \in \Gamma$ be hyperbolic, and $\mathcal{A}_\gamma\subset  \mathbb{H}^2$ be the axis of the hyperbolic transformation $\gamma$ acting on $\mathbb{H}^2$. There is a constant $c>0$ such that 
$$2d_{\mathbb{H}^2}(i,\mathcal{A}_\gamma)+\ell_\gamma \leq d_{\mathbb{H}^2}(\gamma i,i) +c.$$
 \end{lem}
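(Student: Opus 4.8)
The plan is to reduce the statement to the exact formula for how far a hyperbolic isometry moves a point, and then estimate crudely. Write $r:=d_{\mathbb{H}^2}(i,\mathcal{A}_\gamma)$ and $\ell:=\ell_\gamma$. First I would conjugate into standard position: choosing $g\in\PSL(2,\R)$ with $g\mathcal{A}_\gamma$ equal to the imaginary half-line (and replacing $\gamma$ by $\gamma^{-1}$ if necessary), the element $g\gamma g^{-1}$ acts by $z\mapsto e^{\ell}z$, and since $g$ is an isometry, $r$, $\ell$ and $d_{\mathbb{H}^2}(\gamma i,i)$ are unchanged when we replace $\gamma$ by $g\gamma g^{-1}$ and $i$ by $gi$. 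Putting $p=x+iy:=gi$ and using $\sinh^2\!\big(d_{\mathbb{H}^2}(p,\text{imaginary axis})\big)=x^2/y^2$ together with $\cosh d_{\mathbb{H}^2}(z,w)=1+\tfrac{|z-w|^2}{2\,\mathrm{Im}(z)\,\mathrm{Im}(w)}$, a one-line computation gives
\[
\cosh d_{\mathbb{H}^2}(\gamma i,i)\;=\;1+(\cosh\ell-1)\cosh^2 r
\]
(equivalently, the summit formula for a Saccheri quadrilateral with base $\ell$ and legs $r$).

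Then I would estimate. Since $\cosh r\ge\tfrac12 e^{r}$ and $\cosh\ell-1=\tfrac12 e^{\ell}(1-e^{-\ell})^2$,
\[
\cosh d_{\mathbb{H}^2}(\gamma i,i)\;\ge\;(\cosh\ell-1)\cosh^2 r\;\ge\;\frac{(1-e^{-\ell})^2}{8}\,e^{2r+\ell}.
\]
For hyperbolic $\gamma\in\SL(2,\Z)$ one has $|\Tr\gamma|\ge 3$, hence $|\epsilon_\gamma|\ge\tfrac{3+\sqrt5}{2}$ and $\ell=\ell_\gamma\ge\ell_0:=2\log\tfrac{3+\sqrt5}{2}>0$, so $(1-e^{-\ell})^2\ge(1-e^{-\ell_0})^2>0$. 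Combining with $\cosh d_{\mathbb{H}^2}(\gamma i,i)\le e^{d_{\mathbb{H}^2}(\gamma i,i)}$ and taking logarithms,
\[
d_{\mathbb{H}^2}(\gamma i,i)\;\ge\;2r+\ell+\log\frac{(1-e^{-\ell_0})^2}{8},
\]
which is exactly the asserted inequality, with the explicit value $c=\log\!\big(8/(1-e^{-\ell_0})^2\big)>0$.

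As for difficulty, there is no serious obstacle here, only one point that needs care. One is tempted to get the bound "for free" from the triangle inequality along the staple-shaped path $i\to\pi_{\mathcal{A}_\gamma}(i)\to\gamma\pi_{\mathcal{A}_\gamma}(i)\to\gamma i$, of length $2r+\ell$; but that path is merely a uniform quasi-geodesic, so this route genuinely requires stability of quasi-geodesics in $\mathbb{H}^2$ (the $\delta$-thinness of triangles that the statement alludes to) to conclude, again up to an additive constant. I prefer the explicit displacement formula above, which is entirely elementary and makes the constant transparent. The only input specific to $\Gamma$ is the lower bound $\ell_\gamma\ge\ell_0$ on translation lengths — needed precisely so that $\cosh\ell_\gamma-1$ is comparable to $e^{\ell_\gamma}$ — and it follows at once from $|\Tr\gamma|\ge 3$.
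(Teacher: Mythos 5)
Your proof is correct, and it is more explicit than what the paper offers: the paper does not actually prove this lemma, dismissing it as ``an exercise in hyperbolic geometry that follows for example from thinness of triangles.'' The route the paper hints at is the soft one you correctly flag as needing care: the path $i\to\pi_{\mathcal{A}_\gamma}(i)\to\gamma\pi_{\mathcal{A}_\gamma}(i)\to\gamma i$ of length $2r+\ell_\gamma$ is only a uniform quasi-geodesic, and one must invoke $\delta$-hyperbolicity (stability of quasi-geodesics, or a thin-quadrilateral argument) to compare it with the geodesic from $i$ to $\gamma i$ up to an additive constant; that gives a non-explicit $c$. Your computation via the exact displacement formula $\cosh d_{\mathbb{H}^2}(\gamma i,i)=1+(\cosh\ell_\gamma-1)\cosh^2 r$ is elementary, checks out line by line (the conjugation to $z\mapsto e^{\ell}z$, the identity $\tfrac{(e^\ell-1)^2}{2e^\ell}=\cosh\ell-1$, and $1+x^2/y^2=\cosh^2 r$ are all right), and yields the explicit constant $c=\log\bigl(8/(1-e^{-\ell_0})^2\bigr)$. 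You also correctly isolate the one place where the hypothesis $\gamma\in\SL(2,\Z)$ is used: the uniform lower bound $\ell_\gamma\geq \ell_0=2\log\tfrac{3+\sqrt5}{2}$ coming from $|\Tr\gamma|\geq 3$, without which $\cosh\ell-1$ would not be comparable to $e^{\ell}$ and the constant would degenerate. (A quasi-geodesic argument would not need this lower bound, but for the application in the appendix either version suffices.) No gaps.
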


  We now prove the implication $\Rightarrow$. Let $E_\Gamma$ be a set of positive upper density, say $\epsilon$, for the enumeration by norm. The main point is to bound how many times we can pick different elements of $\Gamma_T$ corresponding to the same periodic geodesic, linearly in the length.

By the formula (\ref{distrib_length}), we can find a $\beta=\beta_\epsilon>0$ such that 
$$\limsup_{T\to +\infty} \mathbb{P}_T\left(\gamma \in E_\Gamma, \, 2\log(T) \geq \ell_\gamma \geq 2\log(T)-\beta \right)>\epsilon/2.$$

 Denote by $E'_T=\left\{\gamma \in E_\Gamma \, : \, 2\log T \geq \ell_\gamma \geq 2\log(T)-\beta \right\}$. For any $\gamma \in E'_T$, by the lemma,
$$d_{\mathbb{H}^2}(i,\mathcal{A}_\gamma)\leq \frac{1}2 \left(R(T)-2\log(T)+\beta+c \right),$$
where $R(T)=\cosh^{-1}(T^2/2)=2\log(T)+o(1)$. Thus there is a uniform bound $r=r_\epsilon>0$ on the distance between $i$ and the axis $\mathcal{A}_\gamma$ of $\gamma$ for all elements of $E'_T$. Pick now some $\gamma_0 \in E'_T$, and let $m_{\gamma_0}$ be the number of $\gamma \in E'_T$ which are conjugate to $\gamma$ or $-\gamma$.
 Consider the following picture: draw a disk of radius $r+1$ centered on $i$, and its intersection with the axis $\mathcal{A}_\gamma$ for all $\gamma \in E'_T$ conjugate to $\pm \gamma_0$. What we see is a family of $m_{\gamma_0}$ geodesic segments of length at least $2$. Let $N=N_\epsilon$ be the number of fundamental domains for $\Gamma$ intersecting the disk of radius $r+1$. Then if we project the picture to $\Gamma \backslash \mathbb{H}^2$, we see that
$$2 m_{\gamma_0} \leq N \ell_{\gamma_0}.$$
This is the required linear bound on $m_{\gamma_0}$ in terms of $\ell_{\gamma_0}$. Thus, if for some large $T>0$, $\mathbb{P}_T(E'_T)>\epsilon/4$, meaning there are at least 
$\epsilon T^2$ matrices in $E'_T$, there are representing in at least $\frac2N \epsilon T^2/(2\log(T))$ distinct conjugacy classes. Now put $L=2\log(T)$, using the well-known asymptotic
$$|\mathcal{G_L}|\sim \frac{e^L}L=\frac{T^2}{2\log(T)},$$
we conclude that $E_\mathcal{G}$ must have upper density at least $\frac2N \epsilon$.

 \subsection{Enumeration by length}
We study the converse implication. Assume that $E_\mathcal{G}$ has positive density.
This time, the point is that for most geodesics, one can construct a number of lift close to $i$ which is at least linear in the length. Choose a compact subset $C\subset \Gamma\backslash G$ of Haar measure bigger than $1/2$, and let $\tilde{C}$ be a compact set in $G$ surjecting onto $C$.
By equidistribution of periodic geodesics on the modular surface (see for example 
 \cite{roblin:hal-00104838}), the set of geodesics that spend a proportion of time $\leq 1/3$ in $C$ is of zero density. Thus the set of geodesics
in $E_\mathcal{G}$ that spend a proportion of time $> 1/3$ in $C$ is still of positive upper density. We will thus assume that all geodesics in $E_\mathcal{G}$ satisfy this condition.\\

 Let $M$ be the diameter of $\tilde{C}$. For such a geodesic $\lambda$, one can pick vectors $v_1,...,v_k$ on $\lambda$, with $v_i \in C$,  such that $v_i,v_j$ are image of each other by the geodesic flow of time at least $M+1$ for $i\neq j$. It is possible to do this with $k\geq \ell(\lambda)/3(M+1)$. Pick now lifts $\tilde{v}_i \in \tilde{C}$ of each $v_i$; the geodesics in $\mathbb{H}^2$ defined by the vectors $\tilde{v_i}$ are all lifts of $\lambda$, at bounded distance from the origin $i$. They are distinct since otherwise, $\tilde{v}_i$ and $\tilde{v}_j$ would be at distance $\leq M$ on the same geodesic, so one could flow from one to the other in time $<M$.\\
Thus, for each geodesic $\lambda$ of length $\ell$ that spend a proportion of time $\geq 1/3$ in $C$, one could construct $c.\ell$ different lifts at bounded distance from the point $i$, for some $c>0$. Now pick some large $L$ such that 
$$\frac{|E_\mathcal{G}\cap \mathcal{G}_L |}{|\mathcal{G}_L |}>\epsilon.$$
By exponential growth, there exists $\eta>0$ such that the set
$$E'_L=\{\lambda \in E_\mathcal{G}\cap \mathcal{G}_L \, : \, \ell_\lambda>\eta L\},$$
satisfies $|E'_L|>\epsilon |\mathcal{G}_L |/2\geq \epsilon e^L/3L$, and each of these geodesics has at least $c\eta L$ different lifts at bounded distance from $i$. For each of those $c\eta \epsilon e^L$ lift, there exists an hyperbolic transformation $\gamma \in \Gamma$ whose axis is precisely the lift. We have to show that they all lie in $\Gamma_T$ for some $T$ not far from $e ^{L/2}$. By the triangle inequality,
$$ d_{\mathbb{H}^2}(\gamma i,i)  \leq \ell_\gamma + 2d_{\mathbb{H}^2}(i,\mathcal{A}_\gamma)\leq L+O(1),$$ 
so we have $\gamma \in \Gamma_T$ with $T<Ce^{L/2}$, for some well-chosen $C>0$. 

\section*{Acknowledgement}
 This research was partially funded by the European Research Council (ERC GOAT 101053021).
\bibliography{biblio}{}
\bibliographystyle{plain}

\end{document}